\numberwithin{equation}{section}
\renewcommand{\a}{\alpha}
\renewcommand{\d}{\delta}
\newcommand{\f}{\varphi}
\newcommand{\m}{\mu}
\renewcommand{\l}{\lambda}
\newcommand{\s}{\sigma}
\newcommand{\p}{\pi}
\newcommand{\sets}[1]{\mathbb{#1}}
\renewcommand{\AA}{\sets{A}}
\newcommand{\GG}{\sets{G}}
\newcommand{\HH}{\sets{H}}
\newcommand{\MM}{\sets{M}}
\newcommand{\NN}{\sets{N}}
\newcommand{\RR}{\sets{R}}
\newcommand{\bH}{\mathbf{H}}
\newcommand{\nice}[1]{\mathcal{#1}}
\newcommand{\nA}{\nice{A}}
\newcommand{\nB}{\nice{B}}
\newcommand{\nC}{\nice{C}}
\newcommand{\nD}{\nice{D}}
\newcommand{\nF}{\nice{F}}
\newcommand{\nG}{\nice{G}}
\newcommand{\nH}{\nice{H}}
\newcommand{\nI}{\nice{I}}
\newcommand{\nJ}{\nice{J}}
\newcommand{\nN}{\nice{N}}
\newcommand{\nO}{\nice{O}}
\newcommand{\nP}{\nice{P}}
\newcommand{\nR}{\nice{R}}
\newcommand{\nS}{\nice{S}}
\newcommand{\nT}{\nice{T}}
\newcommand{\wt}[1]{\widetilde{#1}}
\newcommand{\arre}{\rightarrow}
\newcommand{\dd}{\mathrm{d}}
\newcommand{\as}{\text{  a.s.}}
\newcommand{\id}{\mathrm{id}}
\newcommand{\conj}{\overline}
\renewcommand{\r}{\varrho}
\newcommand{\cf}{\mathbbm{1}}
\newcommand{\cfa}[2]{\cf_{#1}^{\otimes #2}}
\newcommand{\ten}{\otimes}
\newcommand{\mm}{\mathbbm{m}}
\newtheorem{satz}{Satz}[section]
\newtheorem{ssatz}{Satz}
\newtheorem{thm}[satz]{Theorem}
\newtheorem*{thm*}{Theorem}
\newtheorem{tthm}[ssatz]{Theorem}
\newtheorem{prop}[satz]{Proposition}
\newtheorem{lemma}[satz]{Lemma}
\theoremstyle{definition}
\newtheorem{defin}[satz]{Definition}
\newtheorem{ex}[satz]{Example}
\newtheorem{eex}[ssatz]{Example}
\newtheorem{rem}[satz]{Remark}
\newtheorem*{rem*}{Remark}
\newtheorem{rrem}[ssatz]{Remark}
\newcommand{\sptext}[3]{\hspace{#1 em}\mbox{#2}\hspace{#3 em}}
\newcommand{\cF}{\mathcal{F}}
\newcommand{\cB}{\mathcal{B}}
\newcommand{\cG}{\mathcal{G}}
\newcommand{\cM}{\mathcal{M}}
\newcommand{\cC}{\mathcal{C}}
\newcommand{\cT}{\mathcal{T}}
\newcommand{\cI}{\mathcal{I}}
\newcommand{\cN}{\mathcal{N}}
\newcommand{\cR}{\mathcal{R}}
\newcommand{\E}{\mathbb E}
\newcommand{\R}{\mathbb R}
\newcommand{\N}{\mathbb N}
\newcommand{\G}{\mathbb G}
\renewcommand{\P}{\mathbb P}
\newcommand{\Z}{\mathbb Z}
\newcommand{\mpm}{\MM}
\newcommand{\mpmd}{\mpm^{\mathrm{dyad}}}
\newcommand{\equa}{\begin{eqnarray*}}
\newcommand{\tion}{\end{eqnarray*}}
\begin{document}

\title{Permutation Invariant Functionals of L\'evy Processes}

\author{F. Baumgartner}
\address{Department of Mathematics,
         University of Innsbruck,
         Technikerstra\ss e 19a,
         A-6020 Innsbruck,
         Austria}
\email{florian.baumgartner@uibk.ac.at}

\author{S. Geiss}
\address{Department of Mathematics and Statistics,
         University of Jyv\"askyl\"a, 
         P.O.Box 35 (MaD),
         FI-40014 University of Jyv\"askyl\"a,
         Finland}

\email{stefan.geiss@jyu.fi}

\subjclass[2010]{Primary 60G51,
                 Secondary 37A05, 20B99, 20Bxx, 22D40}
\date{}

\begin{abstract}
We study natural invariance properties of functionals defined on L\'evy processes and show
that they can be described by a simplified structure of the deterministic chaos kernels in It\^o's 
chaos expansion. These
structural properties of the kernels relate intrinsically to a measurability with respect to
invariant $\sigma$-algebras. This makes it possible to apply deterministic functions to 
invariant functionals on  L\'evy processes while keeping
the simplified structure of the kernels. This stability is crucial for applications. Examples are given as well.
\end{abstract}

\maketitle

\tableofcontents


\section*{Introduction}  

In recent years, It\^o's chaos expansion \cite{ito:56} for L\'evy processes was applied to investigate various problems in 
stochastic analysis and stochastic process theory. For example, it was used to investigate quantitative properties of stochastic processes in
continuous time or to prove covariance relations and inequalities, like the Poincar\'e inequality, 
for general Poisson processes, see 
\cite{last:penrose:11,geiss:geiss:laukkarinen:13,briand:labart:14,geiss:steinicke:13,cgeiss:labart:15}. 
Given a L\'evy process $X=(X_t)_{t\in [0,1]}$ 
and letting $L_2(\nF^X):=L_2(\Omega,\cF^X,\P)$, with $\cF^X$ being the completion of $\sigma(X_t: t\in [0,1])$, the chaos expansion 
is  an orthogonal decomposition
\[ L_2(\nF^X) = L_2-\bigoplus_{n=0}^\infty \nH_n, \]
where $F\in L_2(\nF^X)$ is decomposed into
\begin{equation}\label{eqn:chaos_expansion}
 F = \sum_{n=0}^\infty I_n(f_n).
\end{equation}
The functions $f_n \colon ((0,1]\times \R)^n\to \R$ are symmetric and belong to 
$$L_2^n = L_2( ((0,1]\times \R)^n,(\mathcal{B}((0,1])\otimes \mathcal{B}(\R))^{\otimes n}, \mm^{\otimes n}),$$
where $\mm$ is a $\sigma$-finite measure derived from the L\'evy measure $\nu$ of $X$, and in $f_n((t_1,x_1),\ldots,(t_n,x_n))$
the variables $t_1,\ldots,t_n$ represent the time and $x_1,\ldots,x_n$ the state space.
The expressions $I_n(f_n)$ are multiple integrals with respect to a random measure associated with the process 
$(X_t)_{t\in [0,1]}$.
At first glance, the chaos expansion is a perfect tool to describe $L_2$-random variables by
deterministic objects, the chaos kernels. In fact, various stochastic properties of $F$ transfer to or can seen by
means of the kernel functions $f_n$. For example, measurability with respect to $\cF_t^X$, the completion of $\sigma(X_s: s\in [0,t])$, can 
be checked by the support of the $f_n$. Malliavin differentiability or fractional Malliavin differentiability obtained by real interpolation 
can be formulated by moment conditions on the kernels \cite{geiss:geiss:laukkarinen:13}.
Another example can be found in the initial paper of It\^o  \cite{ito:56}, where the chaos expansion was introduced  and
used to investigate the spectral type of operators that are 
induced by a time shift of the underlying process (with the time domain $(-\infty,\infty)$). 
This is a first example to investigate L\'evy-Wiener type spaces by the structure of the chaos kernels in the 
chaos representation.
\smallskip

A general obstacle for the application of the chaos representation is the fact that the chaos kernels depend on an increasing
number of coordinates. As a result, their structure gets involved and computations become difficult or sometimes 
impossible although one can represent the kernel functions in certain 
      situations:
      using difference operators or Malliavin derivatives, kernel representations are obtained in  \cite{geiss:laukkarinen:11} and
       \cite{last:penrose:11} by iterated derivatives where differential properties of $F$ are needed  in the presence of the Brownian motion part
       (or see \cite{yip:stephens:olhede:10}, where powers of increments of the L\'evy process are considered).
An account on involved combinatorial aspects of chaos decompositions and applications, including 
multiplication formulas, can be found in \cite{peccati:taqqu:11}.
\smallskip

The aim of this paper is to restrict the chaos expansion \eqref{eqn:chaos_expansion} to $F\in \bH \subseteq L_2(\cF^X)$, where
$\bH$ is an appropriate closed linear subspace, and to make the expansion applicable in various situations while keeping essential properties 
of the chaos expansion. Applicable means that we reduce the complexity of the kernels by taking into account natural
invariance properties induced by permutation groups, so that the kernels can be handled even if the dimension of the chaos gets large,
in particular, an explicit computation of the kernels will not be needed.
The results are required in recent developments of stochastic analysis and stochastic process theory.
In Example \ref{ex:bsde} below we explain how our results were applied in \cite{geiss:steinicke:13} in the context of BSDEs.
Summarizing, we have two goals: firstly, we want to present results that are needed
in recent developments, secondly we continue the line of research from It\^o \cite{ito:56}.
\smallskip

To explain the invariance properties we have in mind we look at the three elementary examples
\equa
F_1&:= &\Phi_1 \left ( \int_{(0,1/2]}\varphi_t \dd X_t,
                   \int_{(1/2,1]}\varphi_{t-\frac{1}{2}} \dd X_t \right ), \\
F_2 &:= & \Phi_2 \big ([X]_{1/2},[X]_1-[X]_{1/2}\big ), \\
F_3 &:= &\int_0^1 \int_0^t h(t-s) \dd W_s \dd W_t.
\tion
Here, $\varphi\colon[0,1/2]\to \R$ is continuous,
$\Phi_1\colon\R^2\to \R$ symmetric, bounded and measurable,
$\Phi_2\colon\R^2\to \R$ bounded and measurable, but not necessarily symmetric, and
$h\colon[0,1]\to \R$ is bounded and measurable with the symmetry 
$h(1/2-r)=h(1/2+r)$ for $r\in [0,1/2]$. Moreover,
$W$ is the normalized Brownian part of $X$ and $[X]$ denotes the quadratic variation process of $X$,
see \cite[Section II.6]{protter:04}. The time variables of the corresponding kernels
appearing in the second summand of the It\^o chaos expansion of the random variables $F_1,F_2,F_3$ have
symmetries that correspond to the pictures below:
\begin{center}
\begin{tikzpicture}[scale=1.7]
\node at (0,-0.2) {0};
\node at (2,-0.2) {1};
\node at (-0.2,0) {0};
\node at (-0.2,2) {1};
\node at (1,-.5) {Example $F_1$};
\draw[thick] (0,0) rectangle (2,2);
\draw (0,0) to (2,2);
\draw (0,0) rectangle (1,1);
\draw (1,1) rectangle (2,2);
\draw (1,0) to (2,1);
\draw (0,1) to (1,2);
\node at (0.66,0.33){$A$};
\node at (1.66,1.33){$A'$};
\node at (0.33,0.66){};
\node at (1.33,1.66){};
\node at (0.66,1.33){$B'$};
\node at (1.66,0.33){$B$};
\node at (0.33,1.66){};
\node at (1.33,0.66){};
\node at (3,-0.2) {0};
\node at (5,-0.2) {1};
\node at (2.8,0) {0};
\node at (2.8,2) {1};
\node at (4,-.5) {Example $F_2$};
\filldraw[fill=red!20!white] (3,0) rectangle (4,1);
\filldraw[fill=blue!25!white] (4,0) rectangle (5,1);
\filldraw[fill=blue!25!white] (3,1) rectangle (4,2);
\filldraw[fill=green!20!white] (4,1) rectangle (5,2);
\node at (3.5,0.5){$C$};
\node at (3.5,1.5){$D^T$};
\node at (4.5,0.5){$D$};
\node at (4.5,1.5){$E$};
\draw[dashed,very thin,gray] (3,0) to (5,2);
\draw[thick] (3,0) rectangle (5,2);
\node at (6,-0.2) {0};
\node at (8,-0.2) {1};
\node at (5.8,0) {0};
\node at (5.8,2) {1};
\node at (7,-.5) {Example $F_3$};
\draw (6,0) to (8,2);
\foreach \x in {0,0.2,0.4,0.6,0.8,1,1.2,1.4,1.6,1.8,2} {
  \draw[very thin,gray] (6 cm+\x cm ,0 cm) to (8 cm,2 cm-\x cm);
  \draw[very thin,gray] (6 cm,\x cm ) to (8 cm - \x cm ,2 cm);
  }
\draw[thin,blue] (6.4,0) to (8,1.6);
\draw[thin,blue] (6,0.4) to (7.6,2);
\draw[thin,blue] (7.6,0) to (8,.4);
\draw[thin,blue] (6,1.6) to (6.4,2);
\draw[thin,red](7,0) to (8,1);
\draw[thin,red](6,1) to (7,2);
\draw[thick] (6,0) rectangle (8,2);
\end{tikzpicture}
\end{center}
In fact, there are two interacting symmetry groups: the general symmetry in 
$(t_1,x_1)$ and $(t_2,x_2)$, and the symmetries that come from $\Phi_1$, the 
bracket process $([X]_t)_{t\in [0,1]}$ and from $h$.
\smallskip

Example $F_1$ is invariant with respect to an interchange of the 
L\'evy process on $(0,\frac{1}{2}]$ with the process on $(\frac{1}{2},1]$ in the sense that $(X_t)_{t\in [0,1]}$ is replaced by
\[ Y_t:= \begin{cases}
                  X_{t + 1/2} - X_{1/2}   & t\in [0,1/2],\\
                (X_1-X_{1/2}) + X_{t-1/2} & t\in (1/2,1].
                  \end{cases}
          \]
Freezing the state variables $(x_1,x_2)$ of the kernel,
this leads to a symmetry in the time variables
$(t_1,t_2)$, where the areas $A'$ resp.\ $B'$ are copies of $A$ resp.\ $B$ obtained by a {\em shift}.
The remaining parts are determined by the symmetry in $(t_1,x_1)$ and  $(t_2,x_2)$.
\smallskip 

Example $F_2$:
Similarly as described above, the L\'evy process can be exchanged on intervals within $(0,\frac{1}{2}]$ resp.\ $(\frac{1}{2},1]$.
Later we show that this immediately results in the structure
\begin{multline*}
     f_2((t_1,x_1),(t_2,x_2)) \\
   = \cf_C(t_1,t_2) g_C(x_1,x_2) +
     \cf_E(t_1,t_2) g_E(x_1,x_2) +
     \cf_D(t_1,t_2)     g_D(x_1,x_2) +
     \cf_{D^T}(t_1,t_2) g_D(x_2,x_1),
\end{multline*}
where the functions $g_C$ and $g_E$ appearing in the diagonal terms are already symmetric.
\smallskip

Example $F_3$: As we only consider the Brownian motion, there is no
dependence of the kernel on the state variables $x_1$ and $x_2$. Directly, 
from the symmetries of $h$ one checks that the kernel is constant in time
along the lines, whereas on lines with the same color the kernel takes the same values.

In this article, symmetries of this kind are the basis to restrict the chaos expansion to a subspace $\bH$.
Let us list some desired abstract properties of this restricted chaos expansion and describe how the structure
of the paper is derived from their treatment:
\smallskip

\begin{enumerate}
\item [(S)] {\sc Stability:} Given random variables $F_1,\ldots,F_N\in \bH$ and an 
      appropriate bounded random functional $f\colon\Omega\times \R^N\to\R$, such that 
      $f(\cdot,x)\in \bH$ for all  $x\in\R^N$, we would like to guarantee that
      $f(F_1,\ldots,F_N)\in \bH$.
       
\item [(C)] {\sc Consistency:} We consider three different stages of compatibility of $\bH$ with the original chaos decomposition.

\begin{enumerate}[(C1)]
\item Are there closed linear subspaces $\bH_n \subseteq \nH_n$ such that
      \[ \bH = L_2-\bigoplus_{n=0}^\infty \bH_n? \]
\item Can the subspaces $\bH$ and $\bH_n$ be obtained by measurability, i.e.\ are there $\sigma$-algebras $\nA$ and $\nA_n$ such that 
      \[ \bH = L_2(\Omega,\nA,\P) \sptext{1}{and}{1} 
         \bH_n = I_n \Big ( L_2\big(((0,1]\times \R)^n,\nA_n,\mm^{\ten n}\big) \Big )? \] 
\item Can one realize $\nA_n=  \nA_1^{\ten n}$? 
\end{enumerate}
\item [(G)] {\sc Generating property of $\bH_1$}: Does one have that
    \[ \nA = \sigma (F\in \bH_1) \vee \{ A\in \cF^X: \P(A)=0\}? \] 
\end{enumerate}
\smallskip

Chaos expansions based on multiple integrals with respect to a centered independently scattered random measure 
 (also called a centered completely random measure) are usually proved under the condition that the 
control measure is non-atomic (see \cite[Chapter 5.1]{peccati:taqqu:11}). 
Starting with a non-atomic control measure, in Theorem \ref{thm:locally_erogdic_introduction} below 
we also obtain chaos decompositions with control measures that are \emph{not} non-atomic, but
sharing the desirable basic properties (S), (C3), and (G) from above. This could open a way to transfer
properties and results from the non-atomic case to the atomic one.
\smallskip

Before we proceed let us make some detailed comments on the above set of conditions:

\begin{rrem}
\begin{enumerate}
\item Roughly speaking, property (C2) is stronger than the stability property (S): 
      If the map $\omega \mapsto f(\omega,F_1(\omega),\ldots,F_N(\omega))$
      can be defined in  a reasonable way, then the measurability will transfer 
      automatically to the composition and implies $f(F_1,\ldots,F_N)\in \bH$ by (C2).
\item The stability (S) excludes 
      certain choices of $\bH$ such as $\bH=\nH_n$ for some $n\ge 1$.
\item The generating property (G) holds for It\^o's chaos expansion as introduced above, and might be approached 
      by orthogonal polynomials associated to certain L\'evy processes (cf. \cite{nualart:schoutens:00,sole:utzet:08,privault:09}) 
      in order to obtain other cases. For example, it holds for the Hermite expansion of the 
      Gaussian space $(\R^n,\mathcal{B}(\R^n),\gamma_n)$ with $\gamma_n$ being the 
      standard Gaussian measure on $\R^n$, and for functionals $f(N_1)$, where 
      $(N_t)_{t\in [0,1]}$ is a standard Poisson process by exploiting 
      Charlier polynomials \cite[Chapter 6]{privault:09}.
\item In general, condition (C2) does not imply (C3) nor (G): take for $\bH'$ the space 
      of random variables $F$ that are invariant with respect to all dyadic permutations 
      of the underlying L\'evy process, and for $\bH{''}$ the $F$ that are invariant with respect to all 
      dyadic periodic shifts of the underlying L\'evy process.
      We have $\bH' \subseteq \bH{''}$ and in Section \ref{subsec:negative_example} we provide an example 
      that $\bH' \subsetneq \bH{''}$. Because $g\in L_2((0,1])$ is a.s.\ constant if and only if $g$ is a.s.\ invariant with 
      respect to all periodic dyadic shifts, in both cases the first chaos  coincides and equals
      \[ \bH_1'=\bH_1{\hspace*{-.4em}  ''}=\{ I_1(\cf_{(0,1]}g_1) :  \cf_{(0,1]} g_1 \in L_2((0,1]\times \R,\mm) \}, \]
      where $(\cf_{(0,1]} g_1)(t_1,x_1) = g_1(x_1)$.
      Using Theorem \ref{thm:locally_erogdic_introduction}(2) below for $L=1$ and $E_1=(0,1]$ gives properties (C3) and (G) 
      for $\bH'$, so that (C3) and (G) cannot hold for $\bH{''}$ as $\bH' \subsetneq \bH{''}$. This also means, although $g\in L_2((0,1])$ 
      is a.s.\ constant whenever $g$ is invariant with respect to all shifts, this phenomenon does not transfer to It\^o's chaos representation. 
\end{enumerate}

\end{rrem}
\smallskip
Let us explain the structure of the paper along the above listed conditions.
The answer to the problems of consistency (C1) and (C2) (and therefore the problem of stability (S))
is given in Section \ref{sec:invariances_levy_processes} by the following statement, which is part of 
Theorem \ref{thm:main2} and Lemma \ref{lem:equiinv}
(for notation see sections \ref{sec:prelimninaries_levy_processes} -- \ref{sec:invariances_levy_processes}). 

\begin{tthm}\label{thm:equivalences_introduction}
For a group $\GG$ of dyadic measure preserving maps 
\footnote{The concept of a dyadic measure preserving map $g$ is defined in 
Definition \ref{definition:dyadic_g} below.} $g\colon(0,1]\to (0,1]$
and $F\in L_2(\nF^X)$
the following assertions are equivalent:
\begin{enumerate}
\item $F$ is invariant with respect to all $\G$-induced permutations of the underlying 
      L\'evy process $X$.
\item $F\in L_2(\Omega,\nH_\GG,\P)$, where
      \[
          \nH_{\G}:=\sigma \Big (I_n(f_n): f_n \mbox{ symmetric},  f_n = f_n\circ g[n] \mbox{ a.e.},  
                           g\in \G,  n\ge 1 \Big ) 
          \vee \{ A \in \cF^X : \P(A)=0 \} \]
      with $g[n]((t_1,x_1),\ldots,(t_n,x_n)):=((g(t_1),x_1),\ldots,(g(t_n),x_n))$.
\item $F$ has a chaos expansion with symmetric kernels 
      $f_n\in L_2(((0,1]\times\RR)^n,\nI(\G[n]),\mm^{\ten n})$, where 
      $\nI(\G[n])$ is the invariant $\sigma$-algebra of the diagonal group $\G[n]$ on
      $((0,1]\times \R)^n$ induced by $\G$.
\end{enumerate}
\end{tthm}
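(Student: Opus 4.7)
The natural strategy is a cyclic argument (3)$\Rightarrow$(2)$\Rightarrow$(1)$\Rightarrow$(3), the entire argument being driven by a single transformation law for the chaos expansion under $\G$-induced permutations. Specifically, if $T_g$ denotes the unitary operator on $L_2(\nF^X)$ induced by a $\G$-permutation of $X$ associated with $g\in\G$, then
\[
   T_g \, I_n(f_n) \;=\; I_n(f_n\circ g[n]) \qquad \text{for every symmetric } f_n \in L_2^n.
\]
This identity will either be quoted from the earlier preparatory material of Section \ref{sec:invariances_levy_processes} or verified quickly by first checking it on symmetric simple tensors $\cf_{B_1\times\cdots\times B_n}$ (using that $g$ is dyadic and measure preserving, so that $g[n]$ preserves $\mm^{\ten n}$ and hence the $L_2^n$-isometry of $I_n$ translates into the claimed intertwining relation) and then extending by the density of simple tensors and continuity.

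Given this key lemma, the implication (3)$\Rightarrow$(2) is immediate: if all kernels $f_n$ are symmetric and $\nI(\G[n])$-measurable, then $f_n\circ g[n]=f_n$ pointwise (on an invariant representative), so each $I_n(f_n)$ belongs to the generating family of $\nH_\GG$, and hence $F=\sum_n I_n(f_n)\in L_2(\Omega,\nH_\GG,\P)$. For (2)$\Rightarrow$(1), I would let $\nJ$ denote the $\P$-completion of the $\sigma$-algebra of $\G$-invariant events in $\nF^X$. The generating random variables of $\nH_\GG$ satisfy $T_g I_n(f_n)=I_n(f_n\circ g[n])=I_n(f_n)$, hence are $\nJ$-measurable; since $\nJ$ is a complete sub-$\sigma$-algebra, $\nH_\GG\subseteq\nJ$, and therefore every $F\in L_2(\Omega,\nH_\GG,\P)$ is $\G$-invariant.

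The substantive direction is (1)$\Rightarrow$(3). Starting from $T_g F = F$ for every $g\in\G$, I would expand $F=\sum_n I_n(f_n)$ with symmetric kernels $f_n$, apply the transformation law and use the orthogonality of the chaos subspaces together with the $L_2^n$-isometry of $I_n$ on symmetric kernels to conclude, for every $n$ and every $g\in\G$, the identity $f_n\circ g[n]=f_n$ in $L_2^n$ (equivalently $\mm^{\ten n}$-a.e.). The remaining task is to upgrade this a.e.\ invariance to $\nI(\G[n])$-measurability of some version of $f_n$. This is the step I expect to be the main obstacle: the group $\G$ may a priori be uncountable, and a.e.\ invariance under every $g\in\G$ does not automatically yield a pointwise $\G[n]$-invariant representative. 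The plan is to exploit the fact that $\G$ consists of \emph{dyadic} measure preserving maps: the orbits of $\G[n]$ on the dyadic partition of $((0,1]\times\RR)^n$ are governed by a countable collection of generators, so that by handling this countable skeleton one produces a $\G[n]$-invariant pointwise representative of $f_n$, which is then $\nI(\G[n])$-measurable by definition. This is presumably what Lemma \ref{lem:equiinv} delivers; once applied, the cycle closes.

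Main obstacle: the passage from a.e.\ invariance of kernels to genuine $\nI(\G[n])$-measurability of a suitable representative, which relies crucially on the dyadic/countable nature of $\G$ and is the place where the invariant $\sigma$-algebra viewpoint is doing nontrivial work over the purely a.e.\ viewpoint.
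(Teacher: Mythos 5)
Your overall architecture is the same as the paper's: the intertwining relation $T_g I_n(f_n)=I_n(f_n\circ g^{-1}[n])$ (Theorem \ref{thm:SeqT}; your version with $g$ in place of $g^{-1}$ is harmless since $\G$ is a group), the uniqueness of symmetric kernels to get $f_n=f_n\circ g[n]$ a.e.\ from (1), and the upgrade from a.e.\ invariance to an everywhere $\G[n]$-invariant, hence $\nI(\G[n])$-measurable, representative. That last step is exactly what the paper's Lemma \ref{lemma:invariant_with_measure} (combined with Lemma \ref{lem:equiinv} and a re-symmetrization) delivers, and your appeal to countability is sound --- in fact $\mpmd$ is countable by construction, so every subgroup $\G$ is automatically countable and the ``a priori uncountable'' case you worry about does not arise.

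The genuine gap is in (2)$\Rightarrow$(1). You assert that the generators $I_n(f_n)$ of $\nH_\G$, being invariant random variables, are ``hence measurable'' with respect to your $\sigma$-algebra $\nJ$ of $\G$-invariant events, and conversely that every element of $L_2(\Omega,\nJ,\P)$ is $\G$-invariant. Both inferences silently use that $T_g$ commutes with composition by Borel functions, i.e.\ $T_g\,\f(G_1,\ldots,G_N)=\f(T_gG_1,\ldots,T_gG_N)$ a.s.: the first because $T_gG=G$ does not by itself make the sublevel sets of $G$ invariant events (one needs $T_g\cf_{\{G\le c\}}=\cf_{\{G\le c\}}$), and the second because showing that the invariant events form a $\sigma$-algebra and passing through simple functions requires $T_g(\cf_A\cf_B)=(T_g\cf_A)(T_g\cf_B)$. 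None of this is automatic, since $T_g$ is \emph{not} given as composition with a point transformation of $\Omega$; it is only an $L_2$-isometry defined on the dense class $\nH^X$ and extended by continuity. Establishing this commutation is the technical core of the paper's proof of (2)$\Rightarrow$(1): Lemma \ref{lemma:composition_and_Tg} proves it for continuous $\f$ via a.s.\ convergent approximations from $\nH^X$, Lemma \ref{lemma:borelstable} upgrades it to Borel $\f$ by truncation, simple functions and outer regularity, and the paper then combines this with martingale convergence and Doob's factorization to conclude $\cf_A\in\bH_\G$ for every $A\in\nH_\G$. Your argument needs this lemma (or an equivalent) inserted at the quoted step; as written, the implication does not follow.
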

\smallskip
That means that we have an orthogonal decomposition
\[ L_2(\Omega,\nH_\GG,\P) = L_2-\bigoplus_{n=0}^\infty I_n \Big ( L_2(((0,1]\times\RR)^n,\nI(\G[n]),\mm^{\ten n}) \Big )
\]
where for $n=0$ we take the almost surely constant random variables.
The examples $F_1$, $F_2$, and $F_3$ from the beginning fit into this theorem. In particular,
for the case of shift invariant functionals, which corresponds to the setting in \cite{ito:56}, we get from 
Theorem \ref{thm:equivalences_introduction}:

\begin{eex}
We call $g\colon(0,1]\to (0,1]$ a dyadic periodic shift if there is some 
 integer $d\ge 1$ such that 
\[ g(t) = s_d(t) := \begin{cases}
          t + \frac{1}{2^d}      & : t\in \left (  \frac{k-1}{2^d},\frac{k}{2^d} \right ] \mbox{ and } 1 \le  k < 2^d \\
          t + \frac{1}{2^d} - 1  & : t\in \left (  \frac{2^d-1}{2^d},1 \right ] \\
         \end{cases}.
         \]
A functional $F\in L_2(\cF^X)$ is invariant with respect to all dyadic periodic shifts if and only if 
$F$ is measurable with respect to
\[
          \nH_{\rm shift}:=\sigma \Big (I_n(f_n): f_n \mbox{ symmetric and }  
          f_n = f_n\circ s_d[n]
         ,\,\,
         d,n\ge 1 \Big )
          \vee \{ A \in \cF^X : \P(A)=0 \}, \]
where $s_d[n]$ is introduced in \eqref{eqn:definition:g[n]} below.
\end{eex}
\smallskip

To handle conditions (C3) and (G) we introduce the concept of a {\em locally ergodic set} 
in Definition \ref{definition:locallyergodic_new} below which yields a stronger invariance than 
for instance shift invariance. 
For the following, we let $\nO((0,1])$ be the system of all unions of half-open dyadic intervals (including the empty set). For 
pairwise disjoint and non-empty $E_1,\ldots,E_L\in \nO((0,1])$, we let 
\[ \mathcal{B}((0,1])_E := \mathcal{B} \Big ((0,1]\setminus (E_1 \cup \cdots \cup E_L) \Big ) \vee \sigma (E_1,\ldots,E_L). \]
As part of Theorem \ref{thm:locally_ergodic} below we prove 

\begin{tthm}\label{thm:locally_erogdic_introduction}
Let $E_1,\ldots,E_L\in \nO((0,1])$ be pairwise disjoint and non-empty, and let $F\in L_2(\cF^X)$.
\begin{enumerate}
\item Let $\GG$ be a group of dyadic permutations of $(0,1]$, let
      $E_1,\ldots,E_L$ be {\em locally ergodic} with respect to $\GG$ and let
      the random variable $F$ be invariant with respect to all permutations of
      the underlying L\'evy process $X$ induced by $\GG$. Then there is a representation 
      $F=\sum_{n=0}^\infty I_n(f_n)$ with symmetric kernels 
      $f_n \colon ((0,1]\times \R)^n\to \R$ that are 
      $(\mathcal{B}((0,1])_E \otimes \mathcal{B}(\R))^{\otimes n}$-measurable.
\item The following assertions are equivalent:
    \begin{enumerate}
    \item The random variable $F$ is measurable with respect to
          \[ \sigma \Big (I_1(f_1) : f_1\in L_2^1 \mbox{ is } \mathcal{B}((0,1])_E \otimes \mathcal{B}(\R) \mbox{ measurable}\Big )
             \vee \{ A \in \cF : \P(A)=0 \}. \]
    \item There are symmetric $(\mathcal{B}((0,1])_E \otimes \mathcal{B}(\R))^{\otimes n}$-measurable $f_n\in L_2^n$ with 
          $F = \sum_{n=0}^\infty I_n(f_n)$.
    \item The random variable $F$ is invariant with respect to  all permutations of the underlying L\'evy process induced by
          the groups $\mpmd_{E_1},\dots,\mpmd_{E_L}$, where $\mpmd_{E_l}$ consists of all dyadic permutations that leave
          $E_l^c$ invariant.
    \end{enumerate}
\end{enumerate}
\end{tthm}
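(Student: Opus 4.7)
The plan is to reduce part~(1) to Theorem~\ref{thm:equivalences_introduction} combined with an analysis of the invariant $\sigma$-algebra $\nI(\GG[n])$ under local ergodicity, and then to derive part~(2) through the chain of implications (c)$\Rightarrow$(b)$\Rightarrow$(a)$\Rightarrow$(c).

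For part~(1), I would start from the $\GG$-invariance of $F$ and apply Theorem~\ref{thm:equivalences_introduction}(3) to obtain symmetric $\nI(\GG[n])$-measurable kernels $f_n$. I would then partition $((0,1]\times\R)^n$ according to the multi-index $(l_1,\ldots,l_n)\in\{0,1,\ldots,L\}^n$ recording in which $E_{l_i}$ (or, for $l_i=0$, in $(0,1]\setminus\bigcup_l E_l$) each time variable $t_i$ lies. On such a piece, with the state variables and the time variables outside $\bigcup_l E_l$ frozen, local ergodicity combined with the diagonal $\GG$-invariance and the symmetry of $f_n$ forces $f_n$ to be a.s.\ constant in each remaining time variable. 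This is exactly $(\mathcal{B}((0,1])_E\otimes\mathcal{B}(\R))^{\otimes n}$-measurability, and an iterated Fubini argument assembles the pieces into a globally measurable representative.

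For part~(2), the implication (c)$\Rightarrow$(b) follows from part~(1) applied to the group $\GG$ generated by $\mpmd_{E_1},\ldots,\mpmd_{E_L}$; I first verify that $E_1,\ldots,E_L$ are locally ergodic for this $\GG$, which is short because $\mpmd_{E_l}$ already contains every dyadic permutation of $E_l$ and such groups act ergodically on elements of $\nO((0,1])$. The implication (a)$\Rightarrow$(c) is direct: any $f_1\in L_2^1$ of the prescribed measurability is constant in time on each $E_l$, so $f_1\circ g[1]=f_1$ a.e.\ for every $g\in\mpmd_{E_l}$, and consequently $I_1(f_1)$ is invariant under the corresponding induced permutation of $X$. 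The $\sigma$-algebra in (a) therefore consists of invariant functionals, so measurability of $F$ transfers the invariance.

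The main obstacle is (b)$\Rightarrow$(a), which is essentially the generating property (G) restricted to the present subclass of kernels. The strategy is to approximate every symmetric $(\mathcal{B}((0,1])_E\otimes\mathcal{B}(\R))^{\otimes n}$-measurable kernel $f_n$ in $L_2^n$ by finite linear combinations of symmetrized tensor products $\widetilde{\cf_{A_1}\otimes\cdots\otimes \cf_{A_n}}$ with pairwise disjoint sets $A_i\in\mathcal{B}((0,1])_E\otimes\mathcal{B}(\R)$; on such elementary kernels the multiple integral reduces, up to a symmetrization constant, to the product $\prod_{i=1}^n I_1(\cf_{A_i})$, which is manifestly measurable with respect to the $\sigma$-algebra in (a). Taking $L_2$-limits and summing over $n$ completes the argument. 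The delicate point is guaranteeing sufficiently many pairwise disjoint sets of the prescribed type to span the target $L_2^n$-space; here I would exploit that the state variable contributes a genuine non-atomic factor via $\mathcal{B}(\R)$, which allows the needed disjoint splitting even when the control measure $\mm$ behaves atomically in the time coordinate.
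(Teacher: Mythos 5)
Your reduction of part~(1) to Theorem~1(3) plus an analysis of $\nI(\GG[n])$ is the paper's route (Theorem~\ref{thm:product_of_locally_ergodic_sets_new} combined with Theorem~\ref{thm:main2}), and your steps (c)$\Rightarrow$(b) and (a)$\Rightarrow$(c) in part~(2) match the paper's (4)$\Rightarrow$(3) and (5)$\Rightarrow$(4). Two caveats, one minor and one serious.

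Minor: in part~(1), ``freeze the other variables, local ergodicity forces a.s.\ constancy in each remaining time variable, then Fubini'' skips the two points that carry the actual work in the paper's proof of Theorem~\ref{thm:product_of_locally_ergodic_sets_new}: the definition of local ergodicity only applies to unions $T_{N,l}\cup T_{N,m}$ with $l\neq m$, so the argument says nothing on the diagonal cuboids, which must be shown to be asymptotically negligible (Lemma~\ref{lemma:small_diagonal_new}); and a.s.\ constancy of sections does not by itself produce a jointly $(\mathcal{B}((0,1])_E\otimes\mathcal{B}(\R))^{\otimes n}$-measurable representative --- the paper assembles this via a martingale of cuboid averages. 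This is repairable along the lines you indicate, but it is not a Fubini one-liner.

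Serious: your (b)$\Rightarrow$(a) does not work as stated. You propose to approximate every symmetric $(\mathcal{B}((0,1])_E\otimes\mathcal{B}(\R))^{\otimes n}$-measurable kernel by linear combinations of $\cf_{A_1}\otimes\cdots\otimes\cf_{A_n}$ with \emph{pairwise disjoint} $A_i$ of the prescribed measurability, so that $I_n$ becomes a product of $I_1$'s. Such a diagonal-free approximation requires the restriction of $\mm$ to the relevant $\sigma$-algebra to be non-atomic, and here it is atomic by design: each $E_l$ is an atom of $\mathcal{B}((0,1])_E$ in the time direction, and --- contrary to your claim that $\mathcal{B}(\R)$ supplies ``a genuine non-atomic factor'' --- the measure $\mu=\sigma^2\delta_0+x^2\,\dd\nu(x)$ is itself atomic in the most basic cases (Brownian motion gives $\mu=\sigma^2\delta_0$; a compensated Poisson process gives $\mu$ proportional to $\delta_1$). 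For Brownian motion with $E_1=(0,1]$ the space $(0,1]\times\R$ is, modulo $\mm$-null sets, a single atom of $\mathcal{B}((0,1])_E\otimes\mathcal{B}(\R)$, so no two disjoint non-null admissible sets exist, yet $I_n(\cfb{n})$ (a Hermite polynomial in $W_1$) must still be shown to be $\sigma(I_1(f_1))\vee\nN$-measurable. This atomic situation is exactly what the implication is about (see the introduction's remark on non-atomic versus atomic control measures), and it is why the paper proves (3)$\Rightarrow$(5) via the Lee--Shih product formula: $I_{m+1}(f_0\hat\otimes\cdots\hat\otimes f_m)$ equals $I_1(f_0)\,I_m(f_1\hat\otimes\cdots\hat\otimes f_m)$ \emph{plus} lower-order correction terms built from the contractions $f_0\hat\otimes_0^1(\cdot)$ and $f_0\hat\otimes_1^0(\cdot)$, which account precisely for the diagonal you are trying to discard; Lemma~\ref{lemma:lee:shih:invariance} shows these contractions inherit the invariance, so an induction on the chaos order closes. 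You need that product formula (or an equivalent recursion, e.g.\ for the orthogonal polynomials generating each atomic fibre); the disjoint-support reduction alone cannot reach the target space.
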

\smallskip
Theorem \ref{thm:locally_erogdic_introduction} relies on the results of 
Section \ref{sec:diagonal_groups_and_invariant_sets} that are proved in a wider setting 
and are applicable in other situations as well (like in 
\cite{last:penrose:11}). In Section \ref{sec:examples} we verify the following examples (including the
 introductory example $F_2$) to illustrate 
Theorem \ref{thm:locally_erogdic_introduction}.
\begin{eex}
Given a time net $0\leq r_0<\ldots<r_ L\leq 1$, the examples
\begin{enumerate}
\item $f([X]_{r_1}-[X]_{r_0}, \ldots, [X]_ {r_L}-[X]_{r_{L-1}})$ and
\item $f(S_{r_1}^{r_0}, \ldots, S_{ r_L}^{ r_{L-1}})$
\end{enumerate}
admit invariances with respect to $\mpmd_{(r_{ l-1},r_{ l}]}$. In (1), the process
$([X]_t)_{t\in [0,1]}$ is the {\em quadratic variation} of $(X_t)_{t\in [0,1]}$. 
In part (2) the process $(S_t^a)_{t\in [a,1]}$ is the {\em Dol\'eans-Dade exponential}
$\dd S_t^a = S_{t-}^a \dd X_t$
with initial condition $S_a^a=1$ and chaos representation
     \[ S_t^a = 1+ \sum_{n=1}^\infty I_n\left(\frac{1}{n!}\cfa{(a,t]}{n}\right), \]
where $\cfa{(a,t]}{n}((t_1,x_1),\ldots,(t_n,x_n)):= \cf_{(a,t]}(t_1) \cdots \cf_{(a,t]}(t_n)$ and 
$(X_t)_{t\in [0,1]}$ is assumed to be square-integrable and of mean zero. 
\end{eex}

We conclude with the example mentioned in the beginning:

\begin{eex}\label{ex:bsde}
We describe the situation from \cite{geiss:steinicke:13} where the results of this paper were already applied. 
For this purpose we consider a Backward Stochastic Differential Equation (BSDE)
\[ 
Y_t = F + \int_{(t,1]} f\left (s,Y_s,\int_\R Z_{s,x} h(x) \dd \mu(x)\right ) \dd s 
       - \int_{(t,1]\times \R} Z_{s,x} \dd M(s,x) 
      \mbox{ a.s.}, \quad t\in [0,1],
\]
with $h\in L_2(\R,\mu)$, where the random measure $M$ and the Borel measure $\mu$ (both associated with $X$)
are introduced in Section \ref{sec:prelimninaries_levy_processes} below
and $f$ is an appropriate deterministic generator (for the precise setting see \cite{geiss:steinicke:13}).
Given the initial data $F\in L_2(\cF^X)$ and $f$, one looks for the solution processes 
$(Y_t)_{t\in [0,1]}$ and $(Z_{s,x})_{(s,x)\in [0,1]\times \R}$. To be able to control the 
variation of the BSDE, for example to upper bound $\|Y_t - Y_s \|_2$, the authors in \cite{geiss:steinicke:13} 
assume a time net $0=r_1<\cdots<r_L=1$ such that the kernels $f_n$ in the chaos expansion 
$F = \sum_{n=0}^\infty I_n(f_n)$ are constant on all cuboids 
\[ Q_{l_1,\ldots,l_n}:=(r_{l_1-1},r_{l_1}]\times \cdots \times (r_{l_n-1},r_{l_n}]. \]
One main step in \cite{geiss:steinicke:13} consists in verifying 
in \cite[Theorem 4.2]{geiss:steinicke:13} that the structure of the terminal condition $F$
transfers to the solution processes $Y$ and $Z$. This is done by a Picard iteration, where in  
\cite[Lemma 4.3]{geiss:steinicke:13} Theorem \ref{thm:locally_erogdic_introduction} of this article 
is applied. In Section \ref{sub:sec:BSDE} below we outline the ideas behind this application 
in a more abstract and general way.
\end{eex}

\paragraph{\bf Outline of the paper}
In Section \ref{sec:prelimninaries_levy_processes} we provide some preliminaries for L\'evy processes. The permutation operators acting on functionals of L\'evy processes are introduced in Section \ref{sec:dyadic_permutations_levy-processes}.
The first abstract set of general invariance properties is obtained in Section \ref{sec:invariances_levy_processes}, which is  based on the general concepts recalled in Appendix \ref{app:invariant_sets}.
The main results concerning L\'evy processes are presented in Section \ref{sec:reduced_chaos_expansion}. They are directly derived from the results in the
more general setting given in Section \ref{sec:diagonal_groups_and_invariant_sets}, where we 
consider diagonal groups.
In Section  \ref{sec:examples} we discuss some examples, explain a relation to the chaotic 
expansion of Nualart and Schoutens based on the Teugels martingales, and finally return to Example \ref{ex:bsde}
to discuss an application to backward stochastic differential equations in more detail.
\medskip
 
\paragraph{\bf Some notation} The space of bounded continuous functions on a metric space 
$M$ is denoted by $\nC_b(M)$, the set of positive integers by $\NN$.  Given an $L>0$ and $\xi\in \R$, we shall use the truncation function
$\psi_L(\xi) := \max \{ -L , \min \{ \xi , L \} \}$.


\section{Preliminaries for L\'evy processes}
\label{sec:prelimninaries_levy_processes}

We recall some facts about L\'evy processes, for more information the reader
is referred, for example, to  \cite{applebaum:09} and \cite{sato:99}.
Let $X=(X_t)_{t\in [0,1]}$, $X_t\colon \Omega\to\R$, be a L\'evy process,
where all paths are right-continuous and have  left-limits, $X_0\equiv 0$, and where we assume 
that $(\Omega,\cF,\P)$ is a complete probability space and that $\cF = \sigma (X_t : t\in [0,1])\vee \{ A \in \cF : \P(A)=0 \}$.
To emphasize the minimality of $\cF$ we write $\cF=\cF^X$. There are some places where stochastic integration
is  formally used. Here we assume that as filtration the augmentation of the natural filtration of $X$ is taken.
For $E\in\nB((0,1]\times \RR)$ let
\[ N(E):=\#\{t\in (0,1]\colon (t,\Delta X_t)\in E\}  \]
be the Poisson random measure associated to $X$ with values in $\{ \infty,0,1,2,\ldots \}$. Assuming $B\in \cB(\R)$ with 
$B\cap (-\varepsilon,\varepsilon) = \emptyset$ for some $\varepsilon>0$, we set 
\[ \nu(B) := \E N((0,1]\times B) \]
and by $\varepsilon\to 0$ we obtain  the L\'evy measure $\nu$ on $\cB(\R)$ with
$\nu(\{0\})=0$ and $\int_\R [x^2 \wedge 1] \dd\nu(x)<\infty$. If $\sigma\ge 0$ is the parameter for
the Brownian motion part of $X$, then we define the $\sigma$-finite measures
\equa
   \dd\mu (x) &:= & \s^2 \dd \d_0 (x) + x^2 \dd \nu (x), \\
\dd \mm (t,x) &:= & \dd (\l \otimes \m) (t,x)
\tion
on $\cB(\R)$ and $\cB((0,1]\times \R)$, respectively.
The compensated Poisson random measure is defined by $\wt{N}:=N-\l\ten \nu$ on the
ring of $E\in\nB((0,1]\times \RR)$ with $\mm(E)<\infty$.
For such an $E$  one introduces
\begin{align}\label{levyrandommeasure}
M(E) := \s \left (\int\limits_{E\cap ((0,1]\times \{0\})} \dd W_t\right ) + \lim_{N\arre\infty}
        \!\!\!\int\limits_{E\cap((0,1]\times\{\frac{1}{N}< |x| < N\})} \!\!\!x \dd \wt{N}(t,x),
\end{align}
where $W$ is the Brownian motion part of $X$ and 
the limit is taken in $L_2$. 
To recall It\^o's chaos expansion \cite{ito:56}, we let
\[ L_2^n:=L_2\big(((0,1]\times \RR)^n,\nB(((0,1]\times \RR)^n),\mm^{\ten n}\big) \]
and define for pair-wise disjoint $E_1,\ldots,E_n\in  \nB((0,1]\times \RR)$ with $\mm(E_i)<\infty$
the multiple integral
\[ I_n(f_n):=M(E_1)\cdots M(E_n)
   \sptext{1}{if}{1} 
   f_n((t_1,x_1)\ldots,(t_n,x_n)):=\cf_{E_1}(t_1,x_1)\cdots \cf_{E_n}(t_n,x_n). \]
This extends by linearity and continuity to
$I_n\colon L_2^n \to L_2(\cF^X)$.
For $n\neq m$ the integrals $I_n(f_n)$ and $I_m(f_m)$ are orthogonal for any kernels $f_n$ and 
$f_m$. A kernel $f_n$ is called {\it symmetric} provided that 
\[ f((t_1,x_1),\ldots,(t_n,x_n)) =
   f((t_{\pi(1)},x_{\pi(1)}),\ldots, (t_{\pi(n)},x_{\pi(n)})) \]
for all $(t_1,x_1),\ldots,(t_n,x_n)$ and $\pi\in \nS_n$, where $\nS_n$ is the set of all permutations acting on $\{1,\ldots,n\}$.
The symmetrization of an $f_n\in L_2^n$ is given by
\[    \tilde f_n ((t_1,x_1),\ldots,(t_n,x_n)) 
   := \frac{1}{n!} \sum_{\pi \in \nS_n}   f((t_{\pi(1)},x_{\pi(1)}),\ldots, (t_{\pi(n)},x_{\pi(n)})) \]
and shares the two important properties, $I_n(f_n)= I_n(\tilde f_n)$ a.s. and 
$\|I_n(\tilde f_n)\|_{L_2(\cF^X)}=\sqrt{n!}\|\tilde f_n\|_{L_2^n}$.
By It\^{o}'s orthogonal decomposition \cite{ito:56}, for any $F\in L_2(\nF^X)$ there exist unique symmetric kernels $f_n\in L_2^n$ such that
\[ F = \sum_{n=0}^\infty I_n(f_n)
  \sptext{1}{in}{1} L_2(\cF^X).\]
If $\nH_n := I_n(L_2^n)\subseteq L_2(\cF^X)$ and if $\wt{L}_2^n$ are the (equivalence classes of)
symmetric functions in $L_2^n$, then 
\begin{align*}
\nJ\colon \bigoplus_{n=0}^\infty \wt{L}_2^n &\longrightarrow  L_2(\nF^X)
\cong \bigoplus_{n=0}^\infty \nH_n \\
(f_n)_{n=0}^\infty & \,\,\mapsto \,\,  \sum_{n=0}^\infty I_n(f_n),
\end{align*}
defines an isometric bijection, where
$\bigoplus_{n=0}^\infty \nH_n$ is the $\ell_2$-product and
$\bigoplus_{n=0}^\infty \wt{L}_2^n$ is equipped with the norm
\[ \|(f_0,f_1,\ldots)\|:=\left (\sum_{n=0}^\infty n! \|f_n\|^2 \right )^\frac{1}{2}.\]


\section{Dyadic permutations and L\'evy processes}
\label{sec:dyadic_permutations_levy-processes}

In this section we investigate measure preserving transformations on
$L_2(\cF^X)$ and on the chaos decomposition 
induced by dyadic measure preserving maps
$g\colon (0,1]\to (0,1]$.
The final commutative diagram will be
\begin{align*}
\begin{CD}
L_2(\nF^X) @>{T_g}>> L_2(\nF^X) \\
@A{\nJ}AA @AA{\nJ}A\\
\displaystyle\bigoplus_{n=0}^\infty \wt{L}_2^n @>{S_{g^{-1}}}>>\displaystyle\bigoplus_{n=0}^\infty \wt{L}_2^n
\end{CD}
\end{align*}
and is verified in Theorem \ref{thm:SeqT} below. This diagram transfers Lemma~1 of \cite{ito:56}, where shift operations are
considered, to our setting.
The diagram is based on the fact that by the definition of L\'evy processes,
their increments are exchangeable. Later we investigate
how this exchangeability transfers to certain functionals defined on the process $X$ or more generally, to $L^2(\nF^X)$-random variables.
In order to shorten the presentation, given $0\le a < b \le 1$ and $I:=(a,b]$, we let 
$X_I := X_b - X_a$. The dyadic intervals we denote by
\[ 
   I_k^d := \left ( \frac{k-1}{2^d}, \frac{k}{2^d} \right ]
   \sptext{1}{for}{1}
   d\ge 0 \mbox{ and } k\in \{1,\dots,2^d \}. 
\]

\subsection{Construction of $T_g$}
For an integer $d\geq 0$ we let
\[
     \nH^{X,d}
  := \left\{F\in L_2(\cF^X):
     F=f\big(X_{I_1^d},\dots,X_{I_{2^d}^d}\big),
             f\in \nC_b(\RR^{2^d})\right\}
  \sptext{1}{and}{1}
  \nH^X:=\bigcup_{d\ge 0}\nH^{X,d}.\]
All spaces $\nH^{X,0} \subseteq \nH^{X,1} \subseteq \cdots \subseteq \nH^X$ are linear subspaces  of $L_2(\nF^X)$.
\begin{lemma}\label{lemma:HX0dense}
$\nH^X$ is dense in $L_2(\nF^X)$.
\end{lemma}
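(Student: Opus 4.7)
The plan is to argue in three steps. First, since bounded random variables are dense in $L_2(\cF^X)$, it suffices to approximate any bounded $F\in L_2(\cF^X)$ in $L_2$-norm by elements of $\nH^X$. Second, I identify $\cF^X$ (up to $\P$-null sets) with the $\sigma$-algebra generated by the dyadic increments: setting $\cF_d := \sigma(X_{I_1^d},\dots,X_{I_{2^d}^d})$, these $\sigma$-algebras form an increasing sequence, and $\sigma(\bigcup_d \cF_d)$ already contains $X_t$ for every dyadic $t\in [0,1]$, since $X_{k/2^d}$ is a finite sum of dyadic increments (using $X_0\equiv 0$). Right-continuity of the paths then yields $X_t=\lim_{s\downarrow t,\, s\text{ dyadic}} X_s$ a.s.\ for arbitrary $t\in [0,1]$, so $\cF^X = \sigma(\bigcup_d \cF_d)\vee\{A\in\cF:\P(A)=0\}$.

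Combining these two observations with the $L_2$-martingale convergence theorem, the conditional expectations $F_d:=\E[F\mid\cF_d]$ converge to $F$ in $L_2(\P)$. Each $F_d$ is bounded and $\cF_d$-measurable, so by the Doob--Dynkin lemma there exists a bounded Borel function $g_d\colon\R^{2^d}\to\R$ with $F_d=g_d(X_{I_1^d},\dots,X_{I_{2^d}^d})$. The task therefore reduces, for each fixed $d$, to approximating $g_d$ in $L_2(\R^{2^d},\mu_d)$ by an element of $\nC_b(\R^{2^d})$, where $\mu_d$ denotes the joint law of the $2^d$ increments on $\R^{2^d}$. This is entirely standard: $\mu_d$ is a finite Borel measure on $\R^{2^d}$, hence regular, and $\nC_b(\R^{2^d})$ is dense in $L_2(\mu_d)$ (e.g.\ by convolution with smooth mollifiers, or by Lusin's theorem followed by a truncation $\psi_L$ to preserve boundedness and continuity). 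Pulling back through the measurable map $\omega\mapsto (X_{I_1^d}(\omega),\dots,X_{I_{2^d}^d}(\omega))$ produces an element of $\nH^{X,d}\subseteq\nH^X$ that is $L_2(\P)$-close to $F_d$, and a standard diagonal choice then gives an element of $\nH^X$ close to $F$.

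The only step that uses specific structure is the identification of $\sigma(\bigcup_d \cF_d)$ with $\cF^X$ up to nulls, and that in turn relies on nothing beyond right-continuity of paths and $X_0=0$. All other steps are routine density arguments, so I do not anticipate a genuine obstacle; the mild technical care needed is to ensure that the approximating functions lie in $\nC_b(\R^{2^d})$ (not merely $L_2$) and to combine the martingale approximation index $d$ with the continuous-function approximation via a diagonal argument.
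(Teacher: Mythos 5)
Your proof is correct, but it is structured differently from the one in the paper. The paper's argument is a two-line reduction: it cites the classical fact (going back to It\^o) that
$\big\{ f(X_{(t_0,t_1]},\dots,X_{(t_{N-1},t_N]}) : 0\le t_0<\dots<t_N\le 1,\ f\in \nC_b(\R^N)\big\}$
is dense in $L_2(\nF^X)$, and then uses right-continuity of the paths only to replace increments over arbitrary intervals by increments over dyadic intervals. You instead give a self-contained argument: right-continuity is used up front to show that the dyadic increments generate $\cF^X$ modulo null sets, after which $L_2$-martingale convergence along the filtration $\cF_d=\sigma(X_{I_1^d},\dots,X_{I_{2^d}^d})$, Doob--Dynkin factorization, and density of $\nC_b(\R^{2^d})$ in $L_2$ of the (finite, hence regular) law of the increment vector finish the job. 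The two proofs use right-continuity at essentially the same point, and indeed the cited classical density result is usually proved by exactly the kind of martingale/monotone-class argument you give; what your version buys is independence from the reference at the cost of a longer write-up, while the paper's version is shorter but leans on the literature. All of your individual steps (the identification $\cF^X=\sigma(\bigcup_d\cF_d)\vee\nN$, the reduction to bounded $F$, the approximation of a bounded Borel $g_d$ by elements of $\nC_b(\R^{2^d})$ in $L_2(\mu_d)$, and the final two-step approximation) are sound, so there is no gap.
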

\begin{proof}
It is known that  
$\big\{ f(X_{(t_0,t_1]},\dots,X_{(t_{N-1},t_N]}): 0\le t_0<\ldots<t_N\le 1,f\in \nC_b(\RR^N),N\in \NN\big\}$
is dense in $L_2(\nF^X)$, cf. for example \cite{ito:56}. The right-continuity of $(X_t)_{t\in [0,1]}$ yields
our assertion.
\end{proof}
\medskip

\begin{defin}
\label{definition:dyadic_g}
\begin{enumerate}
\item For $d\ge 0$ and $\pi\in \nS_{2^d}$ we define $g_\pi\colon (0,1]\to (0,1]$ by shifting $I_k^d$ onto $I_{\pi(k)}^d$, i.e.
      \[ g_\pi(t):= \frac{\pi(k)}{2^d} - \left (  \frac{k}{2^d} - t \right )
        \sptext{1}{if}{1}
         t\in \left (  \frac{k-1}{2^d},\frac{k}{2^d} \right ]. \]
\item We let
      $\mpmd :=  \{ g_\pi\colon (0,1]\to (0,1]\, : \,\pi\in \nS_{2^d}, d\ge 0 \}$.
\item We say that $g\in\mpmd$ is \emph{represented by} $\p\in\nS_{2^d}$ for some $d\geq0$ if $g=g_\pi$.
\item For $g\in \mpmd$ we let 
      $\deg(g):=\min d$, where the minimum is taken  over all $d\ge 0$ such that $g$ can be 
      represented by some $\p\in\nS_{2^d}$.
\end{enumerate}
\end{defin}
\smallskip

Note that for $d\ge \deg(g)$ the map $g$ can always be represented by some $\p\in\nS_{2^d}$ and that all $g\in \mpmd$ preserve 
the Lebesgue measure.

\begin{defin}
For $g\in \mpmd$, $d\ge \deg(g)$, and $\pi\in \nS_{2^d}$ representing $g$, we define the operator
$ T_g\colon \nH^{X,d} \to \nH^{X,d} $
by
\[
   T_g f\big(X_{I^d_{1}},\ldots, X_{I^d_{2^d}} \big )
         := f\big(X_{I^d_{\pi(1)}},\ldots, X_{I^d_{\pi(2^d)}} \big )
          = f\big(X_{g(I^d_1)},\ldots, X_{g(I^d_{2^d})} \big ),
\]
where $g(I) := \{ g(t) : t\in I\} \subseteq (0,1]$ for $I\subseteq (0,1]$.
\end{defin}
\medskip

\begin{lemma}
\label{lemma:properties_Tg}
\begin{enumerate}
    \item For $d\ge \deg(g)$ the operator $T_g\colon\nH^{X,d}\to \nH^{X,d}$ is well defined.
    \item For $e\ge d \ge \deg(g)$ the operators 
          $T_g\colon\nH^{X,d}\to \nH^{X,d}$ and $T_g:\nH^{X,e}\to \nH^{X,e}$
          are consistent
          in the sense that if $g=g_{\pi_e}=g_{\pi_d}$ with $\pi_e\in \nS_{2^e}$ and $\pi_d \in \nS_{2^d}$, then
          $T_{g_{\pi_e}}|_{\nH^{X,d}\to \nH^{X,d}} = T_{g_{\pi_d}}$.
    \item For $F\in \nH^{X,d}$ with $d\ge \deg(g)$ the random variables $F$ and  $T_g F$ have the same distribution. In particular, $T_g$ is a linear isometry 
	  in $L_2(\nF^X)$.
\end{enumerate}
\end{lemma}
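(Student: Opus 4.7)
\textbf{Proof plan for Lemma \ref{lemma:properties_Tg}.}

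For part (1), the obstacle is that a single random variable $F\in\nH^{X,d}$ may admit several representations $F=f(X_{I^d_1},\dots,X_{I^d_{2^d}})$ with different $f\in\nC_b(\R^{2^d})$, and one must check that $T_g F$ does not depend on this choice. My plan is to use that $X_{I^d_1},\dots,X_{I^d_{2^d}}$ are i.i.d.\ (each distributed as $X_{1/2^d}$), so the joint law has support $S^{2^d}$ where $S$ is the support of the law of $X_{1/2^d}$. If $f_1(X_{I^d_1},\dots,X_{I^d_{2^d}})=f_2(X_{I^d_1},\dots,X_{I^d_{2^d}})$ a.s.\ with $f_i$ continuous, then $f_1=f_2$ on $S^{2^d}$. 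Since $(X_{I^d_{\pi(1)}},\dots,X_{I^d_{\pi(2^d)}})$ has the same joint distribution and hence the same support $S^{2^d}$, we get $f_1(X_{I^d_{\pi(1)}},\dots)=f_2(X_{I^d_{\pi(1)}},\dots)$ a.s. Well-definedness follows; the fact that one can choose $d\geq\deg(g)$ is built into the definition.

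For part (2), the task is to compare $T_{g_{\pi_d}}$ and $T_{g_{\pi_e}}$ on $F\in\nH^{X,d}$. Given $F=f(X_{I^d_1},\dots,X_{I^d_{2^d}})$, I would refine this representation to level $e$ by writing each coarse increment as a sum of fine increments, $X_{I^d_k}=\sum_{j:\,I^e_j\subseteq I^d_k}X_{I^e_j}$, which produces an $\tilde f\in\nC_b(\R^{2^e})$ with $F=\tilde f(X_{I^e_1},\dots,X_{I^e_{2^e}})$. The key observation is that since $g=g_{\pi_d}=g_{\pi_e}$ maps $I^d_k$ onto $I^d_{\pi_d(k)}$, we have the set identity $g(I^d_k)=\bigcup_{j:\,I^e_j\subseteq I^d_k}g(I^e_j)=I^d_{\pi_d(k)}$, so that $X_{g(I^d_k)}=\sum_{j:\,I^e_j\subseteq I^d_k}X_{g(I^e_j)}$. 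Plugging this into the definitions of $T_{g_{\pi_d}}F$ and $T_{g_{\pi_e}}F$ shows they agree.

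For part (3), I invoke exchangeability: because $X$ has independent stationary increments, the vectors $(X_{I^d_1},\dots,X_{I^d_{2^d}})$ and $(X_{I^d_{\pi(1)}},\dots,X_{I^d_{\pi(2^d)}})$ are both i.i.d.\ with marginals equal to the law of $X_{1/2^d}$, hence equidistributed. Applying the continuous function $f$ componentwise gives $F\stackrel{d}{=}T_g F$, and in particular $\|F\|_{L_2}=\|T_g F\|_{L_2}$. Linearity of $T_g$ on each $\nH^{X,d}$ is immediate from its definition, and consistency across levels from part (2) yields a well-defined linear isometry on $\nH^X\subseteq L_2(\cF^X)$. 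I expect the subtlest step to be the well-definedness in part (1), since everything else reduces to bookkeeping with the partition refinement or a one-line distributional argument.
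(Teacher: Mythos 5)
Your proposal is correct and rests on the same key fact as the paper's proof, namely the exchangeability of the increments of the L\'evy process: the permuted increment vector is equidistributed with the original one, which gives well-definedness in (1) and the distributional identity in (3), while (2) is the refinement bookkeeping that the paper leaves implicit. The only stylistic difference is that in (1) you detour through continuity and the product structure of the support, whereas the paper simply pushes the null event $\{f_1\neq f_2\}$ forward under the equidistributed permuted vector; your version works but the continuity of $f_1,f_2$ is not actually needed there.
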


\begin{proof}
(1) Assume that 
$   f_1 \big(X_{I^d_{1}},\ldots, X_{I^d_{2^d}} \big )
  = f_2 \big(X_{I^d_{1}},\ldots, X_{I^d_{2^d}} \big )$ a.s.
Because of the exchangeability of the increments of the L\'evy process,
the permuted vector of increments has the same distribution as the original vector.
Therefore we have that
$    f_1\big(X_{I^d_{\pi(1)}},\ldots, X_{I^d_{\pi(2^d)}} \big )
   = f_2\big(X_{I^d_{\pi(1)}},\ldots, X_{I^d_{\pi(2^d)}} \big )$ a.s.\ 
and the equivalence classes coincide.
Assertion (2) follows from the definition and assertion 
(3) follows by the same distributional argument as in (1).
\end{proof}
\medskip

Because of Lemma \ref{lemma:properties_Tg} we can extend $T_g$ to an $L_2$-isometry 
$T_g:\nH^X\to\nH^X$,
and by Lemma~\ref{lemma:HX0dense}, we obtain an isometry
\[ T_g\colon L_2(\cF^X) \to  L_2(\cF^X). \]
The operator $T_g$ acts on the jump-part of $X$ as follows:
\medskip

\begin{lemma}\label{lemma:permpoisson}
Let $g\in \mpmd$, $N$ be the Poisson random measure associated to $X$, $I=(a,b]$ with $0\le a < b \le 1$
being dyadic, and $E=(c,d)$ with $-\infty<c<d<\infty$ and $0\notin \conj{E}$. Then,
\begin{align}\label{permpoisson}
T_g \int\limits_{I\times E} x \dd N(s,x) &= \int\limits_{g(I)\times E} x \dd N(s,x)\, \as
\end{align}
\end{lemma}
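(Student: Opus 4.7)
The plan is to approximate the stochastic integral by functionals in $\nH^X$, on which $T_g$ acts by explicit substitution of increments, and then pass to the limit by $L_2$-continuity of $T_g$.

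First, since $I$ is dyadic and $g\in\mpmd$ can be represented at any level $d\ge\deg(g)$, I may refine so that $I=\bigcup_{k\in K}I_k^d$ for a set $K$ of indices at some level $d\ge\deg(g)$, with $g(I)=\bigcup_{k\in K}I_{\pi(k)}^d$. By linearity it suffices to treat $I=I_k^d$ itself. Fix $\varepsilon>0$ with $E\subseteq\{|x|\ge\varepsilon\}$, which is possible since $0\notin\conj{E}$. For any $\phi\in\nC_b(\RR)$ vanishing on $(-\varepsilon/2,\varepsilon/2)$, consider the functional
\[
F_e^\phi:=\sum_{j:\,I_j^e\subseteq I}\phi\bigl(X_{I_j^e}\bigr)\in\nH^{X,e}\qquad(e\ge d).
\]
I claim that $F_e^\phi\to\sum_{t\in I}\phi(\Delta X_t)$ in $L_2$ as $e\to\infty$. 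The limit is a.s.\ a finite sum, since the jumps with $|\Delta X_t|\ge\varepsilon/2$ are locally finite (compound-Poisson). For the convergence, use that as $e\to\infty$: (i) any two such jumps lie eventually in different dyadic subintervals; (ii) on an $I_j^e$ containing such a jump at time $\tau$, the increment $X_{I_j^e}$ differs from $\Delta X_\tau$ only by the Brownian, small-jump, and drift contributions, which tend to $0$ in probability and are $L_2$-controlled of order $\sqrt{2^{-e}}$; (iii) on an $I_j^e$ without such a jump, $X_{I_j^e}$ itself is of this small order, hence $\phi(X_{I_j^e})\to 0$. Boundedness of $\phi$ together with the $L_2$-bound on the (finitely many) large-jump terms yields uniform integrability and hence $L_2$-convergence.

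Next, because $F_e^\phi\in\nH^{X,e}$, the definition of $T_g$ in combination with the fact that $g$ permutes the level-$e$ dyadic intervals (any representation of $g$ at level $d$ refines to one at level $e$) gives
\[
T_gF_e^\phi=\sum_{j:\,I_j^e\subseteq I}\phi\bigl(X_{g(I_j^e)}\bigr)=\sum_{j':\,I_{j'}^e\subseteq g(I)}\phi\bigl(X_{I_{j'}^e}\bigr),
\]
which by the same convergence argument applied to $g(I)$ tends to $\sum_{t\in g(I)}\phi(\Delta X_t)$ in $L_2$. The $L_2$-continuity of $T_g$ (Lemma \ref{lemma:properties_Tg}(3)) thus yields $T_g\sum_{t\in I}\phi(\Delta X_t)=\sum_{t\in g(I)}\phi(\Delta X_t)$.

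Finally, approximate the bounded discontinuous integrand $x\mapsto x\cf_E(x)$ by a sequence $\phi_n\in\nC_b(\RR)$ vanishing on $(-\varepsilon/2,\varepsilon/2)$ with $\phi_n(x)\to x\cf_E(x)$ pointwise and $|\phi_n(x)|\le C|x|\cf_{\{|x|\ge\varepsilon/2\}}$. Since $\mm(I\times\{|x|\ge\varepsilon/2\})$-integrability of $x^2$ gives a uniform $L_2$-dominant, dominated convergence produces
\[
\sum_{t\in I}\phi_n(\Delta X_t)\longrightarrow\int_{I\times E}x\,\dd N(s,x),\qquad\sum_{t\in g(I)}\phi_n(\Delta X_t)\longrightarrow\int_{g(I)\times E}x\,\dd N(s,x),
\]
in $L_2$. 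Applying $T_g$ and once more using its $L_2$-isometry property delivers the claimed identity. The main obstacle is the $L_2$-approximation step for $F_e^\phi$: one has to show rigorously that the Brownian and small-jump contributions to the increments $X_{I_j^e}$ become negligible as the dyadic mesh refines, uniformly enough to promote a.s.\ and probability convergence to $L_2$. This is the only analytic content of the argument; everything else is bookkeeping on dyadic partitions and linearity.
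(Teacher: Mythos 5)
Your overall strategy is the same as the paper's: approximate the jump integral by bounded continuous functionals of dyadic increments (which lie in $\nH^{X,e}$, where $T_g$ acts by explicit permutation), identify the limit of the permuted sums as the corresponding sum over $g(I)$, and pass to the limit using that $T_g$ is an $L_2$-isometry. However, there is a genuine gap at exactly the point you flag as ``the main obstacle'': the claimed $L_2$-convergence of $F_e^\phi=\sum_{j}\phi(X_{I_j^e})$ to $\sum_{t\in I}\phi(\Delta X_t)$ is asserted but not proved. Almost sure convergence follows from the c\`adl\`ag uniformity lemma, but upgrading it to $L_2$ requires uniform integrability of $(F_e^\phi)^2$, and this does not follow from ``boundedness of $\phi$ together with the $L_2$-bound on the finitely many large-jump terms'': the number of dyadic intervals $I_j^e$ with $|X_{I_j^e}|\ge \varepsilon/2$ is only eventually (for $e\ge e_0(\omega)$, with $e_0$ random) controlled by the number of large jumps, and for smaller $e$ it can be of order $2^{e}$. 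One can close this by estimating all moments of the count $\#\{j:|X_{I_j^e}|\ge\varepsilon/2\}$ uniformly in $e$, but you have not done so. The paper's proof avoids the issue entirely by first wrapping everything in the truncation $\psi_L$: then every approximant $\psi_L\bigl(\sum_j h_l(X_{I_j^e})\bigr)$ is uniformly bounded by $L$, almost sure convergence upgrades to $L_2$ by dominated convergence, and the truncation is removed only at the very end via $\psi_L(F)\to F$ in $L_2$ and the isometry of $T_g$. This truncation is the missing device in your argument.

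A secondary, fixable inaccuracy: your dominating bound $|\phi_n(x)|\le C|x|\cf_{\{|x|\ge\varepsilon/2\}}$ does not yield an $L_2$-dominant for a general L\'evy process, since $\mu(\{|x|\ge\varepsilon/2\})=\int_{\{|x|\ge\varepsilon/2\}}x^2\,\dd\nu(x)$ may be infinite (the process need not be square-integrable). The approximants must be supported in the \emph{bounded} set $\conj{E}=[c,d]$, as the paper's interpolants $h_l$ are; then the dominant $\max(|c|,|d|)\,N(I\times\conj{E})$ is Poisson and lies in $L_2$ because $0\notin\conj{E}$ forces $\nu(\conj{E})<\infty$.
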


\begin{proof}
The proof follows an idea of \cite{geiss:laukkarinen:11}. We show that for $L\in \NN$ and the truncation $\psi_L(\xi)=\max \{-L,\min\{\xi,L\}\}$
it holds that
\[
   T_g \psi_L\left(\int_{I\times E} x \dd N(s,x)\right) = \psi_L\left(\int_{g(I)\times E} x \dd N(s,x)\right)
   \mbox{ a.s.}
\]
Then the assertion follows from the fact that $\psi_L(F)$ converges in $L_2(\cF^X)$ to $F$ whenever $F\in L_2(\cF^X)$.
For $l\in \N$ with $2/l<d-c$ we define a continuous function $h_l$ such that 
$h_l(x)=x$ on $[c+(1/l),d-(1/l)]$, $h_l(x)=0$ if $x\not\in [c,d]$ and on the remaining parts we take the linear 
interpolation. By construction, 
$\lim_{l\to \infty} h_l(x) = x \cf_E(x)$ and $|h_l(x)| \le  |x| \cf_E(x)$. By definition,
\[
     T_g \psi_L\left( \sum_{\substack{k=1,\ldots,2^n\\a\cdot 2^n<k\leq b\cdot 2^n}}
        h_l \left(X_{I_k^n}\right)\right)
   = \psi_L\left( \sum_{\substack{k=1,\ldots,2^n\\a\cdot 2^n<k\leq b\cdot 2^n}}
        h_l \left(X_{g(I_k^n)}\right)\right)
        \mbox { a.s.},
\]
where we assume that $n\ge \deg(g)\vee n_0$, with $n_0\ge 0$ chosen such that $a$ and $b$ belong to the dyadic grid
with mesh-size $2^{-n_0}$.
Using the fact that for a fixed c\`adl\`ag path $t\to \xi_t=X_t(\omega)$ and for any 
$\varepsilon>0$ one finds a partition 
$0=t_0<\cdots < t_N=1$ such that for all $t_{i-1}\le s < t < t_i$ one has that 
$|\xi_t-\xi_s| \le \varepsilon$ (see \cite[Lemma 1, Chapter 3]{billingsley:99}), one concludes 
by $n\to \infty$ with dominated convergence that 
\[
     T_g \psi_L\left( \sum_{t\in (a,b]} h_l(\Delta X_t) \right)
   = \psi_L\left(  \sum_{t\in (a,b]} h_l(\Delta X_{g(t)}) \right) \mbox{ a.s.}
\]
Letting $l\to \infty$ and using again dominated convergence finally yields
\[
     T_g \psi_L\left( \sum_{t\in (a,b]} \Delta X_t \cf_E(\Delta X_t) \right)
   = \psi_L\left(  \sum_{t\in (a,b]} \Delta X_{g(t)}\cf_E(\Delta X_{g(t)}) \right)
     \mbox{ a.s.} \qedhere
\]
\end{proof}

The Gaussian part of $X$ is handled by the next lemma, which is proved in the Appendix
\ref{section:technical_proofs}.

\begin{lemma}\label{lemma:permbrownian}
Let $g\in \mpmd$ and let $(\sigma B_t)_{t\in [0,1]}$ be the Brownian motion part of $X$, where we assume 
that $\sigma>0$ and that $t\in (0,1]$ is dyadic. Then,
\[  T_g B_t =\int\limits_{g((0,t])}\dd B_s\, \as \]
\end{lemma}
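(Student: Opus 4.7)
The plan is to reduce to the case of a single dyadic interval and then combine the unitarity of $T_g$ with the exchangeability of the L\'evy increments. Fix a dyadic $t=m/2^{n_0}$, choose $d\geq\deg(g)\vee n_0$ and a permutation $\pi\in\nS_{2^d}$ representing $g$, and let $K:=\{k:I_k^d\subseteq(0,t]\}$. Then $B_t=\sum_{k\in K}B_{I_k^d}$ and, since the sets $g(I_k^d)=I_{\pi(k)}^d$ are pairwise disjoint dyadic intervals, $\int_{g((0,t])}\dd B_s=\sum_{k\in K}B_{g(I_k^d)}$. By linearity of $T_g$ on $L_2(\nF^X)$ it suffices to prove $T_gB_I=B_{g(I)}$ a.s.\ for each $I=I_k^d$ with $k\in K$.

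Next I exploit the adjoint of $T_g$. Since $T_gT_{g^{-1}}=T_{g^{-1}}T_g=\mathrm{id}$ on every $\nH^{X,d}$ by direct computation, and hence on $L_2(\cF^X)$ by Lemma \ref{lemma:HX0dense}, the isometry from Lemma \ref{lemma:properties_Tg}(3) is in fact unitary with $T_g^{\ast}=T_{g^{-1}}$. Consequently, for each $d'\ge d$ and $h\in\nC_b(\R^{2^{d'}})$,
\[
  \E\!\left[(T_gB_I)\,h(X_{I_1^{d'}},\ldots,X_{I_{2^{d'}}^{d'}})\right]
  =\E\!\left[B_I\,h(X_{g^{-1}(I_1^{d'})},\ldots,X_{g^{-1}(I_{2^{d'}}^{d'})})\right],
\]
where I have used that $T_{g^{-1}}$ acts on $h(X_{I_1^{d'}},\ldots)$ by replacing each $I_j^{d'}$ with $g^{-1}(I_j^{d'})=I_{(\pi')^{-1}(j)}^{d'}$; here $\pi'\in\nS_{2^{d'}}$ denotes the refinement of $\pi$ to level $d'$.

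The crucial step is the distributional identity
\[
 \bigl(B_I,\,X_{g^{-1}(I_1^{d'})},\ldots,X_{g^{-1}(I_{2^{d'}}^{d'})}\bigr)
 \stackrel{d}{=}
 \bigl(B_{g(I)},\,X_{I_1^{d'}},\ldots,X_{I_{2^{d'}}^{d'}}\bigr).
\]
This follows from the exchangeability of the i.i.d.\ family of pairs $(B_{I_j^{d'}},X_{I_j^{d'}})_{j=1}^{2^{d'}}$: applying the permutation $\pi'$ to this family preserves its joint distribution, while a direct bookkeeping shows that $\pi'$ sends $\{j:I_j^{d'}\subseteq I\}$ bijectively onto $\{j:I_j^{d'}\subseteq g(I)\}$, thereby converting $B_I=\sum_{j:I_j^{d'}\subseteq I}B_{I_j^{d'}}$ into $B_{g(I)}$ and the tuple $(X_{I_{(\pi')^{-1}(l)}^{d'}})_l$ into $(X_{I_l^{d'}})_l$.

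Combining the two displays yields $\E[T_gB_I\cdot F]=\E[B_{g(I)}\cdot F]$ for every $F\in\bigcup_{d'\ge d}\nH^{X,d'}$, and this union is dense in $L_2(\cF^X)$ by Lemma \ref{lemma:HX0dense}. Hence $T_gB_I=B_{g(I)}$ a.s., which completes the argument. The technical heart is the distributional identity above, where one must track $\pi$ through its refinements to all finer dyadic levels while keeping the correlation between the Brownian components $B_{I_j^{d'}}$ and the full L\'evy increments $X_{I_j^{d'}}$ intact; this is precisely where the i.i.d.\ structure of the pairs $(B_{I_j^{d'}},X_{I_j^{d'}})_j$ is essential.
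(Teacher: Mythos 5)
Your argument is correct, but it takes a genuinely different route from the paper's. The paper proves the lemma pathwise: it invokes the L\'evy--It\^o decomposition to represent $\sigma B_t$ as an almost sure limit of $X_t-\int_{(0,t]\times E_N}x\,\dd N(s,x)+\alpha_N t$, truncates with $\psi_L$ and $\psi_K$, and then pushes $T_g$ through this expression using Lemma \ref{lemma:permpoisson} (the action of $T_g$ on the jump measure) and Lemma \ref{lemma:composition_and_Tg}. You instead argue by duality: after reducing to a single dyadic interval $I$ you identify $T_g^{\ast}=T_{g^{-1}}$ and test $T_gB_I$ against the dense family of cylinder functionals $h(X_{I_1^{d'}},\ldots,X_{I_{2^{d'}}^{d'}})$, reducing everything to a distributional identity for the i.i.d.\ pairs $(B_{I_j^{d'}},X_{I_j^{d'}})_j$; the bookkeeping $\pi'(\{j:I_j^{d'}\subseteq I\})=\{l:I_l^{d'}\subseteq g(I)\}$ and the identification of the adjoint both check out. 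Your route avoids the truncation gymnastics and does not need Lemma \ref{lemma:permpoisson} at all; the price is that the i.i.d.\ claim for the pairs --- and already the fact that $B_I$ is an $\cF^X$-measurable functional so that $T_gB_I$ is defined --- still rests on the L\'evy--It\^o decomposition, so the same structural input enters, only distributionally rather than pathwise. The paper's approach has the advantage of reusing the already-established jump lemma and yielding an explicit pathwise identification consistent with how $T_g$ is built; yours is shorter and isolates what is really needed, namely the unitarity of $T_g$ and the exchangeability of the joint Brownian--L\'evy increments.
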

\medskip

\subsection{Construction of $S_g$}
For $g\in\mpmd$ we define the operator
\begin{equation}\label{Sop}
S_g\colon \prod_{n=0}^\infty L_2^n \rightarrow \prod_{n=0}^\infty L_2^n
\sptext{1}{by}{1}
(f_n)_{n=0}^\infty \mapsto \big(S_{g,n}(f_n)\big)_{n=0}^\infty,
\end{equation}
where $S_{g,n}\colon L_2^n \rightarrow L_2^n$ is given by
\[ f_n\big((t_1,x_1),\ldots,(t_n,x_n)\big) \mapsto f_n\big((g(t_1),x_1),\ldots,(g(t_n),x_n)\big). \]
The distributions of $f_n$ and $S_{g,n}f_n$ coincide, so that the operators $S_{g,n}$ and 
$S_g$ are isometries. The next lemma shows that we can restrict ourselves 
to symmetric functionals in $L_2^n$ when investigating $S_g$.
\medskip

\begin{lemma}
\begin{enumerate}
\item For $f_n,h_n\in L_2^n$ with $I_n(f_n)=I_n(h_n)$ one has $I_n (S_{g,n}f_n)=I_n(S_{g,n}h_n)$. 
\item For $f_n\in L_2^n$ one has $I_n(S_{g,n} f_n)=I_n( S_{g,n} \wt{f_n})$.
\end{enumerate}
\end{lemma}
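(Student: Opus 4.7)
The plan is to reduce both claims to a single algebraic observation: the time--reparametrisation $S_{g,n}$ commutes with the symmetrisation map $f_n\mapsto\widetilde{f_n}$, combined with the standard identity $\|I_n(f_n)\|_{L_2(\cF^X)}=\sqrt{n!}\,\|\widetilde{f_n}\|_{L_2^n}$ recalled in Section~\ref{sec:prelimninaries_levy_processes}.

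First I would record the commutation. For $\pi\in\nS_n$ let $\sigma_\pi$ act on kernels by permuting the \emph{pairs} $(t_i,x_i)$ jointly, so that $\widetilde{f_n}=\frac{1}{n!}\sum_\pi \sigma_\pi f_n$. Since $S_{g,n}$ only alters the time components and does so uniformly in every coordinate, one checks directly from the definitions that
\[
 (\sigma_\pi S_{g,n} f_n)((t_1,x_1),\ldots,(t_n,x_n))=f_n\big((g(t_{\pi(1)}),x_{\pi(1)}),\ldots,(g(t_{\pi(n)}),x_{\pi(n)})\big)=(S_{g,n}\sigma_\pi f_n)((t_1,x_1),\ldots,(t_n,x_n)),
\]
so $\sigma_\pi$ and $S_{g,n}$ commute for every $\pi$, whence $\widetilde{S_{g,n}f_n}=S_{g,n}\widetilde{f_n}$ in $L_2^n$.

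For (1), I would argue as follows. By the isometry formula and $I_n(f_n)=I_n(\widetilde{f_n})$ (both stated earlier), one has $I_n(\varphi)=0$ in $L_2(\cF^X)$ if and only if $\widetilde{\varphi}=0$ $\mm^{\otimes n}$-a.e. Applying this to $\varphi:=f_n-h_n$ gives $\widetilde{f_n-h_n}=0$ a.e. Using the commutation established above and the fact (already noted in the text) that $S_{g,n}$ is an $L_2^n$-isometry because $g$ preserves Lebesgue measure, we obtain
\[
 \widetilde{S_{g,n}(f_n-h_n)}=S_{g,n}\widetilde{(f_n-h_n)}=0 \quad\mm^{\otimes n}\text{-a.e.},
\]
and therefore $I_n(S_{g,n}f_n)-I_n(S_{g,n}h_n)=I_n(S_{g,n}(f_n-h_n))=0$ in $L_2(\cF^X)$, which is (1). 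Part (2) is then an immediate consequence: since $I_n(f_n)=I_n(\widetilde{f_n})$, applying (1) to the pair $(f_n,\widetilde{f_n})$ yields $I_n(S_{g,n}f_n)=I_n(S_{g,n}\widetilde{f_n})$.

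There is no real obstacle here; the only point that deserves a line of verification is the commutation $\widetilde{S_{g,n}f_n}=S_{g,n}\widetilde{f_n}$, which could be glossed over but is the heart of the argument. Once it is in place, the $L_2$-isometry $\|I_n(\cdot)\|_{L_2(\cF^X)}=\sqrt{n!}\,\|\widetilde{\,\cdot\,}\|_{L_2^n}$ does all the remaining work and part~(2) is a one-line corollary of part~(1).
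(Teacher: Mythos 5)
Your proposal is correct and follows essentially the same route as the paper: the key step in both is the pointwise commutation $\widetilde{S_{g,n}f_n}=S_{g,n}\widetilde{f_n}$ (which the paper establishes by the change of variables $r=g(s)$), combined with the fact that the symmetrization determines $I_n$ and that $S_{g,n}$ preserves null sets because $g$ is measure preserving; part (2) is deduced from part (1) exactly as in the paper. Phrasing the argument through the difference $f_n-h_n$ and the norm identity is only a cosmetic variant of the paper's ``$\widetilde{f_n}=\widetilde{h_n}$ a.e.\ implies $\widetilde{S_{g,n}f_n}=\widetilde{S_{g,n}h_n}$ a.e.''
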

\smallskip

\begin{proof}
(2) follows from (1) by the property $I_n(f_n)=I_n(\wt{f_n})$.
\smallskip

(1) We know that for the symmetrizations $\wt{f_n}$ and $\wt{h_n}$ we have $I_n(f_n)=I_n(h_n)$ a.e.\
if and only if $\wt{f_n}=\wt{h_n}$ a.e. Hence it suffices to show that $\wt{f_n}=\wt{h_n}$ a.e.\ implies 
$\wt{S_{g,n}f_n}=\wt{S_{g,n}h_n}$ a.e. Using the transformation $r=g(s)$, this follows from
\equa
      \left(\wt{S_{g,n}f_n}\right)\big((s_1,x_1),\ldots,(s_n,x_n)\big)
& = & \frac{1}{n!}\sum_{\r \in \nS_n} f_n\big((g (s_{\r (1)}),x_{\r (1)}),\ldots,(g (s_{\r (n)}),
      x_{\r (n)})\big)\\
& = & \frac{1}{n!}\sum_{\r \in \nS_n} f_n\big((r_{\r (1)},x_{\r (1)}),\ldots,(r_{\r (n)},x_{\r (n)})
      \big)\\
& = & \wt{f_n}\big((r_1,x_1),\ldots,(r_n,x_n)\big)\\
& = & \wt{h_n}\big((r_1,x_1),\ldots,(r_n,x_n)\big)\\
& = & \left(\wt{S_{g,n}h_n}\right)\big((s_1,x_1),\ldots,(s_n,x_n)\big)
\tion
for every $((r_1,x_1),\ldots,(r_n,x_n))$ for which $\wt{f_n}$ and $\wt{h_n}$ coincide. This concludes the proof.
\end{proof}

\subsection{The commutative diagram}

\begin{thm}\label{thm:SeqT}
For $g\in\mpmd$ the following diagram is commutative:
\begin{align}\label{commdiag}
\begin{CD}
L_2(\nF^X) @>{T_g}>> L_2(\nF^X) \\
@A{\nJ}AA @AA{\nJ}A\\
\displaystyle\bigoplus_{n=0}^\infty \wt{L}_2^n @>{S_{g^{-1}}}>>\displaystyle\bigoplus_{n=0}^\infty \wt{L}_2^n
\end{CD}
\end{align}
\end{thm}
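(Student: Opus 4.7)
The plan is to verify commutativity of the diagram on a dense subspace of $\bigoplus_{n=0}^\infty \wt L_2^n$ and extend by $L_2$-continuity. First I would note that both compositions are $L_2$-isometries: $\nJ$ by It\^o's orthogonal decomposition, $T_g$ by Lemma~\ref{lemma:properties_Tg}(3) together with Lemma~\ref{lemma:HX0dense}, and $S_{g^{-1}}$ because $g$ preserves the Lebesgue measure and hence $\mm^{\ten n}$ in each time coordinate. As dense set I would take finite linear combinations of single-entry sequences $(0,\ldots,0,f_n,0,\ldots)$ with
\[
  f_n = \wt{\cf_{E_1}\ten\cdots\ten\cf_{E_n}}
\]
for pairwise disjoint rectangles $E_i = A_i\times B_i$, where, after choosing $d\ge\deg(g)$ sufficiently large, each $A_i$ is a finite union of intervals $I_k^d$ and each $B_i$ is either $\{0\}$ or a finite disjoint union of bounded open intervals bounded away from $0$. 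Density is a standard consequence of the product structure $\mm=\lambda\ten\mu$.

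For such a symmetric $f_n$, pairwise disjointness of the $E_i$ yields $\nJ(f_n) = I_n(f_n) = M(E_1)\cdots M(E_n)$. Setting $g(E_i) := g(A_i)\times B_i$ --- still pairwise disjoint since $g$ is a bijection --- the substitution $t\mapsto g(t)$ defining $S_{g^{-1},n}$ turns $\cf_{A_i}$ into $\cf_{A_i}\circ g^{-1} = \cf_{g(A_i)}$, so that
\[
  \nJ S_{g^{-1}}(f_n) = I_n\bigl(\wt{\cf_{g(E_1)}\ten\cdots\ten\cf_{g(E_n)}}\bigr) = M(g(E_1))\cdots M(g(E_n)).
\]
Hence it suffices to show
\[
  T_g\bigl(M(E_1)\cdots M(E_n)\bigr) = M(g(E_1))\cdots M(g(E_n)).
\]

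The single-factor identity $T_g M(E_i) = M(g(E_i))$ follows from Lemma~\ref{lemma:permbrownian} when $B_i=\{0\}$ and from Lemma~\ref{lemma:permpoisson} when $B_i$ is bounded away from $0$; indeed, since $g$ preserves the Lebesgue measure the compensator $\lambda\ten\nu$ is invariant under the time-substitution, so the identity transfers from $\int x\,dN$ to $\int x\,d\wt N$, and then via the defining $L_2$-limit in~\eqref{levyrandommeasure} together with the $L_2$-continuity of $T_g$ to $M(E_i)$ itself. For the product I would approximate each $M(E_i)$ in $L_p$ (for every $p<\infty$, which is available because $B_i$ is bounded and hence $M(E_i)\in\bigcap_{p\ge 2}L_p$) by bounded continuous functions $F_i^{(m)}\in\nH^{X,d_m}$ with $d_m\ge d$ and $d_m\uparrow\infty$, where $F_i^{(m)}$ is built only from the subfamily of increments $X_{I_k^{d_m}}$ with $I_k^{d_m}\subseteq A_i$ together with the localized truncated jump-functionals appearing in the proof of Lemma~\ref{lemma:permpoisson}. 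Choosing the $F_i^{(m)}$ uniformly bounded in $L_{2n}$, H\"older yields $F_1^{(m)}\cdots F_n^{(m)}\to M(E_1)\cdots M(E_n)$ in $L_2$. On $\nH^{X,d_m}$ the operator $T_g$ is the substitution $X_{I_k^{d_m}}\mapsto X_{g(I_k^{d_m})}$ and is therefore multiplicative, so
\[
  T_g\bigl(F_1^{(m)}\cdots F_n^{(m)}\bigr) = T_g F_1^{(m)}\cdots T_g F_n^{(m)};
\]
each $T_g F_i^{(m)}$ is an analogous $\nH^{X,d_m}$-approximant of $M(g(E_i))$, and passage to the limit on both sides closes the diagram.

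The hard part will be the last step, namely constructing the $F_i^{(m)}$ so that they are simultaneously localized to the disjoint subfamilies of dyadic increments associated to $A_i$ (so that $T_g$ acts via substitution), $L_{2n}$-close to $M(E_i)$ with uniformly bounded $L_{2n}$-norms (so that products converge in $L_2$), and genuine elements of $\nH^{X,d_m}$ (bounded continuous functions of increments, which for the Brownian part requires separating the continuous part of $X$ from the jump part via a $\psi_L$-truncation as in Lemma~\ref{lemma:permpoisson}). Once this multiplicativity transfer from $\nH^{X,d_m}$ to products of random-measure integrals is set up, the single-factor identity is a direct repackaging of Lemmas~\ref{lemma:permpoisson} and~\ref{lemma:permbrownian}, and the initial reduction to simple tensors is the standard $L_2$-continuity argument.
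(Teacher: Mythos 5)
Your overall route coincides with the paper's: reduce by density to symmetrized indicators of pairwise disjoint rectangles, identify both sides of the diagram with products of random measures, and prove $T_g\big(M(E_1)\cdots M(E_n)\big)=M(g(E_1))\cdots M(g(E_n))$ using Lemma~\ref{lemma:permpoisson} for the jump part and Lemma~\ref{lemma:permbrownian} for the Gaussian part (your remark that the compensator term is unaffected because $g$ preserves Lebesgue measure is exactly the point needed to pass from $\int x\,\dd N$ to $\int x\,\dd\wt N$). The one place where you genuinely diverge is the multiplicativity step, which you correctly single out as the hard part: you propose building localized approximants $F_i^{(m)}\in\nH^{X,d_m}$ with uniform $L_{2n}$ bounds and passing products to the limit via H\"older. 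This can be made to work (each $M(E_i)$ does lie in $\bigcap_{p}L_p$ for the sets you allow), but it is substantially more machinery than necessary. The paper instead invokes Lemma~\ref{lemma:composition_and_Tg}, which states that $T_g f(F_1,\dots,F_n)=f(T_gF_1,\dots,T_gF_n)$ for any \emph{continuous} $f\colon\R^n\to\R$ and $F_1,\dots,F_n\in L_2(\nF^X)$ with $f(F_1,\dots,F_n)\in L_2(\cF^X)$; applied to $f(x_1,\dots,x_n)=x_1\cdots x_n$ and $F_i=M(E_i)$, with $\prod_i M(E_i)=I_n(f_n)\in L_2$ guaranteed by disjointness, it yields the product identity in one line. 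The proof of that lemma gets by with only $L_2$-approximation in $\nH^X$ and almost surely convergent subsequences, because the $\psi_L$-truncation is applied to the \emph{composed} function rather than to the factors; this is what lets you avoid higher moments, the uniform $L_{2n}$ bounds, and the delicate requirement that your approximants be simultaneously localized to the increment families of the $A_i$ (which, as stated, would also force you to restrict to pairwise disjoint $A_i$ rather than merely disjoint $E_i$). So: the skeleton and the two key single-factor lemmas are the same as in the paper; the missing ingredient in your write-up is the truncation-first composition lemma, which collapses your acknowledged ``hard part'' entirely.
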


\begin{proof}
As all linear combinations of
\[   f_n((s_1,x_1),\ldots,(s_n,x_n))
   = \cf_{(a_1,b_1]\times E_1}(s_1,x_1)\cdot\ldots\cdot \cf_{(a_n,b_n]\times E_n}(s_n,x_n), \]
where the $(a_1,b_1],\ldots,(a_n,b_n]$ are dyadic and pair-wise disjoint and the $E_i$ are of form
$E_i=(c_i,d_i)$ with $c_id_i>0$ or $E_i=\{0\}$, are dense in $L_2^n$, and therefore the symmetrizations $\tilde f_n$ are dense in
$\tilde L_2^n$, it suffices to show that
$\nJ S_{g^{-1}} (0,\ldots,0,\tilde f_n,0,\dots) =T_g \nJ (0,\dots,0,\tilde f_n,0,\ldots)$ for all $n\in \NN$.
For this it is sufficient to check that 
$I_n S_{g^{-1},n} f_n = T_g I_n f_n$, which follows from Lemmas 
\ref{lemma:permpoisson},
\ref{lemma:permbrownian}, and
\ref{lemma:composition_and_Tg},
where we use that the sets $g((a_i,b_i])$ are pair-wise disjoint as well. 
\end{proof}

\begin{rem}
There are formulas, the Stroock formulas, to compute the kernels $(f_n)_{n=0}^\infty$ from the
chaos expansion $F=\sum_{n=0}^\infty I_n(f_n)$ as expected values of iterative applications
of differential and difference operators to $F$ (cf.\ \cite[Theorem 3.3]{eddahbi:sole:vives:2005} and 
\cite[Theorem 1.3]{last:penrose:11}). This might be used in the proof of Theorem \ref{thm:SeqT} as well.
Our approach is slightly more direct and self-contained, 
and shows in a way that the invariance properties, we consider, are not intrinsically connected to differentiability.
On the other hand, the Stroock formulas might open the way to use the results of this article
to link structural properties of $F$ to structural properties of the Malliavin derivatives of $F$.
Lemma \ref{lemma:invariance_integrand} below goes in this direction.
\end{rem}


\section{Invariances for L\'evy processes}
\label{sec:invariances_levy_processes}

Throughout this section we let $\GG\subseteq \mpmd$ be a subgroup of the group of dyadic measure preserving maps. 
For $n\in \N$ we derive the group $\GG[n]$ of the measure-preserving $\big((0,1]\times \RR\big)^n$-automorphisms 
\begin{equation}\label{eqn:definition:g[n]}
g[n]: \big((t_1,x_1),\ldots,(t_n,x_n) \big) \mapsto \big ( (g(t_1),x_1),\ldots,(g(t_n),x_n)\big) \sptext{.5}{with}{.5} g\in\GG.
\end{equation}
\smallskip
Now we introduce the main concepts of invariance we are interested in.

\begin{defin}
\begin{enumerate}
\item {\bf $\bH_\G$-invariance}.
      An $F\in L_2(\cF^X)$ is $\G$-invariant if $T_g F=F$ a.s.\ for all $g\in \G$. The set of all $\G$-invariant (equivalence classes of) 
      random variables is denoted by $\bH_\G$.

\item {\bf $\nH_\G$-measurability}.
      A symmetric chaos kernel $f_n\colon((0,1]\times\R)^n\to \R$ is $\G[n]$-invariant if 
      $f_n=f_n \circ g[n]$ a.e.\ for all $g\in \G$. We let
	\[
	\nH_\G:=\s \big( I_n(f_n):f_n \mbox{ is } \G[n]\text{-invariant},\, n\in \NN\big)\vee \nN
	\sptext{.7}{with}{.7}
	\nN:=\{ A \in \cF^X : \P(A)=0\}. \]
\item {\bf $\GG$-invariant chaos expansion}.
       An $F\in L_2(\nF^X)$ has a $\GG$-invariant chaos expansion if all chaos kernels 
      $f_n$ are symmetric and $\G[n]$-invariant. 
\end{enumerate}
\end{defin}
\smallskip
The definition of $\nH_\G$ can understood in the way that we take particular representatives of $I_n(f_n)$ to define the $\sigma$-algebra. By adding 
the null-sets, all representatives become measurable with respect to $\nH_\G$.
The next theorem is the main result of this section:

\begin{thm}\label{thm:main2}
For a group of dyadic measure preserving maps $\G\subseteq \mpmd$ and $F\in L_2(\nF^X)$
the following assertions are equivalent:
\begin{enumerate}
\item\label{it:main2:finv} $F\in\bH_\G$.
\item\label{it:main2:fmeas} $F$ is measurable with respect to $\nH_{\G}$.
\item\label{it:main2:fcompletered} $F$ has a $\GG$-invariant chaos expansion.
\item\label{it:main2:fconstant}  $F$ has symmetric chaos kernels $f_n$, $n\in \NN$, which are constant on the orbits of $\GG[n]$
      on $((0,1]\times\R)^n$.
\end{enumerate}
\end{thm}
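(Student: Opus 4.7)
The natural engine is the commutativity
$T_g\circ\nJ=\nJ\circ S_{g^{-1}}$
of Theorem~\ref{thm:SeqT}, together with the fact that $\G\subseteq\mpmd$ is countable ($\mpmd$ being a countable union of finite symmetric groups). I would arrange the proof in the cycle (1)$\Leftrightarrow$(3)$\Leftrightarrow$(4), (3)$\Rightarrow$(2), (2)$\Rightarrow$(1).

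For (1)$\Leftrightarrow$(3), write $F=\sum_{n\ge 0} I_n(f_n)$ with symmetric $f_n$. The diagram yields $T_gF=\sum_{n\ge 0} I_n(S_{g^{-1},n}f_n)$, and because the symmetric chaos kernels are uniquely determined, $T_gF=F$ a.s.\ for every $g\in\G$ is equivalent to $S_{g^{-1},n}f_n=f_n$ in $\wt L_2^n$ for every $n$ and every $g\in\G$. Since $\G$ is a group, substituting $g^{-1}$ for $g$ rewrites this as $f_n=f_n\circ g[n]$ a.e.\ for all $g\in\G$, which is condition (3).

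The equivalence (3)$\Leftrightarrow$(4) is the passage from a.e.-invariance under countably many maps to pointwise invariance on a representative. Intersecting the conull equality sets over the countable family $\G$ yields a single conull set $N$ on which $f_n$ is simultaneously $g[n]$-invariant for every $g\in\G$. Using the countability of $\G$ once more, one forms the $\G[n]$-saturation of $N$ inside a measurable envelope (by walking through an enumeration of $\G$), and obtains a measurable representative that is literally constant on orbits of $\G[n]$ and a.e.\ equal to $f_n$. This is exactly the content of the general invariant-sets material recalled in Appendix~\ref{app:invariant_sets}, from which (4) (and hence back (3)) follows.

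The implication (3)$\Rightarrow$(2) is immediate: each $I_n(f_n)$ with symmetric $\G[n]$-invariant $f_n$ is by the very definition of $\nH_\G$ measurable with respect to $\nH_\G$; since $\nH_\G$ contains $\nN$ and $L_2$-limits preserve $\nH_\G$-measurability, the convergence of partial chaos sums delivers $F\in L_2(\Omega,\nH_\G,\P)$.

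The principal obstacle is (2)$\Rightarrow$(1), where one must upgrade $\sigma$-algebra membership to the pointwise identity $T_gF=F$. The resolution is that $T_g$, though defined initially as an $L_2$-isometry, is multiplicative on the algebra $\nH^X$---directly from the defining formula $T_gf\big(X_{I_1^d},\dots,X_{I_{2^d}^d}\big)=f\big(X_{g(I_1^d)},\dots,X_{g(I_{2^d}^d)}\big)$---and by the density of $\nH^X$ in $L_2(\cF^X)$ arises as the pullback by a measure-preserving automorphism of $(\Omega,\cF^X,\P)$ modulo $\nN$. The $\sigma$-algebra generated (together with $\nN$) by $T_g$-fixed random variables therefore consists of $T_g$-invariant sets modulo $\nN$, and every $\nH_\G$-measurable $F$ satisfies $T_gF=F$ a.s.\ for each $g\in\G$. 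I expect this transfer from multiplicativity to pointwise $T_g$-invariance to be the main technical burden of the proof, and it is presumably the content of the referenced Lemma~\ref{lem:equiinv}.
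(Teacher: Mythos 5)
Your architecture coincides with the paper's for most of the cycle: (1)$\Leftrightarrow$(3) via Theorem~\ref{thm:SeqT} and uniqueness of symmetric kernels, (3)$\Leftrightarrow$(4) via the countability of $\G\subseteq\mpmd$ and the invariant-sets material of Appendix~\ref{app:invariant_sets} (this is Lemma~\ref{lemma:invariant_with_measure}(2) in one direction and Lemma~\ref{lem:equiinv} in the other; note only that after cutting $f_n$ down to the conull invariant set you must re-symmetrize, which is harmless because the coordinate permutations commute with the diagonal action $g[n]$), and (3)$\Rightarrow$(2) by definition. Up to there your proposal is correct and essentially the paper's proof.

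The gap is in (2)$\Rightarrow$(1), which you correctly identify as the main burden but then resolve by assertion. You claim that multiplicativity of $T_g$ on $\nH^X$ plus density implies that $T_g$ ``arises as the pullback by a measure-preserving automorphism of $(\Omega,\cF^X,\P)$ modulo $\nN$,'' and that this is ``presumably the content of Lemma~\ref{lem:equiinv}.'' It is not: Lemma~\ref{lem:equiinv} concerns invariant $\sigma$-algebras for a group acting on the kernel space $(S,\Sigma)$ and plays no role on the $\Omega$-side. The step you actually need is that $T_g$ commutes with composition by Borel functions, i.e.\ $T_g\f(F_1,\dots,F_n)=\f(T_gF_1,\dots,T_gF_n)$ a.s.\ for Borel $\f$ with $\f(F_1,\dots,F_n)\in L_2(\nF^X)$; passing from a multiplicative isometry on the dense subalgebra $\nH^X$ to a point realization (or even just to a $\sigma$-homomorphism of the measure algebra) is a genuine theorem --- it requires, e.g., von Neumann's point-realization on a standard space, or a monotone-class construction showing $T_g$ maps indicators to indicators --- and none of this is carried out. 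The paper supplies exactly this ingredient in a different package: Lemma~\ref{lemma:composition_and_Tg} proves the composition identity for continuous $f$ by approximation through $\nH^X$, Lemma~\ref{lemma:borelstable} upgrades it to Borel $\f$ by truncation, simple functions and outer regularity, and then (2)$\Rightarrow$(1) follows by martingale convergence and Doob factorization of $\E(\cf_A\mid\nG_n)$ through finitely many invariant generators $I_{i_k}(f_{i_k})$. Your route could be completed along the lines you indicate, but as written the decisive step is neither proved nor correctly sourced.
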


\begin{defin}
If $F\in L_2(\cF^X)$ satisfies one of the conditions of Theorem \ref{thm:main2}, then we will say that $F$ is $\GG$-invariant.
\end{defin}

In order to prove Theorem \ref{thm:main2} we start with the following lemma, which is verified in Appendix \ref{section:technical_proofs}.

\begin{lemma}\label{lemma:borelstable}
Let $F_1,\ldots,F_n\in \bH_\GG$ and $\f\colon \RR^n\arre \RR$ be Borel measurable with 
$\f(F_1,\ldots,F_n)\in L_2(\nF^ X)$. Then, $\f(F_1,\ldots,F_n)\in \bH_\G$.
\end{lemma}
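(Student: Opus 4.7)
The plan is to establish the intertwining relation
\[
T_g\,\varphi(G_1,\dots,G_n) = \varphi(T_g G_1,\dots,T_g G_n) \quad \text{a.s.}
\]
for every $g\in\GG$, every $G_1,\dots,G_n\in L_2(\cF^X)$, and every bounded Borel measurable $\varphi\colon\RR^n\to\RR$, and then reduce the lemma to this relation by a truncation argument.

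For the intertwining relation I would proceed in three layers. First, when $G_i = f_i(X_{I_1^d},\dots,X_{I_{2^d}^d})\in\nH^{X,d}$ for a common $d\geq\deg(g)$ and $\varphi\in\nC_b(\RR^n)$, the identity is immediate from the definition of $T_g$, since both sides equal $(\varphi\circ(f_1,\dots,f_n))(X_{g(I_1^d)},\dots,X_{g(I_{2^d}^d)})$. Second, I extend to arbitrary $G_i\in L_2(\cF^X)$ with $\varphi$ still bounded continuous: approximate each $G_i$ in $L_2$ by elements of a common $\nH^{X,d_k}$ (using $\nH^{X,d}\subseteq\nH^{X,d+1}$), pass to an a.s.\ subsequence, and combine bounded dominated convergence for $\varphi$ with the $L_2$-isometry of $T_g$ from Lemma \ref{lemma:properties_Tg}(3). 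Third, I extend to bounded Borel $\varphi$ by the functional monotone class theorem: the collection of bounded Borel $\varphi$ satisfying the intertwining relation forms a vector space containing $\nC_b(\RR^n)$ that is closed under bounded pointwise limits (invoking dominated convergence and the isometry of $T_g$ on both sides), hence contains every bounded Borel function.

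With the intertwining relation in hand for bounded Borel $\varphi$, the hypothesis $T_g F_i = F_i$ a.s.\ for each $i$ immediately yields $T_g\,\varphi_L(F_1,\dots,F_n) = \varphi_L(F_1,\dots,F_n)$ a.s., where $\varphi_L := \psi_L\circ\varphi$ is the truncated version. Hence $\varphi_L(F_1,\dots,F_n)\in\bH_\GG$ for every $L\geq 1$. Since $\varphi_L\to\varphi$ pointwise and $|\varphi_L(F_1,\dots,F_n)|\leq|\varphi(F_1,\dots,F_n)|\in L_2$, dominated convergence gives $\varphi_L(F_1,\dots,F_n)\to\varphi(F_1,\dots,F_n)$ in $L_2(\cF^X)$. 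Because $\bH_\GG = \bigcap_{g\in\GG}\ker(T_g-I)$ is a closed subspace of $L_2(\cF^X)$, the limit $\varphi(F_1,\dots,F_n)$ lies in $\bH_\GG$, which concludes the argument.

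The main obstacle is the passage from the concrete definition of $T_g$ on $\nH^X$, where it acts as a literal reshuffling of increments, to the intertwining identity on all of $L_2(\cF^X)$, where $T_g$ is only known as the isometric extension. One has to choose representatives carefully and rely on almost-sure subsequential convergence throughout, since the identity is an a.s.\ identity between equivalence classes. Once the bounded continuous case is secured via $L_2$-approximation, the monotone class extension to bounded Borel $\varphi$ and the final $L_2$-truncation step are routine.
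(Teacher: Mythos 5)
Your proof is correct, and its skeleton matches the paper's: everything rests on the continuous case of the intertwining relation, which is precisely Lemma \ref{lemma:composition_and_Tg} of the paper (proved there by the same $\nH^X$-approximation and a.s.\ subsequence argument you describe in your first two layers), followed by an extension to bounded Borel functions and a final truncation step exploiting that $\bH_\GG=\bigcap_{g\in\GG}\ker(T_g-\mathrm{id})$ is closed. The one point where you genuinely diverge is the passage from continuous to bounded Borel $\f$. The paper reduces a bounded Borel $\f$ to simple functions, hence to indicators $\cf_B$, and then approximates $\cf_B(F_1,\ldots,F_n)$ in $L_2$ by $\f_j(F_1,\ldots,F_n)$ with $\f_j\in\nC_b(\R^n)$ using outer regularity of the law of $(F_1,\ldots,F_n)$; in doing so it only ever propagates the \emph{invariance} conclusion $T_gG=G$ through the approximation. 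You instead extend the full operator identity $T_g\,\f(G_1,\ldots,G_n)=\f(T_gG_1,\ldots,T_gG_n)$ to all bounded Borel $\f$ and arbitrary (not necessarily invariant) $G_i\in L_2(\nF^X)$ via the functional monotone class theorem; the required closure under uniformly bounded pointwise limits does hold, since dominated convergence gives $L_2$-convergence of both $\f_k(G_1,\ldots,G_n)$ and $\f_k(T_gG_1,\ldots,T_gG_n)$, and the isometry $T_g$ preserves the former limit. Both routes are valid; yours proves a strictly stronger intermediate statement (a Borel version of Lemma \ref{lemma:composition_and_Tg}) and avoids any appeal to regularity of the law, while the paper's argument stays with the minimal statement needed for the lemma.
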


\begin{proof}[Proof of Theorem \ref{thm:main2}]
(1) $\Longleftrightarrow$ (3) follows from Theorem \ref{thm:SeqT} and the uniqueness of symmetric kernels in the chaos expansion.

(3) $\Longrightarrow$ (2) follows by definition and the completeness of $(\Omega,\nH_\GG,\P)$.

(4) $\Longrightarrow$ (3) is a consequence of Lemma \ref{lem:equiinv}.

(3) $\Longrightarrow$ (4) First we use Lemma \ref{lemma:invariant_with_measure} to obtain a chaos kernel that is constant on the orbits. 
This new kernel will be  symmetrized which keeps the property that the kernel is constant on the orbits.

(2) $\Longrightarrow$ (1)
As  $\bH_\G$ is a closed subspace of $L_2(\cF^X)$, it is sufficient to check that $\cf_A\in \bH_\G$ for all $A\in \nH_\G$. 
Here it is sufficient to take $A$ such that there exists a sequence $(I_{i_k}(f_{i_k}))_{k\in \NN}$ with $(i_k)_{k\in \NN} \subseteq \NN$
and $\G[i_k]$-invariant kernels $f_{i_k}$ such that
\[ A\in \nG:=\s\big(I_{i_k}(f_{i_k}):k\in \NN\big). \]
By martingale convergence, $\cf_A$ can be approximated in $L_2$ by $\nG_n$-measurable functions, where
\[ \nG_n:=\s\big(I_{i_k}(f_{i_k}): k\in \{1,\ldots,n\}\big). \]
By Doob's factorization lemma (cf.\ \cite[Lemma~II.11.7]{bauer:01}), there are Borel functions $\f_n\colon \RR^n\arre \RR$ such that
\begin{align*}
\E(\cf_A|\nG_n) = \f_n\big(I_{i_1}(f_{i_1}),\ldots,I_{i_n}(f_{i_n})\big) \mbox{ a.s.},
\end{align*}
so that
\[ \lim_n \E | \cf_A - \f_n\big(I_{i_1}(f_{i_1}),\ldots,I_{i_n}(f_{i_n})\big) |^2 = 0. \]
Because of the equivalence (1) $\Longleftrightarrow$ (3) we have that $I_{i_k}(f_{i_k})\in \bH_\G$ for all $k\in \NN$ and
Lemma~\ref{lemma:borelstable} implies that 
\[ \f_n\big(I_{i_1}(f_{i_1}),\ldots,I_{i_n}(f_{i_n}))\in \bH_\G. \]
Because $\bH_\G$ is closed in $L_2(\cF^X)$, we derive that $\cf_A\in \bH_\G$.
\end{proof}


\section{Diagonal groups and locally ergodic sets}
\label{sec:diagonal_groups_and_invariant_sets}

Let $(T,\cT,\tau,(\cT_N)_{N=0}^\infty)$ be a filtered probability space such that 
there are refining partitions 
\[ T= T_{N,1}\cup\cdots\cup T_{N,L_N}, \quad N=0,1,2,\ldots, \]
satisfying the following assumptions:
      \begin{enumerate}
      \item $\cT_N = \sigma ( T_{N,1},\ldots,T_{N,L_N} )$,
      \item $\tau(T_{N,l})>0$ for all $(N,l)$,
      \item $\lim_{N\to \infty} \sup_{l=1,\ldots,L_N} \tau (T_{N,l}) = 0$,
      \item $\cT = \bigvee_{N=0}^\infty \cT_N$.
      \end{enumerate}
We let $\nO(T)$ be the system of countable unions of elements from $\bigcup_{N=0}^\infty \nT_N$ (including the empty 
set). The system forms a topology, in particular a set $G\subseteq T$ is open provided that it is empty or for each $x\in G$ there
is a $T_{N,l}$ with $x\in T_{N,l} \subseteq G$.
\medskip

Finally, we suppose that there is a {\em countable} group $\GG$ of bijective bi-measurable 
$g\colon T\to T$.
\medskip

\begin{defin}\label{definition:locallyergodic_new}
\begin{enumerate}
\item A set $E\subseteq T$ of positive measure is called {\em finite locally ergodic} with respect 
      to $\GG$ provided that there is an $N_E\ge 0$ such that $E\in \cT_{N_E}$ and 
      for all $A:=T_{N,l}\cup T_{N,m} \subseteq E$ with $l\not = m$ and $N\ge N_E$ there is a subgroup 
      $\HH \subseteq \GG$ such that 
      \begin{enumerate}
      \item $g|_{A^c} = \id_{A^c}$ for all $g\in \HH$,
      \item the probability space $(A,\nI(\HH|_A),\tau_A)$ is trivial, i.e.\ contains only sets of measure
            one or zero,  where $\HH|_A$ is the restriction of $\HH$ to $A$ and $\tau_A$ the normalized 
            restriction of $\tau$ to $A$.
      \end{enumerate}
\item A set $E\subseteq T$ is called {\em locally ergodic} with respect to $\GG$ provided that 
      there is a sequence $E^j$ of finite locally ergodic sets with respect to $\GG$ such that
      \[ E^1 \subseteq E^2 \subseteq  \cdots \subseteq E
         \sptext{1}{and}{1} E=\bigcup_{j=1}^\infty E^j. \]
\end{enumerate}
\end{defin}

\begin{rem}
\begin{enumerate}
\item By definition, locally ergodic sets belong to $\nO(T)$.
\item The local ergodicity is stable with respect to passing to open subsets:
      If $\emptyset \not = F\subseteq E$, where $F\in \nO(T)$ and where $E$ is locally ergodic, then $F$ is locally ergodic.
\end{enumerate}
\end{rem}

\begin{proof}
Let us check (2).
By definition, we find finite locally ergodic sets such that
      \[ E^1 \subseteq E^2 \subseteq  \cdots \subseteq E
         \sptext{1}{and}{1} E=\bigcup_{j=1}^\infty E^j. \]
At the same time we find an increasing sequence $F^j\in\nT_{N_j}$, $j\in \NN$, such that
$F=\bigcup_{j=1}^\infty F^j$.
One obtains
\[ F = F \cap E = \bigcup_{j=1}^\infty (F^j\cap E^j) \]
and that $F^j\cap E^j$ is finite locally ergodic because $E^j$ is of this type and $F^j\cap E^j\subseteq E^j$.
\end{proof}

Now we define our {\em diagonal group}: We fix $n\in \NN$ and consider an auxiliary $\sigma$-finite 
measure space $(R,\nR,\rho)$ with $\rho(R)>0$ and the group $\GG[n]$ that consists of all maps 
$g[n]\colon (T\times R)^n \to (T\times R)^n $ given by
\[ \big((t_1,x_1),\ldots,(t_n,x_n)\big) \to \big((g(t_1),x_1),\ldots,(g(t_n),x_n)\big)
   \sptext{1}{with}{1} g\in \GG. \]
\medskip

To formulate our main result, we recall that $\nI(\GG[n])$ denotes the invariant $\sigma$-algebra with
respect to the group $\GG[n]$, see Definition \ref{definition:invariant_sigma_algebra} below. 
For $A\in\cT$ the trace-$\sigma$-algebra on $A$ is denoted by $\cT|_A$.
\medskip

\begin{thm}\label{thm:product_of_locally_ergodic_sets_new}
Let $n\in \NN$, $E_1,\ldots,E_L\in \cT$ be pairwise disjoint and locally ergodic with respect to 
$\GG$,  
\[ \cT_E := \cT|_{T\setminus \left ( \bigcup_{l=1}^L E_l \right )} \vee \s(E_1,\ldots,E_L),
   \sptext{1}{and}{1}
   \cN_n:= \{ A \in (\cT\otimes \cR)^{\otimes n} : (\tau\otimes \rho)^{\otimes n}(A) = 0 \}.\] 
Then $\nI(\GG[n]) \subseteq  (\cT_E\otimes \nR)^{\otimes n}\vee \cN_n$.
\end{thm}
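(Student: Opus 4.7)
The plan is, for a fixed $B\in\nI(\GG[n])$, to show that $B$ coincides modulo $\cN_n$ with a set in $(\cT_E\otimes\cR)^{\otimes n}$. Writing $F_0:=T\setminus\bigcup_{l=1}^L E_l$ and $F_l:=E_l$ for $l\ge 1$ partitions $T$ and thus $(T\times R)^n$ into the cells $\prod_{i=1}^n(F_{l_i}\times R)$ indexed by $(l_1,\ldots,l_n)\in\{0,1,\ldots,L\}^n$. Every cell lies in $(\cT_E\otimes\cR)^{\otimes n}$, and on such a cell the target $\sigma$-algebra retains full information on the $(t_i,x_i)$ with $l_i=0$ and on every $x_i$, but collapses the $T$-variable whenever $l_i\neq 0$. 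By Fubini it thus suffices to show that for $(\tau\otimes\rho)^{\otimes n}$-almost every choice of the parameters $(t_i,x_i)_{l_i=0}$ and $(x_i)_{i:l_i\neq 0}$, the indicator $\cf_B$ is almost everywhere constant in the remaining coordinates $(t_i)_{l_i\neq 0}\in\prod_{l_i\neq 0} E_{l_i}$.

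I would reduce this to finite sub-cells $\prod_{l_i\neq 0}E_{l_i}^{(j_{l_i})}$ by using the local-ergodicity exhaustion $E_l=\bigcup_j E_l^{(j)}$ with $E_l^{(j)}\in\cT_{N_l^{(j)}}$ finite locally ergodic. Choose the refinement level $M$ so large that every $E_l^{(j_l)}$ contains at least $2|S_l|$ blocks of $\cT_M$, where $S_l:=\{i:l_i=l\}$. The sub-cell decomposes into products $\prod_i T_{M,\alpha(i)}$ of blocks; call the tuple $\alpha$ \emph{good} when the $\alpha(i)$'s are pairwise distinct, \emph{bad} otherwise. The total mass of the bad tuples is $\mathcal{O}(\sup_k\tau(T_{M,k}))$ and hence vanishes as $M\to\infty$, so the bad part will be absorbed by $\cN_n$.

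On a good tuple I would pair every $T_{M,\alpha(i)}$ with a companion block $T_{M,\beta(i)}$ inside the same $E_{l_i}^{(j_{l_i})}$ so that the two-block sets $A_i:=T_{M,\alpha(i)}\cup T_{M,\beta(i)}$ are pairwise disjoint, which the block-count condition permits. Finite local ergodicity furnishes subgroups $\HH_i\subseteq\GG$ that act ergodically on $A_i$ and fix $A_i^c$ pointwise. Because the $A_i$ are pairwise disjoint and every 0-coordinate $t_j$ sits in $F_0\subseteq A_i^c$, the diagonal action of $h\in\HH_i$ on the restriction to $\prod_i A_i$ (with the fixed parameters) moves only the $i$-th $T$-entry. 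The $\GG[n]$-invariance of $B$ then forces $\cf_B$ to be $\HH_i$-invariant, and ergodicity yields a.e.\ constancy in that coordinate; iterating over $i$ shows $\cf_B$ is a.e.\ equal on $\prod_i A_i$ to some value $c_\alpha\in\{0,1\}$.

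It remains to identify the $c_\alpha$ for different good $\alpha$. Any two good tuples are joined by a chain of single-coordinate moves through good intermediates, using spare blocks as temporary parking whenever a direct swap would collide; a move in coordinate $i$ from $\alpha$ to $\alpha^{(1)}$ fits into the pair $A_i:=T_{M,\alpha(i)}\cup T_{M,\alpha^{(1)}(i)}$ together with disjoint companion pairs for the other entries, so the previous paragraph already forces $c_\alpha=c_{\alpha^{(1)}}$. Letting $M\to\infty$ and then exhausting by $j_l\to\infty$ via $E_l^{(j)}\subseteq E_l^{(j+1)}$ shows that on the cell $\cf_B$ is $(\tau\otimes\rho)^{\otimes n}$-a.e.\ a function of $(t_i,x_i)_{l_i=0}$ and $(x_i)_{i:l_i\neq 0}$ alone, which is the desired $(\cT_E\otimes\cR)^{\otimes n}\vee\cN_n$-measurability. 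The main technical obstacle is the combinatorial bookkeeping: one has to guarantee, uniformly for large $M$, that the disjoint two-block covering and the connectivity of the good-configuration graph can always be arranged, and then verify that the double limit in $M$ and $j_l$ genuinely respects the $\sigma$-finite product measure and the null class $\cN_n$.
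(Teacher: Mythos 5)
Your proposal is correct in substance and rests on exactly the same mechanism as the paper's proof: the two-block subgroups $\HH$ supplied by finite local ergodicity are applied to one-coordinate sections of the invariant set $B$ (the diagonal action fixes every other coordinate, so the section restricted to the two-block set $A$ is $\HH|_A$-invariant and hence of measure zero or one), and the degenerate configurations where two time-coordinates share a block are discarded because their total mass is $O\bigl(\sup_k \tau(T_{M,k})\bigr)\to 0$. Where you differ is the packaging. You argue pointwise: Fubini reduces the claim to a.e.\ constancy of sections of $\cf_B$ over products of the $E_l$; you get constancy on each ``good'' product of blocks by iterating the ergodic subgroups over disjoint two-block sets; and you must then identify the constants $c_\alpha$ across good configurations by a chain of single-coordinate moves --- this connectivity argument, together with the double limit in $M$ and $j$, is precisely the bookkeeping you flag as the main obstacle. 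The paper instead conditions $\cf_B$ on the grid $\sigma$-algebras $(\cT_N\otimes\cR)^{\otimes n}$ and works with the averaged functions $f_N$: the same two-block argument shows $f_N$ takes equal values on cuboids related by a one-index exchange, hence $f_N$ is measurable with respect to a decreasing sequence of $\sigma$-algebras $\cG_N$ differing from $(\cT_E\otimes\cR)^{\otimes n}$ only on the diagonal $\Delta_N$, and martingale convergence plus a short lemma (that $\bigcap_N(\cG_N\vee\cN)\subseteq\cI\vee\cN$ when the perturbing sets shrink to measure zero) disposes of both the $M$-limit and the exhaustion by finite locally ergodic sets in one stroke. The connectivity of the configuration graph is needed in both arguments (the paper leaves it implicit; it is automatic once each $E_l$ contains more blocks than coordinates mapped into it, which your count $2|S_l|$ also guarantees), but the conditional-expectation formulation spares you the explicit matching of constants and the hands-on treatment of the two limits. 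Your route works; if you write it out in full, the cleanest way to discharge the obstacles you name is to adopt the paper's device of replacing $\cf_B$ by its conditional expectations along the grid filtration.
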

\medskip

\begin{lemma}\label{lemma:small_diagonal_new}
Assume a probability space $(M,\cM,m)$, a decreasing sequence of measurable 
sets $D_0\supseteq D_1 \supseteq \ldots $, a sub-$\sigma$-algebra
$\nI\subseteq \cM$  and
\[ \cG_N := \nI \vee \sigma( A_N\in \cM \mbox{ with } A_N \subseteq D_N). \]
Assume that $m(D_N)\to 0$ as $N\to \infty$. Then
\[ \bigcap_{N=0}^\infty (\cG_N\vee \cN) \subseteq \cI \vee \cN
   \sptext{1}{with}{1}
   \cN:= \{ A\in \cM : m(A)=0\}. \]
\end{lemma}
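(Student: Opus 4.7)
The plan is to show that every $B \in \bigcap_{N=0}^\infty (\cG_N \vee \cN)$ agrees up to a null set with an $\cI$-measurable set. The driving observation is that, restricted to $D_N^c$, the $\s$-algebra $\cG_N$ collapses to $\cI$; this lets us approximate $B$ arbitrarily well in measure by elements of $\cI$ and then pass to a limit.

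First I would determine $\s(\mathcal{A}_N)$ for $\mathcal{A}_N := \{A\in \cM : A\subseteq D_N\}$. A direct verification that
\[ \{A\in \cM : A\subseteq D_N \text{ or } A\supseteq D_N^c\} \]
is itself a $\s$-algebra identifies $\s(\mathcal{A}_N)$ with this family, so its trace on $D_N^c$ is the trivial $\s$-algebra $\{\emptyset, D_N^c\}$. A standard good-set argument then upgrades this to $\cG_N|_{D_N^c} = \cI|_{D_N^c}$: the class $\mathcal{J}_N := \{G\in \cM : G\cap D_N^c \in \cI|_{D_N^c}\}$ is a $\s$-algebra containing both $\cI$ and $\s(\mathcal{A}_N)$, hence contains $\cG_N$.

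With this trace identity in hand, I fix $B \in \bigcap_N (\cG_N\vee \cN)$ and for each $N$ choose $B_N\in \cG_N$ with $m(B\Delta B_N)=0$. The trace identity produces $C_N\in \cI$ with $B_N\cap D_N^c = C_N \cap D_N^c$, so $B_N \Delta C_N \subseteq D_N$, and therefore
\[ m(B\Delta C_N) \le m(B\Delta B_N) + m(B_N\Delta C_N) \le m(D_N) \longrightarrow 0. \]
Hence $\cf_{C_N} \to \cf_B$ in $L_1(m)$. Extracting a subsequence $(C_{N_k})$ with pointwise a.s.\ convergence and setting $C := \limsup_k C_{N_k}\in \cI$, one obtains $\cf_C = \cf_B$ a.s., so $B\Delta C \in \cN$ and $B\in \cI\vee \cN$, which is the claimed inclusion.

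The main obstacle is the trace identification $\cG_N|_{D_N^c} = \cI|_{D_N^c}$; the explicit description of $\s(\mathcal{A}_N)$ and the good-set step carry the whole argument. Once they are established, the $L_1$-approximation together with the standard subsequence extraction is routine.
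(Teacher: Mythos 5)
Your proof is correct and follows essentially the same route as the paper's: both rest on the observation that $\cG_N$ restricted to $D_N^c$ collapses to $\cI$, which puts $B$ within measure $m(D_N)$ of an $\cI$-set, and then pass to a set-theoretic limit in $\cI$. The only cosmetic differences are that you package the key step as a trace-$\sigma$-algebra identity proved by a good-set argument (where the paper writes out the elements of $\cG_N$ explicitly as $(I_N\cap D_N^c)\cup A_N$) and that you take a $\limsup$ along an a.s.\ convergent subsequence where the paper uses $\liminf_N I_N$ of the full sequence together with the monotonicity of the $D_N$.
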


\begin{proof}
The $\sigma$-algebra $\cG_N$ consists of all
\[ B_N = (I_N\cap D_N^c) \cup A_N  \]
with $A_N \in  \cM$, $A_N \subseteq D_N$ and $I_N\in \nI$.
Therefore $B\in \bigcap_{N=0}^\infty (\cG_N\vee \nN)$ gives $I_N\in \nI$ and $A_N\in \cM$ with
$A_N\subseteq D_N$ 
such that
\[ B_N :=  (I_N\cap D_N^c) \cup A_N
   \sptext{1}{satisfies}{1}
   B_N \Delta B \in \cN
   \sptext{1}{for all}{1} N\ge 0. \]
Defining $C:= \bigcup_{N=0}^\infty (B_N \Delta B)\in \nN$, this implies {\em on} $C^c$ that
\[ B= B_N =  (I_N\cap D_N^c) \cup A_N. \]
Let
\[ I := \bigcup_{N=0}^\infty \bigcap_{k=N}^\infty I_k\in \nI. \]
By construction, 
$I_N=B_N$ on $D_N^c$ and $D_0^c \subseteq D_1^c \subseteq \cdots$.
Therefore,
$I\Delta B \subseteq D_N \cup C$
which implies
$     \P(I\Delta B)
  \le \lim_N \P(D_N) = 0$ and proves the lemma.
\end{proof} 

\begin{proof}[Proof of Theorem \ref{thm:product_of_locally_ergodic_sets_new}]
We assume a partition $R=\bigcup_{j\in J} R_j$ with $\rho(R_j)\in (0,\infty)$. Choosing $\lambda_j\in (0,\infty)$ we can arrange that 
$\rho^0(A) := \sum_{j\in J} \lambda_j \rho(A\cap R_j)$ 
becomes a probability measure which has a strictly positive density with respect to $\rho$. As our statement only concerns null-sets 
we can replace $\rho$ by $\rho^0$, or we can assume w.l.o.g. that $\rho$ itself is a probability measure.
\medskip

I. First we assume that $E_1,\ldots,E_L$ are finite locally ergodic. Let us fix a set $B\in \nI(\GG[n])$ of positive measure.

(a) We observe that $\bigvee_{N\ge 0} (\cT_N \otimes \cR)^{\otimes n} = (\cT \otimes \cR)^{\otimes n}$,
so that martingale convergence yields
\[   \lim_{N\to \infty} f_N
   = \cf_B \quad
     (\tau\otimes\rho)^{\otimes n}\mbox{-a.s.},
     \]
where,  for $(t_1,\ldots,t_n)\in Q^N_{l_1,\ldots,l_n} := T_{N,l_1}\times \cdots \times T_{N,l_n}$,
\[    f_N((t_1,x_1),\ldots,(t_n,x_n)) 
   := 
      \int_{Q^N_{l_1,\ldots,l_n}}
   \cf_B ((s_1,x_1),\ldots,(s_n,x_n)) \frac{\dd\tau(s_1) \cdots \dd\tau(s_n)}{\tau^{\otimes n}(Q^N_{l_1,\ldots,l_n})}. \]
(b) For $N\ge 0$ we let
\[
   \Delta_N :=\hspace{-0.4cm} \bigcup_{\substack{ l_1,\ldots , l_n\in \{1,\ldots,L_N \} \\   
   \text{at least two $l_k$ coincide}}} 
                 \hspace{-0.4cm} Q^N_{l_1,\ldots,l_n}
\]
which is empty for $n=1$.  For $n\ge 2$ the size of $ \Delta_N$ can be upper bounded by
\[ 
   \tau^{\otimes n} (\Delta_N) \le \binom{n}{2} \max_{l=1,\ldots,N} \tau(T_{N,l}) 
   \sptext{1}{so that}{1}
   \lim_N \tau^{\otimes n}(\Delta_{N}) = 0.
\]
Define
\[ \cG_N := (\cT_E\otimes \cR)^{\otimes n} \vee
            \sigma \bigg ( D \times G : D\in \cT^{\otimes n}, D\subseteq \Delta_N,
                    G\in \cR^{\otimes n}\bigg ) \]
with a slight abuse of notation concerning the order of components, 
which gives the $\sigma$-algebra
$\cT_E\otimes \cR$ in the case $n=1$. As $\Delta_0 \supseteq \Delta_1 \supseteq \cdots$ we have 
$\cG_0 \supseteq \cG_1 \supseteq \cdots$.
\medskip

(c) Let $N_0:= \max \{ N_{E_1},\ldots,N_{E_L} \} \ge 0$, where the $N_{E_l}$ are taken from
 Definition~\ref{definition:locallyergodic_new}~(1).
The main observation of the proof is
that $f_N$ is $\cG_N$-measurable for $N\ge N_0$.
By definition, $f_N$ is constant on all cuboids 
$Q^N_{l_1,\ldots,l_n}$. Assume two cuboids
\[ Q^N_{l_1,    \ldots,l_n} 
   \sptext{1}{and}{1}
   Q^N_{m_1,l_2,\ldots,l_n} \]
such that $(l_1,\ldots,l_n)$  are distinct, $(m_1,l_2,\ldots,l_n)$ are distinct, $l_1\not = m_1$, and
that $T_{N,l_1},T_{N,m_1}\subseteq E_l$, where $l\in \{1,\ldots,L\}$ is now  fixed. By assumption, there is a sub-group 
$\HH$ of $\GG$ such that for $A_l:=T_{N,l_1}\cup  T_{N,m_1}$ the probability space $(A_l,\nI(\HH|_{A_l}),\tau_{A_l})$ is 
trivial and $\HH$ acts as an identity 
outside $A_l$. Because $B\in \nI(\GG[n])$ we have that
\[ \cf_B g[n] = \cf_B \sptext{1}{for all}{1} g\in \GG, \]
so that, for all $g\in \HH$,
\[   \cf_B ((g t_1,x_1),(t_2,x_2),\ldots,(t_n,x_n)) 
   = \cf_B (( t_1,x_1),(t_2,x_2),\ldots,(t_n,x_n))  \]
on 
$(A_l\times R)\times
 (T_{N,l_2} \times R) \times \cdots \times (T_{N,l_n}\times R)$.
This implies that the subset $A_l$ of the section of $B$, taken at 
\begin{equation}\label{eqn:section_of_B}
 (x_1,(t_2,x_2),\ldots,(t_n,x_n))\in  R \times
 (T_{N,l_2} \times R) \times \cdots \times (T_{N,l_n}\times R), 
\end{equation} 
is invariant with respect  to $\HH|_{A_l}$ 
and therefore the function 
\[ t_1 \to\cf_B ((t_1,x_1),(t_2,x_2),\ldots,(t_n,x_n)) \]
is almost surely constant on $A_l$ under the condition \eqref{eqn:section_of_B}. Consequently,
\begin{multline*}
     \int_{Q^N_{l_1,\ldots,l_n}} \cf_A (( t_1,x_1),(t_2,x_2),\ldots,(t_n,x_n)) 
      \frac{\dd\tau(t_1)\cdots \dd\tau(t_n)}{\tau^{\otimes n}(Q^N_{l_1,\ldots,l_n})} \\
   = \int_{Q^N_{m_1,\ldots,l_n} } \cf_A (( t_1,x_1),(t_2,x_2),\ldots,(t_n,x_n)) 
   \frac{\dd\tau(t_1)\cdots \dd\tau(t_n)}{\tau^{\otimes n}(Q^N_{m_1,\ldots,l_n})} 
\end{multline*}
for $(x_1,\ldots,x_n)\in R^n$.  
We can repeat the argument, where we replace the exchange of the first component of the 
cuboid by any other component. This implies that $f_N$ is $\cG_N$-measurable.
\medskip

(d) From (c) we immediately get that 
$f_M$ is $\cG_N$-measurable for $M\ge N\ge N_0$. Therefore 
$\cf_B$ is $\cG_N\vee \cN_n$-measurable for all $N\ge N_0$.
Applying Lemma \ref{lemma:small_diagonal_new} we get that
$\cf_B$ is $(\cT_E\otimes \cR)^{\otimes n} \vee \cN_n$-measurable.
\medskip

II. Now we assume general locally ergodic sets $E_1,\ldots,E_L$. By definition, we find monotone sequences 
of finite locally ergodic sets $(E_l^j)_{j=1}^\infty$ with
\[ \bigcup_{j=1}^\infty E^j_l = E_l. \]
We proved in step I that $\nI(\GG[n]) \subseteq  (\cT_{E^j} \otimes \nR)^{\otimes n}\vee \cN_n$ with
\[ \cT_{E^j} := \cT|_{T\setminus \left ( \bigcup_{l=1}^L E_l^j \right )} \vee \s(E_1^j,\ldots,E_L^j),\] 
so that
\[ \nI(\GG[n]) \subseteq \bigcap_{j=1}^\infty \Big ( (\cT_{E^j} \otimes \nR)^{\otimes n}\vee \cN_n \Big ). \]
Observing
\[   (\cT_{E^j} \otimes \nR)^{\otimes n} 
   \subseteq (\cT_{E}   \otimes \nR)^{\otimes n} \vee \sigma(A^j \in (\cT \otimes \nR)^{\otimes n} : 
                                                               A^j \subseteq D^j ) \]
with
\[ D^j:= \left \{ ((t_1,x_1),\ldots,(t_n,x_n)) \in (T\times R)^n : t_k \in \bigcup_{l=1}^L (E_l\setminus E_l^j)
              \mbox{ for some } k\in \{1,\ldots,n \} \right \}, \]
gives that
\[ \nI(\GG[n]) \subseteq  \bigcap_{j=1}^\infty \Big (
(\cT_{E}   \otimes \nR)^{\otimes n} \vee \sigma(A^j \in (\cT \otimes \nR)^{\otimes n} : 
                                                               A^j \subseteq D^j )\vee \cN_n 
\Big ). \]
Finally, because of $D^1 \supseteq D^2 \supseteq \cdots $ and
\[ (\tau\otimes\rho)^{\otimes n}(D^j) \le n \left [ \sum_{l=1}^L \tau(E_l \setminus E_l^j) \right ]
   \to 0 
   \mbox{ as } j\to \infty, \]              
we can again apply Lemma \ref{lemma:small_diagonal_new}.
\end{proof}


\section{Reduced chaos expansions for L\'evy processes}
\label{sec:reduced_chaos_expansion}

In this section we apply the results from Section \ref{sec:diagonal_groups_and_invariant_sets} to L\'evy processes. For this purpose we let 
\begin{enumerate}
\item $  (T,\cT,\tau) := ((0,1], \cB((0,1]),\lambda)$ with $\cT_N =
      \cF_N^{\rm dyad} :=\sigma \left (\left (  \frac{l-1}{2^N},\frac{l}{2^N} \right ] : l=1,\ldots,2^N\right )$,
\item $\mpmd_{E}:=\big\{g\in \mpmd\colon g|_{E^c}=\id_{E^c}\big \}$ for $E\subseteq (0,1]$,
\item $(R,\cR,\rho):=(\R,\nB(\R),\mu)$,
\item and $\cN_n$ be the null-sets in $((0,1]\times \R)^n$ with respect to $(\lambda\otimes\mu)^{\otimes n}$.
\end{enumerate}
\medskip
Let us begin with a prototype of a locally ergodic set.
\medskip

\begin{lemma}\label{lemma:check_ergodicity}
Let $E\in \nO((0,1])$ be non-empty. Then $E$ is locally ergodic with respect to $\mpmd_E$.
\end{lemma}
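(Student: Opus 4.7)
The plan is to realise $E$ as an increasing union of finite locally ergodic sets. Since $E\in \nO((0,1])$ is a non-empty union of half-open dyadic intervals, I would first define, for each level $N\ge 0$, the set $E^{(N)}\in \cT_N$ consisting of those dyadic intervals $I_k^N$ contained in $E$. The $E^{(N)}$ are clearly increasing in $N$ and exhaust $E$; starting at the smallest $N_0$ with $E^{(N_0)}\neq \emptyset$, I would set
\[ E^j := E^{(N_0+j-1)}, \qquad N_{E^j} := N_0+j-1, \]
so that $E^j\in \cT_{N_{E^j}}$, $\tau(E^j)>0$, $E^1\subseteq E^2\subseteq\cdots$, and $\bigcup_j E^j = E$.

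Next, for each $j$ I would verify finite local ergodicity of $E^j$ with respect to $\mpmd_E$ in the sense of Definition~\ref{definition:locallyergodic_new}(1). Fix $A = I_l^N\cup I_m^N\subseteq E^j$ with $l\neq m$ and $N\ge N_{E^j}$. The natural candidate for the witnessing subgroup is
\[ \HH := \{\, g\in \mpmd \,:\, g|_{A^c}=\id_{A^c}\,\}. \]
Since $A\subseteq E^j\subseteq E$, every such $g$ also restricts to the identity on $E^c$, so $\HH$ is a subgroup of $\mpmd_E$ and condition (a) holds automatically. The feature I will exploit is that at every refinement level $M\ge N$, $\HH$ contains the map $g_\pi$ corresponding to the transposition in $\nS_{2^M}$ that swaps any chosen pair of level-$M$ dyadic intervals $J_1,J_2\subseteq A$ by translation while fixing all other level-$M$ cells.

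The main step is to check condition (b): triviality of $(A,\nI(\HH|_A),\tau_A)$. Given an $\HH$-invariant $B\subseteq A$, set $c:=\tau(B)/\tau(A)$. For any level $M\ge N$ and any two level-$M$ dyadic subintervals $J_1,J_2\subseteq A$, the associated swap $g\in\HH$ satisfies $g(B\cap J_1)=B\cap J_2$, so measure preservation gives $\tau(B\cap J_1)=\tau(B\cap J_2)$; since the level-$M$ dyadic intervals partition $A$ into equally large cells, each such cell $J$ must carry $\tau(B\cap J)=c\,\tau(J)$. As these cells form a $\pi$-system generating $\cB(A)$, the finite measures $C\mapsto\tau(B\cap C)$ and $c\,\tau$ coincide on $\cB(A)$; taking $C=A\setminus B$ yields $0=c\,\tau(A\setminus B)$, which forces $c\in\{0,1\}$. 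Hence every $\HH$-invariant $B\subseteq A$ is $\tau_A$-trivial.

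Putting the pieces together, each $E^j$ is finite locally ergodic with respect to $\mpmd_E$, the sets are nested and their union is $E$, so Definition~\ref{definition:locallyergodic_new}(2) gives the claim. The only mildly delicate ingredient is the triviality of the invariant $\sigma$-algebra on $A$; everything else is bookkeeping from the definitions.
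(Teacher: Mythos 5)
Your proof is correct and follows essentially the same route as the paper: both reduce the statement to showing that the invariant $\sigma$-algebra of $\mpmd_A|_A$ on a two-cell set $A\subseteq E$ is trivial, using that the dyadic swaps in $\mpmd_A$ force $\tau(B\cap J)$ to be proportional to $\tau(J)$ on every dyadic cell $J\subseteq A$ at every level. The only (interchangeable) difference is the final step --- the paper concludes via martingale convergence along the dyadic filtration restricted to $A$, whereas you invoke uniqueness of finite measures agreeing on a generating $\pi$-system --- and your explicit exhaustion of $E$ by the sets $E^{(N)}$ is exactly the bookkeeping the paper leaves implicit in ``it is enough to show the following.''
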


\begin{proof}
It is enough to show the following: If $A\in \cF_{N_0}^{\rm dyad}$ is a non-empty subset of $E$, then $(A, \nI(\mpmd_{A}|_A),\lambda_A)$ 
is trivial. Take any $B\in \nI(\mpmd_{A}|_A)$. Using the dyadic filtration restricted to $A$, where 
we start with the level $N_0$, we interpret $\cf_B$ as closure of a martingale in $(A,\nB((0,1])|_A,\lambda_A)$ along this 
filtration. By the invariance of $B$, the random variables, that form this martingale, are individually constant. 
Therefore we get a sequence of constants that converge to $\cf_B$ in $L_2(A,\lambda_A)$ and $\lambda_A$-a.s. Hence $\cf_B$ 
is a constant almost surely which implies the statement.
\end{proof}
\medskip

\begin{rem}
One can find groups $\G$ such that for example $E=(0,1]$ is locally ergodic but $\G \subsetneq \mpmd$. Take for 
example all permutations that leave the first interval $(0,2^{-N}]$ invariant on each dyadic level $N$. It would 
be of interest to characterize those sub-groups  $\G \subseteq \mpmd$ such 
that a given $E\in \nO((0,1])$ gets locally ergodic.
\end{rem}

Now we let $\G$ be a group like in Section \ref{sec:invariances_levy_processes}.
The main result is the following simplification of the chaos decomposition: 
\medskip

\begin{thm}\label{thm:locally_ergodic}
For pair-wise disjoint $E_1,\ldots,E_L\in \nO((0,1])$, that are locally ergodic with respect to $\GG$, and 
$F\in L_2(\nF^X)$ consider the following conditions:
\begin{enumerate}
\item $F$ is $\GG$-invariant.
\item One has $F=\sum_{n=0}^\infty I_n(f_n)$ with symmetric, $\G[n]$-invariant, and 
      $(\cB((0,1])_E\otimes \mathcal{B}(\R))^{\otimes n}\vee \cN_n$-measurable $f_n$.
\item One has $F=\sum_{n=0}^\infty I_n(f_n)$ with symmetric $(\cB((0,1])_E\otimes \mathcal{B}(\R))^{\otimes n}$-measurable $f_n$.
\item $F$ is invariant with respect to the group $\HH$ generated by $\mpmd_{E_1},\dots,\mpmd_{E_L}$.
\item $F$ is measurable with respect to
       \[ \nA_E:=\sigma \big (I_1(f_1) : f_1\in L_2^1 \mbox{ is }
          \mathcal{B}((0,1])_E \otimes \mathcal{B}(\R)-\mbox{measurable} \big ) \vee \nN.
       \]
\end{enumerate}
Then it holds that (1) $\Longleftrightarrow$ (2) $\Longrightarrow$ (3) $\Longleftrightarrow$ (4) 
$\Longleftrightarrow$ (5).
Moreover, for $\GG=\HH$ all assertions are equivalent.
\end{thm}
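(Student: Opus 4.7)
The plan is to establish the stated chain
\[
(1)\Longleftrightarrow(2)\Longrightarrow(3)\Longleftrightarrow(4)\Longleftrightarrow(5)
\]
by using four main ingredients from earlier in the paper: Theorem \ref{thm:main2} (which characterises $\bH_\GG$ through symmetric $\GG[n]$-invariant chaos kernels), Theorem \ref{thm:product_of_locally_ergodic_sets_new} (which, thanks to local ergodicity of $E_1,\dots,E_L$ with respect to $\GG$, embeds $\nI(\GG[n])$ into $(\cB((0,1])_E\otimes\cB(\R))^{\otimes n}\vee\cN_n$), Lemma \ref{lemma:check_ergodicity} (our prototype of a locally ergodic set) and Lemma \ref{lemma:borelstable} (stability of $\bH_\GG$ under Borel composition). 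For the moreover-part it suffices to notice that when $\GG=\HH$ conditions (1) and (4) coincide verbatim, so the chain then closes into a full set of equivalences.

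For (1)$\Longleftrightarrow$(2), Theorem \ref{thm:main2} gives $F\in\bH_\GG$ iff $F$ admits symmetric $\GG[n]$-invariant chaos kernels. Since $\GG$ is countable, each such kernel agrees off a $\cN_n$-null set with an $\nI(\GG[n])$-measurable function, and Theorem \ref{thm:product_of_locally_ergodic_sets_new} then places the kernel in $(\cB((0,1])_E\otimes\cB(\R))^{\otimes n}\vee\cN_n$, yielding (2); the converse is immediate from the $\GG[n]$-invariance clause of (2) together with Theorem \ref{thm:main2}. For (2)$\Longrightarrow$(3) I would modify each $f_n$ on a $\cN_n$-null set to obtain a strictly $(\cB((0,1])_E\otimes\cB(\R))^{\otimes n}$-measurable version and then re-symmetrise; the product $\sigma$-algebra is stable under coordinate permutations, so symmetrisation does not destroy measurability, and modifying kernels on null sets leaves $I_n(f_n)$ unchanged almost surely.

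For (3)$\Longrightarrow$(4) the decisive observation is that any $\cB((0,1])_E$-measurable function is constant on each $E_l$, since every $B\in\cB((0,1])_E$ either contains $E_l$ completely or is disjoint from $E_l$. Thus for every $g\in\mpmd_{E_l}$ (which fixes $(0,1]\setminus E_l$ pointwise and permutes within $E_l$) we have $\cf_B\circ g=\cf_B$ and hence $f_n\circ g[n]=f_n$ pointwise. The commutative diagram of Theorem \ref{thm:SeqT} then gives $T_g I_n(f_n)=I_n(S_{g^{-1},n}f_n)=I_n(f_n)$; since this holds for all $l$, and $\bH_\HH$ is a closed subspace of $L_2$, summing the chaos yields $F\in\bH_\HH$. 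For the reverse (4)$\Longrightarrow$(3) I would apply the already-proved (1)$\Longleftrightarrow$(2) with $\GG$ replaced by $\HH$: the inclusion $\mpmd_{E_l}\subseteq\HH$ together with Lemma \ref{lemma:check_ergodicity} supplies the subgroups required by Definition \ref{definition:locallyergodic_new}, so each $E_l$ is also locally ergodic with respect to $\HH$, and the null-set cleanup then proceeds exactly as in (2)$\Longrightarrow$(3).

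The equivalence (3)$\Longleftrightarrow$(5) follows by a two-sided approximation argument. To show (3)$\Longrightarrow$(5) I would approximate each symmetric $(\cB((0,1])_E\otimes\cB(\R))^{\otimes n}$-measurable $f_n$ in $L_2^n$ by linear combinations of tensor indicators $\cf_{D_1\times\cdots\times D_n}$ with pairwise disjoint $D_j\in\cB((0,1])_E\otimes\cB(\R)$ of finite $\mm$-measure; then $I_n(\cf_{D_1\times\cdots\times D_n})=I_1(\cf_{D_1})\cdots I_1(\cf_{D_n})$ is a product of generators of $\nA_E$, so $I_n(f_n)$ is $\nA_E$-measurable by $L_2$-closure. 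Conversely, (5)$\Longrightarrow$(4) mirrors the (2)$\Longrightarrow$(1) step of Theorem \ref{thm:main2}: $F$ is the $L_2$-limit of Borel functions of finitely many generators $I_1(f_1)$ of $\nA_E$ via martingale convergence and Doob's factorisation lemma; each such generator is $\HH$-invariant by (3)$\Longrightarrow$(4) applied in degree one, Lemma \ref{lemma:borelstable} applied to $\HH$ transfers $\HH$-invariance to the Borel composition, and closedness of $\bH_\HH$ finally delivers $F\in\bH_\HH$. The main bookkeeping obstacle, in my view, is the combination of null-set handling and re-symmetrisation in (2)$\Longrightarrow$(3) together with the non-circular reuse of (1)$\Longleftrightarrow$(2) for $\HH$ inside step (4)$\Longrightarrow$(3).
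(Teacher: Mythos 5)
Most of your chain coincides with the paper's proof: (1)$\Leftrightarrow$(2) via Theorems \ref{thm:main2} and \ref{thm:product_of_locally_ergodic_sets_new}, the null-set modification plus re-symmetrisation for (2)$\Rightarrow$(3), the observation that $\cB((0,1])_E$-measurable kernels are constant on each atom $E_l$ for (3)$\Rightarrow$(4), the reuse of the already-proved equivalence for the group $\HH$ (after Lemma \ref{lemma:check_ergodicity} supplies local ergodicity of the $E_l$ with respect to $\HH$) for (4)$\Rightarrow$(3), and the Doob-factorisation argument for (5)$\Rightarrow$(4) are all sound and essentially the paper's route, as is the closing remark for $\GG=\HH$.

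The genuine gap is in (3)$\Rightarrow$(5). You claim that a symmetric $(\cB((0,1])_E\otimes\cB(\R))^{\otimes n}$-measurable $f_n$ can be approximated in $L_2^n$ by linear combinations of $\cf_{D_1\times\cdots\times D_n}$ with \emph{pairwise disjoint} $D_j\in\cB((0,1])_E\otimes\cB(\R)$, so that $I_n$ of each term factors as $I_1(\cf_{D_1})\cdots I_1(\cf_{D_n})$. This approximation fails: $\cB((0,1])_E$ is atomic in the time direction (each $E_l$ is an atom), and $\mu$ may be atomic in the state direction (e.g.\ $\mu(\{0\})=\sigma^2>0$ whenever a Brownian component is present), so the ``diagonal'' relative to this coarse $\sigma$-algebra has positive $\mm^{\otimes n}$-measure and cannot be exhausted by off-diagonal rectangles. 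Concretely, take $L=1$, $E_1=(0,1]$, $X=W$ a Brownian motion, $n=2$ and $f_2=\cf_{((0,1]\times\{0\})^2}$: every set in $\cB((0,1])_E\otimes\cB(\R)$ of positive $\mm$-measure contains $(0,1]\times\{0\}$, so no two such sets are disjoint, the span of your off-diagonal tensor indicators vanishes on $((0,1]\times\{0\})^2$, and yet $\|f_2\|_{L_2^2}=\sigma^2>0$ with $I_2(f_2)=\sigma^2(W_1^2-1)\neq 0$. The implication itself is true ($W_1^2-1$ is a Borel function of $I_1(\cf_{(0,1]\times\{0\}})$), but the usual off-diagonal density argument, valid for non-atomic control measures, does not deliver it here. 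This is exactly why the paper instead runs an induction on the chaos order using the Lee--Shih product formula: $I_1(f_0)\,I_m(f_1\hat\otimes\cdots\hat\otimes f_m)$ equals $I_{m+1}(f_0\hat\otimes f_1\hat\otimes\cdots\hat\otimes f_m)$ only up to the contraction corrections $I_m(f_0\hat\otimes_0^1(\cdots))$ and $I_{m-1}(f_0\hat\otimes_1^0(\cdots))$, and Lemma \ref{lemma:lee:shih:invariance} is needed to verify that these correction kernels remain constant on the orbits of $\HH[\cdot]$ so that the descent from order $m+1$ to orders $\le m$ closes, terminating at $\nH_\HH^{(1)}=\nA_E$. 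Your proposal is missing this mechanism entirely.
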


\begin{rem}\label{rem:locally_ergodic_counterexample}
The implication $(4) \Longrightarrow (1)$ of Theorem \ref{thm:locally_ergodic} does not hold in general.
In fact, assume $f:\R^2\to \R\in \nC_b(\R^2)$ such that  
$f(X_\frac{1}{2}-X_0, X_1-X_\frac{1}{2})= f(X_1-X_\frac{1}{2},X_\frac{1}{2}-X_0)$ a.s.\
is not satisfied. Define $F\!:=f(X_\frac{1}{2}-X_0,X_1-X_\frac{1}{2})$, 
 let $\HH\subseteq\mpmd$ be generated by $\mpmd_{(0,\frac{1}{2}]}$ and $\mpmd_{(\frac{1}{2},1]}$,
 and $\GG\subseteq \mpmd$ by $\HH$ and $h\in \mpmd$ exchanging the intervals $(0,\frac12]$ and $(\frac12,1]$. 
 The sets $(0,\frac12]$ and $(\frac12,1]$ are locally ergodic with respect to $\GG$,
 $F$ is invariant with respect to $\HH$, but $F$ is not invariant with respect to $\GG$.
\end{rem}

For the proof of implication $(3)\Rightarrow (5)$ we need product formulas for multiple integrals, 
cf.\ \cite[Theorem 3.5]{lee:shih:04} and \cite[Sections~6.4 and~6.5]{peccati:taqqu:11}. They require 
the definition of a \emph{contraction} of chaos kernels as defined in \eqref{eq:contraction}
according to \cite[Formula (21)]{lee:shih:04} (cf.\ also \cite[Definition~6.2.1]{peccati:taqqu:11}). 
As a preparation, we study the invariance properties of contractions:

\begin{lemma}\label{lemma:lee:shih:invariance}
Let $\HH$ be a group of dyadic permutations of $(0,1]$.
For $n,m\ge 1$, $0\le k \le n\wedge m$, $0\le r \le (n\wedge m)-k$, 
$f\in L_2^n$, and $f'\in L_2^m$ we define 
\[ f\otimes_k^r f' : ((0,1]\times \R)^{n-k-r} \times ((0,1]\times \R)^{m-k-r} \times ((0,1]\times \R)^r
   \to \R
\]
by
\begin{equation}\label{eq:contraction}
\big ( f\otimes_k^r f' \big )(\alpha,\beta,\gamma)
   = \Pi_x(\gamma) 
     \int_{((0,1]\times \R)^k} f(\alpha,\gamma,\rho)
                               f'(\rho,\gamma,\beta) \dd \mm^{\otimes k}(\rho) 
\end{equation}
where $\Pi_x(\gamma)$ is the product of the $x$-coordinates of the vector $\gamma$ and where we assume that
\[ \int_{((0,1]\times \R)^k} |f(\alpha,\gamma,\rho)f'(\rho,\gamma,\beta)| \dd \mm^{\otimes k}(\rho) < \infty \]
for all $(\alpha,\beta,\gamma)\in ((0,1]\times \R)^{n+m-2k-r}$. If $f$ is constant on the orbits of $\HH[n]$ and 
$f'$ is constant on the orbits of $\HH[m]$, then 
$f\otimes_k^r f'$ is constant on the orbits of $\HH[n+m-2k-r]$.
 
\end{lemma}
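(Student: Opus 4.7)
The plan is to verify invariance \emph{pointwise} by a change of variables in the contraction integral. Fix $g \in \HH$ and an arbitrary point $(\alpha,\beta,\gamma)\in ((0,1]\times\R)^{n-k-r}\times((0,1]\times\R)^{m-k-r}\times((0,1]\times\R)^r$. I must show
\[ \big ( f\otimes_k^r f' \big )\big (g[n{-}k{-}r]\alpha,\, g[m{-}k{-}r]\beta,\, g[r]\gamma\big ) = \big ( f\otimes_k^r f' \big )(\alpha,\beta,\gamma). \]
Since $g[r]$ acts only on $t$-coordinates and fixes each $x$-coordinate, $\Pi_x(g[r]\gamma)=\Pi_x(\gamma)$, so the prefactor in \eqref{eq:contraction} is unaffected.

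Next I would transform the integral
\[ \int_{((0,1]\times \R)^k} f\big (g[n{-}k{-}r]\alpha,\, g[r]\gamma,\, \rho\big )\, f'\big (\rho,\, g[r]\gamma,\, g[m{-}k{-}r]\beta\big )\,\dd\mm^{\otimes k}(\rho) \]
by the substitution $\rho = g[k]\sigma$. Since $g$ preserves $\lambda$ and the map $(t,x)\mapsto(g(t),x)$ leaves $\mm=\lambda\otimes\mu$ invariant, $g[k]$ preserves $\mm^{\otimes k}$, so $\dd\mm^{\otimes k}(\rho)=\dd\mm^{\otimes k}(\sigma)$. Because $g[N]$ acts componentwise, the concatenation satisfies
\[ \big (g[n{-}k{-}r]\alpha,\, g[r]\gamma,\, g[k]\sigma\big ) = g[n](\alpha,\gamma,\sigma) \]
and analogously $\big (g[k]\sigma,\, g[r]\gamma,\, g[m{-}k{-}r]\beta\big ) = g[m](\sigma,\gamma,\beta)$. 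The hypothesis that $f$ is constant on the orbits of $\HH[n]$ and $f'$ on those of $\HH[m]$ then gives
\[ f\big (g[n](\alpha,\gamma,\sigma)\big ) = f(\alpha,\gamma,\sigma), \qquad f'\big (g[m](\sigma,\gamma,\beta)\big ) = f'(\sigma,\gamma,\beta), \]
so the transformed integral reproduces the original one and the claim follows after multiplying by $\Pi_x(\gamma)$.

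There is no real obstacle — the argument is a one-line change of variables once one observes that constancy on orbits is a \emph{pointwise} statement, so the equalities above hold at every point rather than merely almost everywhere, which means no null-set bookkeeping is needed. The only minor point worth spelling out is that the integrability side condition is symmetric under the substitution $\rho\mapsto g[k]\sigma$, so it transfers automatically and the integral under consideration is still absolutely convergent.
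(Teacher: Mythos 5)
Your proof is correct and is essentially the paper's own argument: a pointwise substitution $\rho = g[k]\sigma$ in the contraction integral, justified by the fact that $g[k]$ preserves $\mm^{\otimes k}$, followed by the observation that the concatenated arguments are exactly $g[n](\alpha,\gamma,\sigma)$ and $g[m](\sigma,\gamma,\beta)$, so the pointwise orbit-invariance of $f$ and $f'$ finishes the computation. You are in fact marginally more explicit than the paper in noting that $\Pi_x(g[r]\gamma)=\Pi_x(\gamma)$ and that the integrability hypothesis transfers under the substitution.
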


\begin{proof}
For $g\in \HH$ we simply obtain that
\equa 
&   & \big (f\otimes_k^r f'\big )(g[n-k-r]\alpha,g[m-k-r]\beta,g[r]\gamma) \\
& = & \int_{((0,1]\times \R)^k} f(g[n-k-r]\alpha,g[r]\gamma,\rho) f'(\rho,g[r]\gamma,g[m-k-r]\beta) 
      \dd \mm^{\otimes k}(\rho) \\
& = & \int_{((0,1]\times \R)^k} f(g[n-k-r]\alpha,g[r]\gamma,g[k]\rho)f'(g[k]\rho,g[r]\gamma,g[m-k-r]\beta) \dd \mm^{\otimes k}(\rho) \\
& = & \int_{((0,1]\times \R)^k} f(\alpha,\gamma,\rho)f'(\rho,\gamma,\beta)  \dd \mm^{\otimes k}(\rho) \\
& = & \big (f\otimes_k^r f'\big )(\alpha,\beta,\gamma).
\tion
\end{proof}

For the proof of Theorem \ref{thm:locally_ergodic} we denote by 
$\big ( f\hat\otimes_k^r f' \big )$ the symmetrization of 
$\big ( f\otimes_k^r f' \big )$, and by $f_1\hat\otimes \cdots \hat\otimes f_n$ the symmetrization of
$f_1\otimes \cdots \otimes f_n$.

\begin{proof}[Proof of Theorem \ref{thm:locally_ergodic}]
(2) $\Longrightarrow$ (1) follows from Theorem \ref{thm:main2} and
(1) $\Longrightarrow$ (2) from Theorems  \ref{thm:main2} and \ref{thm:product_of_locally_ergodic_sets_new}.

(2) $\Longrightarrow$ (3) We find an $f'_n=f_n$ a.e.\ that is $(\cB((0,1])_E\otimes \mathcal{B}(\R))^{\otimes n}$-measurable. By symmetrizing 
this $f'_n$, we get a symmetric and $(\cB((0,1])_E\otimes \mathcal{B}(\R))^{\otimes n}$-measurable kernel.

(3) $\Longrightarrow$ (4) follows again from Theorem \ref{thm:main2}.

(4) $\Longrightarrow$ (3) By Theorem  \ref{thm:main2}, we get symmetric kernels that are $\HH[n]$-invariant.
On the other side, Lemma  \ref{lemma:check_ergodicity} yields that $E_1,\ldots,E_L$ are locally ergodic with
respect to $\HH$ so that $\nI(\HH[n]) \subseteq (\cB((0,1])_E\otimes \mathcal{B}(\R))^{\otimes n}\vee \cN_n$ by
Theorem \ref{thm:product_of_locally_ergodic_sets_new}. One can finish as in (2) $\Longrightarrow$ (3).

(5) $\Longrightarrow$ (4) From Lemma~\ref{lemma:check_ergodicity} we know that $E_1,\ldots,E_L$ are locally
ergodic with respect to $\HH$. Next we observe that $I_1(f_1)$ is $\HH$-invariant so that (using the arguments from
(1) $\Longleftrightarrow$ (2) and the a.e.\ uniqueness of $f_1$) one can replace the $f_1$ by $f'_1$ that is $\HH[1]$-invariant. Therefore,
$F$ is $\nH_\HH$-measurable.

(3) $\Longrightarrow$ (5) 
Let $m\ge 1$ and $f_0,\ldots,f_m\in L_2^1$ be step-functions 
based on sets of type $A\times J$ 
with $A \in \{ E_1,\dots,E_L \}$ or $A\subseteq (E_1\cup\cdots \cup E_L)^c$ is a Borel set and $J = (a,b]$ 
or  $J = [-b,-a)$ with $0<a<b<\infty$, or $J = \{ 0 \}$. Then the $f_i$ are constant on the orbits of $\HH[1]$ 
and their integrability assures that we can we apply 
\cite[Theorem 3.5]{lee:shih:04} to get that
\begin{multline*}
I_{m+1} (f_0 \hat\otimes f_1\hat \otimes \cdots \hat \otimes f_m) = \\
      I_1(f_0) I_m(f_1\hat\otimes \cdots \hat\otimes f_m) 
 -  m \big [    I_m     (f_0 \hat \otimes_0^1 (f_1\hat \otimes \cdots \hat \otimes f_m)) 
                  +  I_{m-1} (f_0 \hat \otimes_1^0 (f_1\hat \otimes \cdots \hat \otimes f_m)) 
           \big ].
\end{multline*}
Because of Lemma \ref{lemma:lee:shih:invariance}, all integrands occurring on the the right-hand hand side are constant on the orbits 
of $\HH[1]$, $\HH[m]$,  $\HH[m]$, and $\HH[m-1]$, respectively. This implies that
$I_{m+1} (f_0 \hat\otimes f_1\hat \otimes \cdots \hat \otimes f_m)$
is measurable with respect to
\[  \nH_\HH^{(m)} := \sigma \big (I_n(h_n) : h_n \mbox{ is } \HH[n]-\mbox{invariant}, n\in \{1,\ldots,m\} \big ) 
    \vee \nN. \]
Let $h_{m+1}$ be symmetric and $(\cB((0,1])_E\otimes \mathcal{B}(\R))^{\otimes (m+1)}$-measurable.
It is standard that finite linear combinations of tensor products 
$f_0 \hat\otimes f_1\hat \otimes \cdots \hat \otimes f_m$ of the above form can be used to approximate 
$h_{m+1}$ in $L_2^{m+1}$. 
Using that any $L_2$-convergent sequence contains a sequence that converges almost surely, we get that
$I_{m+1}(h_{m+1})$ is already $\nH_\HH^{(m)}$-measurable.
Induction over $m$ and the identity $\nH_\HH^{(1)} = \nA_E$ yield the implication (3) $\Longrightarrow$ (5).

\medskip

Finally, the equivalence of the assertions in case $\HH=\GG$ is obvious  as $E_1,\ldots,E_L$ are 
locally ergodic with respect to $\HH$ as already used above.
\end{proof}

\begin{rem}
\begin{enumerate}
\item If $E_1,\ldots,E_L$ from Theorem~\ref{thm:locally_ergodic} form a partition of $(0,1]$, then the symmetric kernels $f_n$ 
      in Theorem \ref{thm:locally_ergodic}(3) are constant in the time variables on all cuboids  
      $E_{l_1} \times \cdots \times E_{l_n}$ with $l_1,\ldots,l_n \in \{1,\ldots,L \}$.
\item Given a system of $\mathcal{B}((0,1])_E \otimes \mathcal{B}(\R)$-measurable $f_1^1,f_1^2,\ldots$ such that
         \[ \nA_E = \sigma \big (I_1(f_1^l) : l=1,2,\ldots \big ) \vee \nN,
       \]
      Theorem \ref{thm:locally_ergodic}(5) is equivalent to the fact that we find a functional $\Phi:\R^\N\to \R$, 
      measurable 
      with respect to the Borel $\sigma$-algebra on $\R^\N$ generated by the cylinder sets, such that
      \[ F= \Phi(I_1(f_1^1),I_1(f_1^2), \ldots) \mbox{ a.s.} \]
      This follows from a standard factorization due to Doob (see \cite[Lemma~II.11.7]{bauer:01}).
      For example, for $E=(0,1]$, this leads to representations of $F$ in terms of $B_1$ (the normalized Brownian part if present)
      and $N((0,1],(a,b))$ with $ab>0$.
\end{enumerate}
\end{rem}


\section{Examples and applications}
\label{sec:examples}

\subsection{A negative example: Shift operators}
\label{subsec:negative_example}

First we motivate the need of {\em locally ergodic} sets. We do this by considering
the group generated by shifts, which is inspired by the work of It\^o \cite{ito:56}. Assume that 
\[ F= I_2 (f_2) 
   \sptext{1}{where}{1}
   f_2((s,x),(t,y)):= g_2(|s-t|)h_2(x,y)
\]
with a measurable function $g_2:[0,1]\to \R$ such that $g_2(1/2-s)=g_2(1/2+s)$ for
$s\in [0,1/2]$ and a symmetric Borel function $h_2:\R^2\to \R$ such that 
$f_2 \in \tilde L_2^2$.
It is straightforward to check that $F$ is invariant with respect to all shifts
$s_h:(0,1]\to (0,1]$, $0<h<1$, defined by $s_h(t) := t+h$ if $t+h\le 1$ and $s_h(t):=t+h-1$ if $t+h>1$. 
Obviously, the measure $\mu$ and the functions $g_2$ and $h_2$ can be chosen such that there is no symmetric $\tilde f_2((s,x),(t,y))$ 
not depending on $(s,t)$, but with $f_2=\tilde f_2$ a.e.\ (take for example $\mu$ as the Dirac measure in $1$).

\subsection{Positive examples}
\label{subsec:basicexamples}
Our positive examples are based on Proposition \ref{prop:positive_example} below for which we need the notion
of \emph{weak} $\GG$-invariance:

\begin{defin}
Given a subgroup $\GG \subseteq \mpmd$, we say that an $\cF^X$-measurable random variable $Z:\Omega \to \R$ 
is \emph{weakly} $\GG$-invariant provided that $f(Z)$ is $\GG$-invariant for all $f\in \cC_b(\R)$.
\end{defin}
$\GG$-invariance implies  \emph{weak} $\GG$-invariance by Lemma \ref{lemma:composition_and_Tg}, but the converse does not 
need to be true because of a possibly missing integrability.
To consider our examples, let us fix a sequence of time-points
\[ 0\leq s_1<t_1\leq \ldots \leq s_L<t_L\leq 1 \]
together with the corresponding intervals $\widetilde E_l:= (s_l,t_l]$
for the rest of this section.
Similarly as before, we let
\[    
      \cB((0,1])_{\widetilde E} 
   := \cB\left ( (0,1] \setminus \bigcup_{l=1}^L (s_l,t_l] \right )
      \vee \sigma \left ( (s_1,t_1],\dots,(s_L,t_L] \right ). 
\]
\begin{prop}\label{prop:positive_example} 
Assume $\cF^X$-measurable and weakly $\mpmd_{(s_l,t_l]}$-invariant $Z_1,\ldots,Z_N\colon \Omega\to \R$
for $l\in \{1,\ldots,L\}$, and let $f\colon \R^N\to \R$ be a Borel function with 
$F=f(Z_1,\dots,Z_N)\in L_2(\cF^X)$. Then, there are 
$(\cB((0,1])_{\widetilde E}\otimes \mathcal{B}(\R))^{\otimes n}$-measurable and symmetric chaos kernels $\tilde f_n$ for 
$F$. In particular, they are constant on the cuboids
\[ \prod_{j=1}^n (s_{l_j},t_{l_j}]   \sptext{1}{for}{1}  l_1,\ldots,l_n\in \{1,\ldots,L\}. \] 
\end{prop}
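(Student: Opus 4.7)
The plan is to apply Theorem \ref{thm:locally_ergodic} to the subgroup $\HH\subseteq\mpmd$ generated by $\mpmd_{(s_1,t_1]},\dots,\mpmd_{(s_L,t_L]}$: first verify that $F\in\bH_\HH$, then read off the kernel structure from the implication $(1)\Longrightarrow(3)$ of that theorem, using Lemma \ref{lemma:check_ergodicity} to supply the required local ergodicity.

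For $F\in\bH_\HH$, fix $\phi\in\nC_b(\R)$ and $k\in\{1,\dots,N\}$. Weak invariance gives $\phi(Z_k)\in\bH_{\mpmd_{(s_l,t_l]}}$ for every $l$, so by Theorem \ref{thm:main2} each $\phi(Z_k)$ is $\nH_{\mpmd_{(s_l,t_l]}}$-measurable. Because $T_{gg'}=T_gT_{g'}$ and $\HH$ is generated by the subgroups $\mpmd_{(s_l,t_l]}$, an $L_2$-random variable is $\HH$-invariant iff it is $\mpmd_{(s_l,t_l]}$-invariant for every $l$; combining this with Theorem \ref{thm:main2} identifies $\nH_\HH$ with $\bigcap_{l=1}^L\nH_{\mpmd_{(s_l,t_l]}}$ modulo $\nN$, so $\phi(Z_k)$ is $\nH_\HH$-measurable. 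Approximating $\cf_{(-\infty,r]}$ pointwise by continuous truncations $\phi_n\in\nC_b(\R)$ shows $\{Z_k\le r\}\in\nH_\HH$ for every $r\in\R$, hence $Z_k$ itself is $\nH_\HH$-measurable. Consequently $F=f(Z_1,\dots,Z_N)$ is $\nH_\HH$-measurable as a Borel function of $\nH_\HH$-measurable random variables, and since $F\in L_2(\cF^X)$ Theorem \ref{thm:main2} gives $F\in L_2(\Omega,\nH_\HH,\P)=\bH_\HH$.

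To extract the kernel structure, assume first that all $s_l,t_l$ are dyadic so that $\widetilde E_l\in\nO((0,1])$. By Lemma \ref{lemma:check_ergodicity} each $\widetilde E_l$ is locally ergodic with respect to $\mpmd_{\widetilde E_l}\subseteq\HH$, and Theorem \ref{thm:locally_ergodic} $(1)\Longrightarrow(3)$ produces symmetric $(\cB((0,1])_{\widetilde E}\otimes\cB(\R))^{\otimes n}$-measurable chaos kernels for $F$. For non-dyadic $s_l$ or $t_l$, pick dyadic sequences $s_l^k\searrow s_l$ and $t_l^k\nearrow t_l$, set $E_l^k:=(s_l^k,t_l^k]\in\nO((0,1])$, and apply the dyadic case to the $E_l^k$: for every $k$ a version of the $n$-th kernel is $(\cB((0,1])_{E^k}\otimes\cB(\R))^{\otimes n}$-measurable. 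A Fubini argument, using that $\bigcup_k E_l^k$ equals $\widetilde E_l$ up to a $\lambda$-null set, identifies $\bigcap_k(\cB((0,1])_{E^k}\otimes\cB(\R))^{\otimes n}$ with $(\cB((0,1])_{\widetilde E}\otimes\cB(\R))^{\otimes n}$ modulo $(\lambda\otimes\mu)^{\otimes n}$-null sets; the almost-sure uniqueness of the chaos kernels then lets us pick a single version with the claimed measurability. The ``in particular'' clause is automatic, since the atoms of $\cB((0,1])_{\widetilde E}$ contain the $(s_l,t_l]$.

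The main obstacle is bridging from pointwise-in-$\nC_b(\R)$ invariance of the individual $Z_k$'s to $\HH$-invariance of the composite $F$; this hinges on identifying $\nH_\HH$ with $\bigcap_l\nH_{\mpmd_{(s_l,t_l]}}$ and on the monotone-class upgrade from $\nC_b(\R)$-functionals to Borel functionals. The non-dyadic boundary case is otherwise a routine approximation.
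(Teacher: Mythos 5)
Your reduction of the statement to Theorem \ref{thm:locally_ergodic} via Lemma \ref{lemma:check_ergodicity} is exactly the paper's strategy, and your route to $F\in\bH_\HH$ is a valid variant of it: the paper composes $\arctan$ with the $Z_k$ and invokes Lemma \ref{lemma:borelstable}, whereas you upgrade the weak invariance to $\nH_{\HH}$-measurability of the $Z_k$ themselves (approximating $\cf_{(-\infty,r]}$ by $\nC_b$-functions and using completeness of $\nH_\HH$) and then compose with the Borel function $f$. Both work; yours trades the $\arctan/\tan$ change of variables for the (easily checked, but nowhere explicitly stated in the paper) multiplicativity $T_{gg'}=T_gT_{g'}$ needed to pass from invariance under the generators $\mpmd_{(s_l,t_l]}$ to invariance under all of $\HH$.

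The genuine problem is your treatment of non-dyadic endpoints. First, the detour is largely unnecessary: $\nO((0,1])$ contains every open interval, so a non-dyadic \emph{left} endpoint $s_l$ costs nothing, and for a non-dyadic $t_l$ the paper simply replaces $(s_l,t_l]$ by $(s_l,t_l)\in\nO((0,1])$, notes $\mpmd_{(s_l,t_l)}=\mpmd_{(s_l,t_l]}$, applies Theorem \ref{thm:locally_ergodic} once, and modifies the kernels on the $\lambda^{\otimes n}$-null set coming from the points $t_l$. Second, and more seriously, the step you label ``a Fubini argument'' --- identifying $\bigcap_k(\cB((0,1])_{E^k}\otimes\cB(\R))^{\otimes n}$ with $(\cB((0,1])_{\widetilde E}\otimes\cB(\R))^{\otimes n}$ modulo null sets, and then extracting a \emph{single} version of $f_n$ measurable with respect to the intersection --- is not a routine application of Fubini. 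Intersections of decreasing product $\sigma$-algebras are in general strictly larger than the product of the intersections, and equality modulo null sets is precisely the kind of statement the paper has to work for: it is the content of Lemma \ref{lemma:small_diagonal_new} and of step II of the proof of Theorem \ref{thm:product_of_locally_ergodic_sets_new}, where an increasing exhaustion by finite locally ergodic sets is controlled via a reverse-martingale-type argument on sets $D^j$ of vanishing measure. As written, your claim is asserted rather than proved, and filling it in would essentially reproduce that machinery; the clean fix is to drop the approximation scheme and use the open-interval device above.
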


\begin{proof}
The variables $\f(Z_k)$, $k=1,\ldots,N$, are $\mpmd_{(s_l,t_l]}$-invariant, where  $\f(x):=\arctan(x)$. 
Letting  $\psi(y) := \tan(y)$ for $y \in (-\pi/2,\pi/2)$ and $\psi(y) :=0$ otherwise, and using the change of variables
$g(y_1,\ldots,y_N):=f(\psi(y_1),\ldots,\psi(y_N))$,
Lemma~\ref{lemma:borelstable} implies that $F = g(\f(Z_1),\ldots,\f(Z_N))$ is $\mpmd_{(s_l,t_l]}$-invariant. 
The sets $E_l:=(s_l,t_l]$ if $t_l$ is dyadic, and $E_l:=(s_l,t_l)$ otherwise,  belong to $\nO((0,1])$.
According to Lemma~\ref{lemma:check_ergodicity} the set $E_l$ is locally ergodic with respect to $\mpmd_{E_l}$ and therefore 
with respect to the group generated by $\mpmd_{E_1},\ldots,\mpmd_{E_L}$.
Furthermore, observing  that $\mpmd_{E_l} = \mpmd_{(s_l,t_l)} = \mpmd_{(s_l,t_l]}$ if $t_l$ is not dyadic,
Theorem \ref{thm:locally_ergodic} gives the existence of symmetric kernels 
$f_n$ that are $(\cB((0,1])_E\otimes \mathcal{B}(\R))^{\otimes n}$-measurable.
Modifying the kernels on a null set yields the assertion.
\end{proof}

\subsubsection{Dol\'eans-Dade stochastic exponential}
We follow \cite{geiss:geiss:laukkarinen:13} and assume $X$ to be $L_2$-integrable and of mean zero.
For $0\le a \le t \le 1$ we let
\[ S_t^a := 1 + \sum_{n=1}^\infty \frac{I_n(\cf_{(a,t]}^{\otimes n})}{n!}, \]
where we can assume that all paths of $(S_t^a)_{t\in [a,1]}$ are 
c\`adl\`ag for any fixed $a\in [0,1]$. Then we get that
\[ S_t^a = 1 + \int_{(a,t]} S_{u-}^a \dd X_u \mbox{ a.s.}
   \sptext{1}{and}{1}
   S_t = S_t^a S_a \mbox{ a.s.} 
   \sptext{1}{with}{1}
   S_t:= S_t^0.
   \]
Therefore we get from the chaos representation of $S_t^a$:

\begin{lemma}
Each random variable $S_{s_l}^{t_l}$ is $\mpmd_{(s_k,t_k]}$-invariant for  
$k=1,\ldots,L$.
\end{lemma}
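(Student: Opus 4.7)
The plan is to invoke Theorem~\ref{thm:main2}, which tells us that $\GG$-invariance of an $L_2(\cF^X)$-random variable is equivalent to $\GG[n]$-invariance of its (symmetric) chaos kernels. The chaos expansion given for $S_t^a$ already displays symmetric kernels, namely $\frac{1}{n!}\cfa{(s_l,t_l]}{n}$ at level $n$ (with the natural interpretation of the super/sub-script pair), so the task reduces to checking, for each $g \in \mpmd_{(s_k,t_k]}$, that
\[ \cfa{(s_l,t_l]}{n} \circ g[n] = \cfa{(s_l,t_l]}{n} \sptext{1}{a.e.\ on}{1} ((0,1]\times \R)^n. \]
Because the kernel is a tensor product that does not depend on the state variables, this further collapses to verifying $\cf_{(s_l,t_l]} \circ g = \cf_{(s_l,t_l]}$ on $(0,1]$, i.e.\ that $g$ preserves the set $(s_l,t_l]$ (possibly up to a null set).

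This set identity I would split into two cases. If $k=l$, then $g$ is the identity on $(s_l,t_l]^c$, and being a bijection of $(0,1]$ it must therefore permute $(s_l,t_l]$ onto itself, giving the claim directly. If $k\not = l$, the hypothesis $0\le s_1<t_1\le s_2<\ldots \le s_L<t_L\le 1$ guarantees that $(s_k,t_k]$ and $(s_l,t_l]$ are disjoint, so $(s_l,t_l]\subseteq (s_k,t_k]^c$ is fixed pointwise by $g$; conversely, the only movement of $g$ takes place inside $(s_k,t_k]\subseteq (s_l,t_l]^c$, so $(s_l,t_l]^c$ is also preserved. In both cases $g^{-1}((s_l,t_l])=(s_l,t_l]$, so the indicator is invariant, and multiplication across the $n$ coordinates yields the claimed invariance of the kernel.

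Applying the equivalence (1)$\Longleftrightarrow$(3) of Theorem~\ref{thm:main2} with $\GG:=\mpmd_{(s_k,t_k]}$ then delivers the $\mpmd_{(s_k,t_k]}$-invariance of $S_{s_l}^{t_l}$. The argument contains no real obstacle — the only point of care is the bookkeeping of the two cases $k=l$ versus $k\ne l$, both of which are handled by exploiting the fact that $g$ fixes the complement of $(s_k,t_k]$ together with the disjointness provided by the time-ordering.
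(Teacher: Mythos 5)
Your proof is correct and follows the same route the paper intends: the explicit chaos expansion of the Dol\'eans--Dade exponential exhibits the symmetric kernels $\frac{1}{n!}\cfa{(s_l,t_l]}{n}$, each $g\in\mpmd_{(s_k,t_k]}$ preserves the set $(s_l,t_l]$ (exactly, in both the $k=l$ and $k\neq l$ cases), and Theorem~\ref{thm:main2} converts this kernel invariance into invariance of the random variable. The paper states the lemma with no written proof beyond ``from the chaos representation,'' and your case analysis fills in precisely that argument.
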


One could continue the investigation by using more general  Dol\'eans-Dade exponential formulas (see for example 
\cite[Chapter II, Theorem 37]{protter:04}), which is not done here.

\subsubsection{Limit functionals} Behind the next examples there is common idea formulated in

\begin{defin}\label{definition:C(s,t]}
For $0\le s < t \le 1$ a random variable $Z\colon\Omega\to \R$ belongs to the class
$C(s,t]$ provided that there exists a sequence $0\le N_1 < N_2 < \ldots$ of integers and 
Borel functions $\Phi_k \colon \R^{M_k} \to \R$ such that
\[ Z = \lim_{k\to\infty}Z^{k}:= \lim_{k\to\infty} \Phi_k 
       \left (X_{\frac{a_k}{2^{N_k}}}-X_s,X_{\frac{a_k+1}{2^{N_k}}} - X_{\frac{a_k}{2^{N_k}}},\ldots,
              X_{\frac{b_k}{2^{N_k}}} - X_{\frac{b_k-1}{2^{N_k}}},X_t-  X_{\frac{b_k}{2^{N_k}}}\right )
      \quad \mbox{a.s.,} \]
where $\frac{a_k}{2^{N_k}}$ is the smallest grid point greater than or equal to $s$ and
$\frac{b_k}{2^{N_k}}$ is the largest grid point smaller than or equal to $t$, 
$M_k := b_k - a_k+2$, and the function $\Phi_k$ is symmetric in its arguments 
where the first and last coordinate are excluded.\footnote{Here and in the following 
it is implicitly assumed that the partitions are taken always 
in a way that $\frac{a_k}{2^{N_k}}<\frac{b_k}{2^{N_k}}$ by choosing $N_k$ large enough.}
\end{defin}

\begin{prop}\label{prop:sym(s,t)_chaos_kernels}
Let $Z_1,\ldots,Z_L:\Omega\to \R$ be random variables such that $Z_l$ belongs to the 
class $C(s_l,t_l]$ for $l=1,\ldots,L$, and let $f\colon \R^L\to \R$ be a Borel function with $F:= f(Z_1,\ldots,Z_L) \in L_2(\cF^X)$. 
Then, there are $(\cB((0,1])_{\widetilde E} \otimes \mathcal{B}(\R))^{\otimes n}$-measurable 
and symmetric chaos kernels $\tilde f_n$ for $F$. In particular, they are constant on the cuboids
\[ \prod_{j=1}^n (s_{l_j},t_{l_j}]   \sptext{1}{for}{1}  l_1,\ldots,l_n\in \{1,\ldots,L\}. \]
\end{prop}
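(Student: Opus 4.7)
The plan is to deduce this from Proposition \ref{prop:positive_example} by showing that each $Z_l$ is weakly $\mpmd_{(s_k,t_k]}$-invariant for every $k\in\{1,\dots,L\}$. Fix $l,k\in\{1,\dots,L\}$, $\varphi\in\nC_b(\R)$, and $g\in\mpmd_{(s_k,t_k]}$; we must show $T_g\varphi(Z_l)=\varphi(Z_l)$ a.s. Since $\varphi$ is bounded and continuous, $Z_l^j\to Z_l$ a.s.\ implies $\varphi(Z_l^j)\to\varphi(Z_l)$ in $L_2$ by dominated convergence; the $L_2$-isometry of $T_g$ and Lemma~\ref{lemma:composition_and_Tg} then reduce the task to proving $T_g Z_l^j = Z_l^j$ a.s.\ whenever $N_j\geq\deg(g)$.

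For such $j$, represent $g=g_\pi$ with $\pi\in\nS_{2^{N_j}}$. The condition $g|_{(s_k,t_k]^c}=\id$ forces $\pi(i)=i$ for every index $i$ with $I_i^{N_j}\not\subseteq(s_k,t_k]$, and by the formula for $g_\pi$ this means $g$ is \emph{pointwise} the identity on each such interval $I_i^{N_j}$. If $k\neq l$, then $(s_l,t_l]$ and $(s_k,t_k]$ are disjoint, so every dyadic level-$N_j$ interval intersecting $(s_l,t_l]$ together with the partial boundary intervals $(s_l,a_j/2^{N_j}]$ and $(b_j/2^{N_j},t_l]$ is fixed pointwise; the commutative diagram of Theorem \ref{thm:SeqT} applied to the first-chaos representation $X_b-X_a=I_1(\cf_{(a,b]\times\R})$ then yields $T_g Y = Y$ for every increment $Y$ used as an argument of $\Phi_j$, so $T_g Z_l^j = Z_l^j$. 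If $k=l$, the same argument shows that $T_g$ fixes the boundary partial increments $X_{a_j/2^{N_j}}-X_{s_l}$ and $X_{t_l}-X_{b_j/2^{N_j}}$, because $\pi$ must fix $a_j$ and $b_j+1$ (so $g$ is pointwise the identity on $I_{a_j}^{N_j}$ and $I_{b_j+1}^{N_j}$, which contain the partials); the interior increments $X_{I_i^{N_j}}$ with $i\in\{a_j+1,\dots,b_j\}$ are permuted by $T_g$ according to $\pi$. Since $\Phi_j$ is symmetric in its middle arguments by the definition of the class $C(s_l,t_l]$, this permutation leaves the value unchanged and again $T_g Z_l^j = Z_l^j$.

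Having established weak $\mpmd_{(s_k,t_k]}$-invariance of each $Z_l$ for every $k$, Proposition \ref{prop:positive_example} applied to $Z_1,\dots,Z_L$ and $f$ yields symmetric chaos kernels $\tilde f_n$ that are $(\cB((0,1])_{\widetilde E}\otimes\cB(\R))^{\otimes n}$-measurable, and in particular constant in the time variables on the cuboids $\prod_{j=1}^n(s_{l_j},t_{l_j}]$. The main technical point is the treatment of the non-dyadic boundary partial increments: this is resolved by the observation that any $g_\pi$ is pointwise the identity on every dyadic level-$N_j$ interval fixed by $\pi$, so that the first-chaos kernels of these partial increments are preserved by $T_g$ even when $s_l$ and $t_l$ are non-dyadic.
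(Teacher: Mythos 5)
Your strategy is the paper's: reduce via Proposition \ref{prop:positive_example} to weak $\mpmd_{(s_k,t_k]}$-invariance of each $Z_l$, approximate by the $Z_l^j$ from Definition \ref{definition:C(s,t]}, and check invariance of the approximants for $N_j\ge\deg(g)$ by separating the cases $k\ne l$ (all relevant intervals fixed pointwise) and $k=l$ (boundary partials fixed, interior dyadic increments permuted within the index range $\{a_j+1,\dots,b_j\}$, which the symmetry of $\Phi_j$ in its middle arguments absorbs). This is precisely the content of the paper's parenthetical ``one has to distinguish between the cases $m=l$ and $m\neq l$'', and your combinatorial analysis of which indices $\pi$ must fix is correct and more explicit than the paper's.

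Two steps are not legitimate as written, because no moment assumption on $X$ is made. (i) You reduce to ``$T_gZ_l^j=Z_l^j$'' and to ``$T_gY=Y$ for each increment $Y$''; but $T_g$ is an operator on $L_2(\cF^X)$, and neither $Z_l^j=\Phi_j(\dots)$ (with $\Phi_j$ merely Borel) nor a raw increment $X_b-X_a$ need be square-integrable, so these statements are not even well posed, and Lemma \ref{lemma:composition_and_Tg} cannot be applied with inner variable $Z_l^j$. One must instead work throughout with $\varphi(Z_l^j)=(\varphi\circ\Phi_j)(\mbox{increments})$, a bounded Borel function of the increments, and argue on $\nH^{X,d}$ (bounded continuous outer functions first, then bounded Borel ones by the approximation scheme of Lemma \ref{lemma:borelstable}). (ii) The identity $X_b-X_a=I_1(\cf_{(a,b]\times\R})$ is false in this generality: $\cf_{(a,b]\times\R}\in L_2^1$ forces $\mu(\R)<\infty$, i.e.\ square-integrability of $X$, and even then $I_1(\cf_{(a,b]\times\R})=M((a,b]\times\R)$ differs from the increment by a drift term. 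The fact you actually need --- that bounded functions of increments over intervals on which $g$ is pointwise the identity are $T_g$-invariant, including the non-dyadic boundary partials --- follows directly from the definition of $T_g$ on $\nH^{X,d}$ together with the right-continuity of the paths (the approximation of Lemma \ref{lemma:HX0dense}, or the truncation device of Lemma \ref{lemma:permpoisson}), with no appeal to the chaos expansion. With these repairs your argument coincides with the paper's proof.
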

\begin{proof}
By Proposition~\ref{prop:positive_example}, it is sufficient to show that $Z_1,\ldots,Z_L$ are weakly 
$\mpmd_{(s_l,t_l]}$-invariant for $l\in \{1,\ldots,L\}$, i.e.\ that $\varphi(Z_m)$ is 
$\mpmd_{(s_l,t_l]}$-invariant for $\varphi\in \nC_b(\R)$ and $m,l\in \{1,\ldots,L\}$.
Let $g\in \mpmd_{(s_l,t_l]}$ (be not the identity).
Then there exists an integer $M\ge 0$ such that 
$g$ acts as a 
permutation of the dyadic intervals of length $2^{-M}$ and as an identity on 
$(s_l,t_l]^c$.
Therefore, there exist integers $0\le a < b \le 2^M$ such that
\[ (s,t]:=\left (\frac{a}{2^{M}},\frac{b}{2^{M}}\right ]\subseteq (s_l,t_l] \]
and $g$ can be described by permuting dyadic intervals on $(s,t]$ of length $2^{-M}$.
By Definition \ref{definition:C(s,t]}, there is an approximation $Z_m = \lim_{k\to\infty} Z_m^k$ a.s. 
By construction, there is a $k_0\ge 1$ such that for all $k\ge k_0$ one has that 
$\varphi(Z_m^k)$ is $T_g$-invariant (here one has to distinguish between the cases $m=l$ and $m\not = l$). By dominated convergence, $\lim_{k\to\infty} \varphi(Z_m^k) = \varphi(Z_m)$ in $L_2(\cF^X)$ so that $\varphi(Z_m)$ is invariant with respect to $T_g$ as well and the proof is complete.
\end{proof}

\begin{ex}\label{example:sym(s,t)}
For $0\le s < t \le 1$ the following random variables belong to the class $C(s,t]$:
\begin{enumerate}
\item $X_t-X_s$.
\item $[X,X]_t-[X,X]_s$, where $[X,X]$ is the quadratic variation process of $X$.
\item $\sup_{r\in (s,t]} |X_r-X_{r-}|$.
\end{enumerate}
\end{ex}

\begin{proof}
(1) is obvious.
(2): Here we first take $\Phi_k (x_1,\ldots,x_{M_k}) := |x_1|^2 + \cdots + |x_{M_k}|^2$ with $N_k=k\geq k_0$, use 
\cite[Chapter II, Theorem 22]{protter:04} to get a sequence that converges in probability, and  
extract a sub-sequence that converges almost surely.\sloppypar
(3) Taking  $\Phi_k (x_1,\ldots,x_{M_k}) := \max \{ |x_1|,\ldots,|x_{M_k}|\}$ and $N_k:=k$ with $k\geq k_0$ and the uniformity result for c\`adl\`ag paths \cite[Chapter~3, Lemma~1]{billingsley:99} yields the assertion.
\end{proof}

\begin{rem}\label{remark:bsde}
Combining  Proposition \ref{prop:sym(s,t)_chaos_kernels} with Example \ref{example:sym(s,t)}(1) yields that the
symmetric chaos kernels $f_n$ of $F=f(X_{t_1}-X_{s_1},\ldots,X_{t_L}-X_{s_L})$ can be chosen to be constant on the cuboids
\[ \prod_{j=1}^n (s_{l_j},t_{l_j}]   \sptext{1}{for}{1}  l_1,\ldots,l_n\in \{1,\ldots,L\}. \]
This was used in \cite{geiss:steinicke:13} in the investigation of variational properties 
of backward stochastic differential equations driven by L\'evy processes.
\end{rem}

\subsection{An application to the chaotic representation property (CRP)}

In this section we show how our results relate to the chaotic representation property
(see \cite{nualart:schoutens:00,sole:utzet:vives:07,peccati:taqqu:11} and for recent results
\cite{ditella:engelbert:16}). Exemplary we consider the chaos expansion due to Nualart and Schoutens, the 
investigation to what extend more general expansions, for example from \cite{ditella:engelbert:16} 
and particularly from their Section 6, 
can be considered is left to future research. For this subsection
we assume that the L\'evy measure satisfies 
$\int_{(-\varepsilon,\varepsilon)^c} \exp(\lambda |x|) \dd\nu(x) < \infty$ for some $\lambda,\varepsilon>0$. 
Then \cite[Theorem 3]{nualart:schoutens:00} (see also \cite[Section 2.2]{sole:utzet:vives:07}) gives an 
orthogonal decomposition 
\[ L_2(\cF^X) = \R\oplus \left ( \bigoplus_{n=1}^\infty \bigoplus_{i_1,\ldots,i_n \ge 1} \mathcal{H}^{(i_1,\ldots,i_n)}\right ), \]
where the spaces $\mathcal{H}^{(i_1,\ldots,i_n)}$ are 
the range of the $n$-fold iterated integrals 
$J^{i_1,\ldots,i_n}_n:L_2 (\Delta_n,\lambda_n)\to \mathcal{H}^{(i_1,\ldots,i_n)}$ 
with respect to the martingales $H^{(i_1)},\ldots,H^{(i_n)}$, obtained by an orthogonalization of the Teugels
martingales, with 
$\Delta_n := \{ 0 < t_1 < \cdots < t_n \le 1 \}$ and $\lambda_n$ being the
Lebesgue measure on $\Delta_n$. That means we have an expansion
\begin{equation}\label{eqn:NS-expansion}
F = \E F +  \sum_{n=1}^\infty \sum_{i_1,\ldots,i_n \ge 1} 
                                      J^{(i_1,\ldots,i_n)}( g_{i_1,\ldots,i_n})
 \sptext{1}{with}{1}
  g_{i_1,\ldots,i_n} \in L_2 (\Delta_n,\lambda_n).
\end{equation}
In the following we explain that, although the spaces $\mathcal{H}^{(i_1,\ldots,i_n)}$ are not invariant with respect 
to the operators $T_g$, the invariance properties still transfer.
\medskip

(a) The spaces  $\mathcal{H}^{(i_1,\ldots,i_n)}$ are not invariant with respect to the operators 
    $T_g$ in general. To see this, let 
    \[ F:=J^{i_1,\ldots,i_n}_n(\cf_{I_1\times \cdots \times I_n}) \]
    with pairwise disjoint dyadic intervals $I_j := ((k_j-1)/2^d,k_j/2^d]\subseteq (0,1]$ with $1\le k_1 < \cdots< k_n \le 2^d$.
    Let  $\pi \in \nS_{2^{d}}$ and $g:=g_{\pi}\in \mpmd$ the corresponding measure preserving map.
    By \cite[Proposition 7]{sole:utzet:vives:07} 
    (the statement is given without proof, a proof can be found 
     in \cite[Proposition 6.9]{ditella:engelbert:16}) and Theorem \ref{thm:SeqT} of this article
    we derive that 
      \[    T_g F
          = J_n^{i_{\sigma(1)},\ldots,i_{\sigma(n)}}
            \big( \cf_{g(I_{\sigma(1)})\times \cdots \times g(I_{\sigma(n)})}
            \big) \]
      where $\sigma \colon \{ 1,\ldots,n\} \to \{ 1,\ldots, n \}$ is the permutation such that 
      the family of intervals $(g(I_{\sigma(j)}))_{j=1}^n$ is ascending.
      Therefore, the chaos $\nH^{(i_1,\ldots,i_n)}$ is not stable with respect to $T_g$ in general.
\medskip
      
(b) The symmetries from the kernels we consider in Theorem  \ref{thm:main2} transfer to the kernels in
      \eqref{eqn:NS-expansion} in the following sense:
      Assume a $\GG$-invariant $F\in L_2(\nF^X)$ 
      and the corresponding symmetric and $\GG$-invariant kernels 
      $(f_n)_{n\in\NN}$ as in Theorem \ref{thm:main2}(4).
      Let $p_1,p_2,\ldots\in L_2(\R,\mu)$ be the orthogonal polynomials  from 
      \cite[Section  2.1]{sole:utzet:vives:07} and $q_n := \| p_n \|_{L_2(\R,\mu)}^2$. 
      For integers $i_1,\ldots,i_n\ge 1$ with $q_{i_1}\cdots q_{i_n}>0$ we define
      $g_{i_1,\ldots,i_n} \in L_2 (\Delta_n,\lambda_n)$ to be
      \begin{equation}\label{eqn:definition_kernels_nualart_schoutens} 
            g_{i_1,\ldots,i_n}(t_1,\ldots,t_n) 
         := n! \int_{\R^n} f_n ((t_1,x_1),\ldots,(t_n,x_n)) 
               \frac{p_{i_1}(x_1)\cdots  p_{i_n}(x_n)}{q_{i_1}\cdots q_{i_n}} \dd\mu(x_1)\cdots \dd\mu(x_n),
      \end{equation}
      where we can assume that the above integral exists point-wise \emph{for all} 
      $(t_1,\ldots,t_n)\in \Delta_n$ (consider
      $A_n := \{ (t_1,\ldots,t_n) \in (0,1]^n : 
                 \int_{\R^n} |f_n ((t_1,x_1),\ldots,(t_n,x_n))|^2 
                              \dd\mu(x_1)\cdots \dd\mu(x_n) < \infty\}$, 
      which is symmetric with $\lambda^{\otimes n}(A_n)=1$, and so that $A_n\times \R^n$ is $\GG[n]$-invariant with a slight abuse of notation concerning the order of components; finally replace
      $f_n$ by $f_n \cf_{A_n\times \R^n}$).
      By \cite[Proposition 7]{sole:utzet:vives:07} 
      we derive that
      \[ \sum_{n=1}^\infty I_n(f_n) = \sum_{n=1}^\infty \sum_{i_1,\ldots,i_n \ge 1\atop q_{i_1}\cdots q_{i_n}>0} 
                                      J^{(i_1,\ldots,i_n)}( g_{i_1,\ldots,i_n}) 
          \mbox{ a.s.} \]
      and the properties 
      of $f_n$ in $(t_1,\ldots,t_n)$ {\em on} $\Delta_n$ directly transfer to the kernels 
      $g_{i_1,\ldots,i_n}(t_1,\ldots,t_n)$ from \eqref{eqn:NS-expansion}. Moreover,
      \eqref{eqn:definition_kernels_nualart_schoutens} and the symmetry of $f_n$ imply that
      \[    g_{i_1,\ldots,i_n}(g(t_{\sigma(1)}),\ldots,g(t_{\sigma(n)})) 
          = g_{i_{\sigma^{-1}(1)},\ldots,i_{\sigma^{-1}(n)}}(t_1,\ldots,t_n) \]
      for $g\in \mpmd$ and the permutation $\sigma\colon \{1,\ldots,n\} \to  \{1,\ldots,n\}$ that guarantees
      $g(t_{\sigma(1)})<\cdots < g(t_{\sigma(n)})$.

\subsection{An application to Backward Stochastic Differential Equations (BSDEs)}
\label{sub:sec:BSDE}
An example of a BSDE driven by a L\'evy process is a formal equation of the form
\begin{equation}\label{eqn:BSDE:Levy}
 Y_t = F + \int_{(t,1]} f\left (s,Y_s,\left (\int_\R Z_{s,x}h_k(x) \dd\mu(x)\right )_{k=1}^N\right ) \dd s 
             - \int_{(t,1]\times \R} Z_{s,x} \dd M(s,x) \mbox{ a.s.,}
       \hspace{1em} t\in [0,1],
\end{equation}
where $h_1,\ldots,h_N \in L_2(\R,\mu)$ for some $N\in \NN$, and further typical assumptions are 
$F \in L_2(\nF^X)$ for the \emph{terminal condition}, and for the 
\emph{generator} $f\colon [0,1]\times \Omega\times \R \times \R^N\to \R$ certain assumptions 
regarding adaptedness, that $\int_{(0,1]} | f(s,0,0) | \dd s \in L_2(\nF^X)$, and that 
$f$ is Lipschitz in $(y,z)$, uniformly in $(s,\omega)$.
The initial data of the BSDE \eqref{eqn:BSDE:Levy} 
are $(F,f)$ and one seeks for the solution processes 
$(Y,Z)$ that consist of an adapted c\`adl\`ag process $Y=(Y_t)_{t\in [0,1]}$ and
a predictable $Z=(Z_{s,x})_{(s,x)\in [0,1] \times \R}$, both satisfying certain
integrability conditions. In order to solve BSDE \eqref{eqn:BSDE:Levy} one might use Picard iterations. The aim 
of this section is to consider this Picard iteration separately and to demonstrate with this how the results and 
concepts of this paper contribute to the BSDE theory.
In particular, we wish to emphasize that we express the properties, we are interested in, in terms of measurability
in parts (2) and (3) of Definition \ref{definition:invariance_BSDE} below which allows us to compose random
objects with these properties and the resulting objects automatically share  the same property (see the proof 
of Theorem \ref{thm:GG-invariant-BSDE} below).
As we detach  the Picard iteration from the remaining BSDE theory we can keep  our assumptions on the 
generator $f$ below minimal so that our results might be applied for different types of BSDEs.
\medskip

{\sc Setting.}
In the following we let $\nF_t^X:= \sigma (X_s : s\in [0,t]) \vee \{ A \in \nF^X:\P(A)=0 \}$ and
obtain a right-continuous filtration $(\nF_t^X)_{t\in [0,1]}$ with $\nF^X=\nF_1^X$. The symbol
$\nP$ denotes the $\sigma$-algebra of predictable events on 
$[0,1]\times \Omega$, i.e., $\nP$ is generated by all adapted path-wise continuous processes. 
A process 
$Z=(Z_{s,x})_{(s,x)\in [0,1]\times \R}$ is called predictable if 
$Z\colon [0,1]\times \Omega \times \R\to \R$ is $\nP\otimes \nB(\R)$-measurable.
Our notion of invariance adapted to BSDEs reads as follows: 

\begin{defin}\label{definition:invariance_BSDE}
Let 
$Y\colon[0,1]\times \Omega\to \R$ be $\nB([0,1])\otimes \nF^X$-measurable,
$Z\colon[0,1]\times \Omega \times \R \to \R$ be $\nB([0,1])\otimes  \nF^X \otimes \nB(\R) $-measurable,
and $\GG$ be a group of dyadic measure preserving maps $\G\subseteq \mpmd$. 
\begin{enumerate}
\item For $t\in [0,1)$ we let 
      $\GG_t := \{ g \in \GG : g(s)=s \text{ for all } s\in(t,1]\}$.
\item We say that $Y$ is $(\GG_t)_{t\in (0,1)}$-invariant provided that 
      for all $t\in (0,1)$ there is a 
      $\mathcal{B}((t,1])\otimes  \nH_{\GG_t}$-measurable
      $Y^{\GG_t}\colon (t,1]\times \Omega \to \R$ with
            $\E \int_{(t,1]} |Y_s- Y_s^{\GG_t}| \dd s=0$.
\item We say that $Z$ is $(\GG_t)_{t\in (0,1)}$-invariant provided that 
      for all $t\in (0,1)$ there is a 
            $\mathcal{B}((t,1])\otimes  \nH_{\GG_t} \otimes \mathcal{B}(\R)$-measurable
            $Z^{\GG_t}\colon (t,1]\times \Omega\times \R \to \R$ with
            $\E \int_{(t,1]\times \R} |Z_{s,x}- Z_{s,x}^{\GG_t}| \dd\mm(s,x)=0$. 
\end{enumerate}
\end{defin}
Of course, part (2) of the definition above is (in a sense) a special case of part (3).
We obtain a family of subgroups $(\GG_t)_{t\in (0,1)}$ of $\GG$ with 
$\GG_s\subseteq \GG_t$ for $0 < s < t < 1$. Moreover, if
$G\in L_2(\nF^X)$ is $\GG_t$-invariant, then $\E(G|\nF_t^X)$ is $\GG_t$-invariant as well since
the conditional expectation corresponds to a restriction of the invariant kernel functions
$f_n$ to $f_n \cf_{(0,t]^n}$ for $n\in \NN$.
\medskip

{\sc Picard scheme.}
We introduce operators $A_{F,f}$ and $B_{F,f}$ in  \eqref{eqn:picard_operators} below
that are used to solve BSDEs by Picard type
iterations. For this purpose we fix the initial data of our BSDE and let the generator 
$f\colon[0,1]\times \Omega\times \R \times \R^N\to \R$ be such that
\begin{enumerate}
\item $f(t,\omega,\cdot,\cdot)\colon \R^{1+N} \to \R$ is continuous for all $(t,\omega)\in [0,1]\times \Omega$,
\item $f\colon[0,t]\times \Omega \times \R \times \R^N\to \R$ is 
      $\nB([0,t]) \otimes \nF_t^X \otimes \nB(\R) \otimes \nB(\R^N)$-measurable for $t\in [0,1]$, and
\item $F \in L_2(\nF^X)$.
\end{enumerate}
One could also investigate the case that $f$ depends in its last coordinate on 
a sequence $(z_k)_{k=1}^\infty$ (for example Fourier coefficients with respect to an orthonormal 
basis $(h_k)_{k=1}^\infty$ in $L_2(\R,\mu)$), but for simplicity we restricted ourselves to finite sequences.

\begin{defin}\label{definition:D_f^h}
Let $Y\colon[0,1]\times\Omega \to \R$ and $Z\colon[0,1]\times \Omega \times \R \to \R$. 
For $N\in \NN$ and $h_1,\ldots,h_N \in L_2(\R,\mu)$ we say $(Y,Z) \in \nD_f^{h_1,\ldots,h_N}$ if
\begin{enumerate}
\item the restriction of $Y$ to $[0,t]\times\Omega \to \R$ is 
      $\nB([0,t])\otimes \nF_t^X$-measurable 
      for $t\in [0,1]$,
\item the restriction of $Z$ to $[0,t]\times \Omega \times \R \to \R$ is $\nB([0,t])\otimes \nF_t^X \otimes \nB(\R)$-measurable 
      for $t\in [0,1]$ and
      $Z_{s,\cdot}(\omega) \in L_2(\R,\mu)$ for all $(s,\omega)\in [0,1]\times \Omega$,
\item $\int_{(0,1]} |f\big(s,Y_s,(\int_\R Z_{s,x}h_k(x) \dd\mu(x))_{k=1}^N\big)| \dd s \in L_2(\nF^X)$.  
\end{enumerate}
\end{defin}

Let $\nS_2$ be the space of adapted c\`adl\`ag processes $Y=(Y_t)_{t\in [0,1]}$ with
$\| Y \|_{\nS_2} := \| \sup_{t\in [0,1]} |Y_t|\|_2 < \infty$ 
and 
\begin{multline*}
   \nP_2:= \Big \{ Z\colon[0,1]\times\Omega\times\R\to \R \mbox{ predictable with }
   \| Z \|_{\nP_2}^2 := \E \int_{(0,1]\times \R} |Z_{s,x}|^2 \dd\mm(s,x) < \infty \\
   \mbox{ and }
   Z_{s,\cdot}(\omega) \in L_2(\R,\mu) \mbox{ for all } (s,\omega)\in [0,1]\times \Omega \Big \}.
\end{multline*}
For any predictable $Z\colon [0,1]\times\Omega\times\R\to \R$ with 
$\E \int_{(0,1]\times \R} |Z_{s,x}|^2 \dd\mm(s,x) < \infty$ one can find a
$Z'\in \nP_2$ with $\E\int_{(0,1]\times \R} |Z_{s,x} - Z'_{s,x}| \dd \mm (s,x) = 0$, so that the last part 
of the definition of $\nP_2$ is not a restriction for us.

\begin{defin}\label{definition:Picard_operators}
We let $A_{F,f}\colon \nD_f^{h_1,\ldots,h_N} \to \nS_2$ and 
$B_{F,f}\colon \nD_f^{h_1,\ldots,h_N} \to \nP_2$ be given by
\begin{equation}\label{eqn:picard_operators}
   A_{F,f} (Y,Z) := (\overline{Y}_t)_{t\in [0,1]} 
   \sptext{1}{and}{1}
   B_{F,f} (Y,Z) :=  (\overline{Z}_{t,x})_{(t,x)\in [0,1]\times \R},
\end{equation}
where  
\[   \overline Y_t
  := \E \left (\left. F + \int_{(t,1]} 
                f\left (s,Y_s,\left (\int_\R Z_{s,x}h_k(x) \dd\mu(x)\right )_{k=1}^N\right ) \dd s \,\right|\, \cF_t^X
      \right ), \]
and the process $\overline Z$ is determined by 
\[       
      \eta
  =   \E\eta  
    + \int_{(0,1]\times \R} \overline Z_{s,x} \dd M(s,x)
    \sptext{1.5}{with}{.75}
    \eta :=  F + \int_{(0,1]} f\left (s,Y_s,\left (\int_\R Z_{s,x}h_k(x) \dd\mu(x)\right )_{k=1}^N\right ) \dd s.
\]
\end{defin}

With Definition \ref{definition:Picard_operators} is meant that $A_{F,f}$ and $B_{F,f}$ map to the corresponding 
equivalence classes (in $\nS_2$ elements of one class are indistinguishable, in $\nP_2$ they
coincide a.e.\ with respect to $\lambda\otimes \P\otimes \mu$) and  we choose one element from each 
equivalence class in applications.
In the BSDE-context iteratives of the operator $A_{F,f}$ usually converge to a 
\emph{generalized non-linear conditional expectation} of the terminal condition $F$ along the generator $f$,
and iteratives of $B_{F,f}$ to a generalized \emph{non-linear gradient} of $F$ along  the generator $f$.

\begin{rem}
\begin{enumerate}
\item  The c\`adl\`ag modification of $\overline Y$ can be obtained by observing
\begin{multline*}
\E \left (\left. F + \int_{(t,1]} 
                f\left (s,Y_s,\left (\int_\R Z_{s,x}h_k(x)\dd \mu(x) \right )_{k=1}^N\right) \dd s \,\right|\, \cF_t^X
      \right ) \\
= \E \left (\left. F + \int_{(0,1]} 
                f\left(s,Y_s,\left(\int_\R Z_{s,x}h_k(x) \dd\mu(x) \right )_{k=1}^N\right ) \dd s \,\right|\, \cF_t^X
      \right ) \\ -
      \int_{(0,t]} 
                f\left (s,Y_s,\left (\int_\R Z_{s,x}h_k(x) \dd \mu(x)\right )_{k=1}^N\right ) \dd s
      \mbox{ a.s.}
\end{multline*}
\item To obtain $\overline{Z}$ (which is $\lambda\otimes\P\otimes\mu$-a.e.\ unique because of It\^o's isometry)
we use the representation property of the random measure $M$, cf. \cite[Chapter 4]{applebaum:09}.
\end{enumerate}
\end{rem}

{\sc Result.} Our contribution is to show that the abstract Picard scheme is invariant with respect 
to $(\GG_t)_{t\in (0,1)}$ provided that the terminal condition $F$ is $\GG$-invariant and the generator 
$f$ is $(\GG_t)_{t\in (0,1)}$-invariant:

\begin{thm}
\label{thm:GG-invariant-BSDE}
Assume $(Y,Z)\in \nD_f^{h_1,\ldots,h_N}$ and $F\in L_2(\nF^X)$ such that
\begin{enumerate}[(i)]
\item $F$ is $\GG$-invariant,
\item for all $t\in (0,1)$ and $(y,(z^k)_{k=1}^N)\in \R^{1+N}$ the restricted generator 
      $f(\cdot,\cdot,y,(z^k)_{k=1}^N):(t,1]\times \Omega\to \R$ is $\nB((t,1])\otimes \nH_{\GG_t}$-measurable,
\item $Y$ and $Z$ are $(\GG_t)_{t\in (0,1)}$-invariant.
\end{enumerate}
Then the following holds:
\begin{enumerate}
\item $A_{F,f}(Y,Z)_t$ is $\GG_t$-invariant for all $t\in (0,1)$.
\item $B_{F,f}(Y,Z)$ is $(\GG_t)_{t\in (0,1)}$-invariant. 
\end{enumerate}
\end{thm}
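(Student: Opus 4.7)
My approach is to leverage Theorem \ref{thm:main2} together with the chaos-expansion representation of the martingale integrand, plus a closed-subspace argument to handle joint measurability in part (2).

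\textbf{Step 1 (preparation).} Write $\eta := F + \int_{(0,1]} f(s, Y_s, (\int_\R Z_{s,x}h_k\,d\mu)_k)\,ds$ and split $\eta = \eta_{\le t} + \eta_{>t}$, where $\eta_{\le t}$ collects the time integral over $(0,t]$ and $\eta_{>t} := F + \int_{(t,1]} f(\cdots)\,ds$. Since $f$ restricted to $[0,t]\times\Omega$ is $\nB([0,t])\otimes\cF_t^X\otimes\nB(\R)\otimes\nB(\R^N)$-measurable and $(Y,Z)$ is predictable, $\eta_{\le t}$ is $\cF_t^X$-measurable. I will then show that $\eta_{>t}$ is $\GG_t$-invariant. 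Using (iii), pick $\mathcal{B}((t,1])\otimes \nH_{\GG_t}$-measurable $Y^{\GG_t}$ and $\mathcal{B}((t,1])\otimes\nH_{\GG_t}\otimes\nB(\R)$-measurable $Z^{\GG_t}$ agreeing with $Y,Z$ a.e.\ on $(t,1]$. Fubini gives that $(s,\omega)\mapsto \int_\R Z^{\GG_t}_{s,x}(\omega)h_k(x)\,d\mu(x)$ is $\mathcal{B}((t,1])\otimes\nH_{\GG_t}$-measurable. Combining this with (ii) and the continuity of $f$ in $(y,z)$, a standard Carath\'eodory-substitution argument makes the resulting integrand $\mathcal{B}((t,1])\otimes\nH_{\GG_t}$-measurable. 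Integrating in $s$ preserves $\nH_{\GG_t}$-measurability; adding $F$ (which is $\GG$-invariant, hence $\GG_t$-invariant since $\GG_t\subseteq\GG$) puts $\eta_{>t}\in L_2(\Omega,\nH_{\GG_t},\P)$.

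\textbf{Step 2 (part (1)).} Since $\eta_{\le t}$ is $\cF_t^X$-measurable, $\overline Y_t=\E(\eta_{\le t}\mid \cF_t^X)+\E(\eta_{>t}\mid\cF_t^X)-\int_{(0,t]} f(\cdots)\,ds = \E(\eta_{>t}\mid\cF_t^X)$ (with the appropriate recombination). As noted in the remark preceding the theorem, conditioning on $\cF_t^X$ corresponds at the level of chaos kernels to multiplication by $\cf_{(0,t]^n}$. Because every $g\in\GG_t$ permutes $(0,t]$ within itself, this operation preserves $\GG_t[n]$-invariance of the kernels, so $\overline Y_t$ is $\GG_t$-invariant.

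\textbf{Step 3 (part (2)).} By linearity of the martingale representation, $\overline Z = \overline Z^{(\le t)}+\overline Z^{(>t)}$ a.e., where $\overline Z^{(\le t)}$ represents $\eta_{\le t}-\E\eta_{\le t}$. Since $\eta_{\le t}$ is $\cF_t^X$-measurable, the associated martingale is constant on $[t,1]$, giving $\overline Z^{(\le t)}|_{(t,1]\times \R}=0$ a.e. Thus on $(t,1]\times\R$ we have $\overline Z=\overline Z^{(>t)}$ a.e., and it suffices to show the latter has a $\mathcal{B}((t,1])\otimes\nH_{\GG_t}\otimes\nB(\R)$-measurable version. Let
\[ \mathcal{W}:=L_2\bigl((t,1]\times\Omega\times\R,\,\mathcal{B}((t,1])\otimes\nH_{\GG_t}\otimes\nB(\R),\,\lambda\otimes\P\otimes\mu\bigr), \]
viewed as a closed subspace of $L_2((t,1]\times\Omega\times\R,\lambda\otimes\P\otimes\mu)$, and let $\Theta\colon L_2(\cF^X)\to L_2((t,1]\times\Omega\times\R)$ map $\xi$ to the $(t,1]$-restriction of the integrand of $\xi-\E\xi$. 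By It\^o's isometry $\Theta$ is bounded linear, hence $\Theta^{-1}(\mathcal{W})$ is closed. By Theorem \ref{thm:main2}, $L_2(\Omega,\nH_{\GG_t},\P)$ is densely spanned by chaos elements $I_n(\ell_n)$ with symmetric, $\GG_t[n]$-invariant kernels $\ell_n$. For such $\xi=I_n(\ell_n)$, the Stroock/Clark--Ocone formula gives
\[ \Theta(\xi)_{s,x}=n\,I_{n-1}\bigl(\ell_n((s,x),\cdot)\,\cf_{(0,s)^{n-1}}\bigr),\qquad s\in(t,1], x\in\R. \]
Because every $g\in\GG_t$ fixes $s\in(t,1]$ and maps $(0,s)$ to itself, the kernel $\ell_n((s,x),\cdot)\,\cf_{(0,s)^{n-1}}$ is $\GG_t[n-1]$-invariant, so each slice is $\nH_{\GG_t}$-measurable. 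To obtain joint measurability, approximate $\ell_n$ in $L_2^n$ by kernels built from symmetrizations and $\GG_t[n]$-orbit averages of products $\cf_{B_1\times C_1}\otimes\cdots\otimes\cf_{B_n\times C_n}$ with $B_i$ dyadic; for these the integrand is an explicit finite sum of indicators in $(s,x)$ times lower-order multiple integrals of $\GG_t[n-1]$-invariant kernels, hence lies in $\mathcal{W}$. Passing to the $L_2$-limit and using that $\mathcal{W}$ is closed yields $I_n(\ell_n)\in\Theta^{-1}(\mathcal{W})$, and closure of $\Theta^{-1}(\mathcal{W})$ extends this to all of $L_2(\nH_{\GG_t})$. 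Applying to $\eta_{>t}$ completes part (2).

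\textbf{Main obstacle.} The delicate point is the final joint-measurability step in Step 3: the Clark--Ocone-type formula yields slice-wise $\nH_{\GG_t}$-measurability essentially for free, but the definition of $(\GG_t)_{t\in(0,1)}$-invariance demands a \emph{single} jointly measurable version on $(t,1]\times\Omega\times\R$. The closed-subspace argument, anchored on the dense subclass of step-function kernels where joint measurability is manifest, is what bridges this gap.
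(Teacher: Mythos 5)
Your proof is correct and follows essentially the same route as the paper: part (1) by substituting the invariant versions $Y^{\GG_t}$, $Z^{\GG_t}$, applying Fubini and the measurability of $f$, and using that conditioning on $\cF_t^X$ restricts invariant kernels to $(0,t]^n$ and so preserves $\GG_t$-invariance. Your Step 3, with the operator $\Theta$ and the closed subspace $\mathcal{W}$, is precisely the content of the paper's Lemma \ref{lemma:invariance_integrand}, which likewise writes the integrand explicitly for dyadic step kernels (as lower-order multiple integrals of $\GG_t[n-1]$-invariant restricted kernels) and passes to the $L_2$-limit inside the space of jointly $\mathcal{B}((t,1])\otimes\nH_{\GG_t}\otimes\mathcal{B}(\R)$-measurable functions.
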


In order to prove Theorem \ref{thm:GG-invariant-BSDE} we need the following lemma:

\begin{lemma}
\label{lemma:invariance_integrand}
Let $t\in (0,1)$ and
\[ G = \int_{(t,1]\times \R} Z_{s,x} \dd M(s,x) \in \bH_{\GG_t} \]
for some predictable $Z$ with
$\E \int_{(0,1]\times \R} |Z_{s,x}|^2 \dd\mm(s,x)<\infty$.
Then there is a 
$\mathcal{B}((t,1])\otimes \nH_{\GG_t}  \otimes \mathcal{B}(\R)$-measurable
            $Z^{\GG_t}\colon (t,1]\times \Omega\times \R \to \R$ with
$\E \int_{(t,1]\times \R} |Z_{s,x}-Z^{\GG_t}_{s,x}| \dd\mm(s,x)=0$.
\end{lemma}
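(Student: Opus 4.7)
The plan is to invoke Theorem \ref{thm:main2} to get a chaos expansion for $G$ with $\GG_t[n]$-invariant kernels and then read off the predictable integrand directly from those kernels. Write $G = \sum_{n\ge 1} I_n(f_n)$ with symmetric, $\GG_t[n]$-invariant $f_n$ (note $f_0 = 0$, since $\E G = 0$). Using the standard identity
\[ I_n(f_n) = n \int_{(0,1]\times \R} I_{n-1}\!\big(\cf_{(0,s)^{n-1}} f_n(\cdot,(s,x))\big) \dd M(s,x), \]
which holds for symmetric $f_n$ by the representation of $I_n$ as $n!$ times the iterated stochastic integral over the simplex, and summing over $n$, one obtains
\[ G = \int_{(0,1]\times \R} V_{s,x} \dd M(s,x), \qquad V_{s,x} := \sum_{n\ge 1} n I_{n-1}\!\big(\cf_{(0,s)^{n-1}} f_n(\cdot,(s,x))\big). \]
A Fubini computation combined with It\^o's isometry gives $\|V\|_{\nP_2}^2 = \sum_n n! \|f_n\|_{L_2^n}^2 = \|G\|_2^2 < \infty$. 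Since also $G = \int_{(t,1]\times \R} Z \dd M$, uniqueness of the predictable integrand forces $V = 0$ a.e.\ on $(0,t]\times \R$ and $V = Z$ a.e.\ on $(t,1]\times \R$.

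Next I would transfer the invariance from $f_n$ to each section $V_{s,x}$ with $s > t$. For every $g \in \GG_t$ one has $g(s) = s$ by definition, and since $g$ is a bijection of $(0,1]$ fixing $(t,1]$ pointwise it also maps $(0,t]$ onto itself; thus $g$ leaves $(0,s) = (0,t]\cup(t,s)$ invariant, so $\cf_{(0,s)^{n-1}}$ is $\GG_t[n-1]$-invariant. Inserting $(s,x)$ in the last slot of the $\GG_t[n]$-invariant kernel $f_n$ then produces a $\GG_t[n-1]$-invariant function of the remaining $n-1$ variables; symmetrizing preserves both this invariance and the value of $I_{n-1}$. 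Theorem \ref{thm:main2} then yields that $I_{n-1}(\cf_{(0,s)^{n-1}} f_n(\cdot,(s,x)))$ is $\nH_{\GG_t}$-measurable, so $V_{s,x}$ is $\nH_{\GG_t}$-measurable for each fixed $(s,x)\in (t,1]\times \R$.

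The remaining, and main, technical step is to upgrade this sectionwise $\nH_{\GG_t}$-measurability to joint $\mathcal{B}((t,1])\otimes \nH_{\GG_t}\otimes \mathcal{B}(\R)$-measurability. My plan is to approximate each $f_n$ in $L_2^n$ by $\GG_t[n]$-invariant step kernels: take the dyadic step approximations of $f_n$ and average them over the finite orbits of the action of $\GG_t \cap \nS_{2^d}$ at level $d$, which preserves both the step structure and the $\GG_t[n]$-invariance and remains dense in the subspace of symmetric $\GG_t[n]$-invariant $L_2^n$-kernels (by Theorem \ref{thm:main2}(4)). For any such simple invariant kernel the associated $V^{(k)}_{s,x}$ is a finite linear combination of Borel functions of $(s,x)$ multiplied by products of terms of the form $M(A)$ with $A$ being $\GG_t$-invariant dyadic set (hence $\nH_{\GG_t}$-measurable), and is therefore manifestly $\mathcal{B}((t,1])\otimes \nH_{\GG_t} \otimes \mathcal{B}(\R)$-measurable. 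Extracting an a.e.-convergent subsequence of the $L_2(\mm\otimes\P)$-convergent $V^{(k)}$ produces the required $Z^{\GG_t}$, which then satisfies $Z^{\GG_t} = V = Z$ a.e.\ on $(t,1]\times\R\times\Omega$ and so in particular $\E\int_{(t,1]\times \R} |Z - Z^{\GG_t}| \dd \mm = 0$. The crux of the argument — and the main obstacle — is this final joint-measurability step; the explicit formula only yields measurability of each section $V_{s,x}$, and the orbit-averaged approximation is what converts this into a version with the product-measurability demanded by the definition of $(\GG_t)_{t\in(0,1)}$-invariance.
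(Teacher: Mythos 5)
Your overall strategy coincides with the paper's: expand $G$ with symmetric kernels that are constant on the orbits of $\GG_t[n]$, read off the predictable integrand through the iterated-integral representation $I_n(f_n)=n\int I_{n-1}(\cf_{(0,s)^{n-1}}f_n(\cdot,(s,x)))\,\dd M(s,x)$, and obtain the joint measurability by approximating the kernels with simple invariant ones. The first two stages are correct (in particular your observation that every $g\in\GG_t$ maps $(0,t]$ onto itself and fixes $(t,1]$ pointwise, so that $\cf_{(0,s)^{n-1}}$ and the sections $f_n(\cdot,(s,x))$ are $\GG_t[n-1]$-invariant, is exactly what is needed), and you correctly identify that sectionwise $\nH_{\GG_t}$-measurability of $V_{s,x}$ does not by itself give $\cB((t,1])\otimes\nH_{\GG_t}\otimes\cB(\R)$-measurability.

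The gap is in the approximation you propose for that last step. Projecting $f_n$ onto the level-$d$ dyadic grid in \emph{all} time variables and then averaging over the finite subgroup $\GG_t^{(d)}$ of elements of degree at most $d$ does \emph{not} preserve $\GG_t[n]$-invariance: for $g\in\GG_t$ with $\deg(g)>d$ one has $\E(f_n\mid\cF_d^{\otimes n})\circ g[n]=\E(f_n\mid g[n]^{-1}\cF_d^{\otimes n})\neq\E(f_n\mid\cF_d^{\otimes n})$ in general (a degree-$(d{+}1)$ permutation inside $(0,t]$ splits level-$d$ cells), and averaging over the subgroup $\GG_t^{(d)}$ only restores invariance under that subgroup. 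Consequently each $V^{(k)}$ is merely $\cB((t,1])\otimes\nH_{\GG_t^{(d_k)}}\otimes\cB(\R)$-measurable, and the a.e.\ limit is measurable with respect to $\bigcap_k\big(\cB((t,1])\otimes\nH_{\GG_t^{(d_k)}}\otimes\cB(\R)\big)$, which cannot be identified with $\cB((t,1])\otimes\nH_{\GG_t}\otimes\cB(\R)$ (intersections of product $\sigma$-algebras do not commute with forming products). The paper's proof repairs precisely this point: the approximating grid in time is $\nG^t_N=\cB((0,t])\vee\sigma(I^t_{N,k}:k\le 2^N)$, i.e.\ the time variable is discretized \emph{only on} $(t,1]$ while the full Borel structure on $(0,t]$ is kept; since every $g\in\GG_t$ preserves this grid, the projections $A^{t,N}_{k_1,\ldots,k_n\atop l_1,\ldots,l_n}f_n$ remain genuinely $\GG_t[n]$-invariant, and after discarding the diagonal terms over $(t,1]$ the resulting $Z^0_{s,x}=nI_{n-1}\big(f_n^0(\ldots,(t^0,x^0))\cf_{(0,T_{n-1}]^{n-1}}\big)\cf_{I^t_{N,k_n}\times J_{N,l_n}}(s,x)$ is manifestly of product form with an $\nH_{\GG_t}$-measurable random factor that is constant in $(s,x)$ on each cell. (Two smaller inaccuracies in the same step: the random factor is $I_{n-1}$ of an invariant $(n{-}1)$-variable kernel, not in general a product of $M(A)$'s with individually invariant $A$; and without removing the diagonal terms the factor $I_{n-1}(\cf_{(0,s)^{n-1}}\cdots)$ still depends on $s$ within a cell.)
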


\begin{proof}
Assume that $G= \sum_{n=1}^\infty I_n(f_n)$, where the $(f_n)_{n\in \NN}$ are symmetric chaos kernels 
that are constant on the orbits of $\GG_t[n]$ on $((0,1]\times \R)^n$ (see Theorem \ref{thm:main2}(4)).
For an integer $N\ge 0$ define 
\[ I_{N,0}^t := (0,t]
   \sptext{1}{and}{1}
   I_{N,k}^t  := \left (t+(1-t)\frac{k-1}{2^N},t+(1-t)\frac{k}{2^N} \right ] \]
for $k=1,\ldots,2^N$, and
\[  J_{N,0}   := \{ 0 \}, \quad
    J_{N,l}   := \left [ \frac{l-1}{2^N},\frac{l}{2^N} \right ), 
    \sptext{1}{and}{.5}
    J_{N,m}   := \left ( \frac{m-1}{2^N},\frac{m}{2^N} \right ] \]
for $l=0,-1,\ldots$ and $m=1,2,\ldots$ The corresponding
$\sigma$-algebras are given by
\[    \nG^t_N 
   := \cB((0,t]) \vee \sigma 
      \left ( I_{N,k}^t : k=1,\ldots,2^N \right )
   \sptext{1}{and}{1}
   \nS_N
   := \sigma \left ( J_{N,l}: l\in \Z \right ). \]
We have that 
$    (\nB((0,1])\otimes \nB(\R))^{\otimes n}
   = \bigvee_{N\ge 0} (\nG^t_N \otimes \nS_N)^{\otimes n}$.
For $P_{N,k}^t \colon L_2((0,1])\to L_2((0,1])$ and $Q_{N,l} \colon L_2(\R,\mu)\to L_2(\R,\mu)$
given by 
$P_{N,0}^t g := \cf_{I_{N,0}^t} g$, 
$Q_{N,0} h   := \cf_{J_{N,0}} h(0)$,
\[ P_{N,k}^t g := \cf_{I_{N,k}^t} \int_{I_{N,k}^t} g(s) \frac{\dd s}{| I_{N,k}^t |},
   \sptext{1}{and}{1}
   Q_{N,l} h   := \cf_{J_{N,l}} \int_{J_{N,l}} h(x) \frac{\dd\mu(x)}{\mu(J_{N,l})},
\]
where $k=1,\ldots,2^N$ and $l\in \Z\setminus \{0\}$, and where
we agree about $ Q_{N,l}:=0$ if $\mu(J_{N,l})=0$,
we define point-wise
\[ \E \left ( f_n \left| (\nG^t_N \otimes \nS_N)^{\otimes n} \right.\right)
   := \sum_{k_1=0}^{2^N} \sum_{l_1=-\infty}^\infty
           \cdots \sum_{k_n=0}^{2^N} \sum_{l_n=-\infty}^\infty
            \left [ A^{t,N}_{k_1,\ldots,k_n \atop l_1,\ldots,l_n} f_n \right ]
      \colon ((0,1]\times\R)^n \to \R \] 
with
\[ A^{t,N}_{k_1,\ldots,k_n \atop l_1,\ldots,l_n} :=  [P_{N,k_1}^t \otimes Q_{N,l_1}] \otimes  \cdots \otimes 
                             [P_{N,k_n}^t \otimes Q_{N,l_n}].  \]
Using the $\sigma$-finiteness of $\mu$, dominated convergence in the sequence space $\ell_1$, 
and martingale convergence, one sees that 
$\E \left ( f_n | (\nG^t_N \otimes \nS_N)^{\otimes n} \right )$ converges to $f_n$ in 
$L_2^n$ as $N\to \infty$.
It is easy to verify (cf.\ \cite{ito:56}) that we can exclude the diagonal terms related to the time interval $(t,1]$ in the 
following sense: We can approximate $f_n$ in $L_2^n$ by finite sums where each summand
is of form
 \[ f_n^0 := \sum_{\pi \in S_n}            
           A^{t,N}_{k_{\pi(1)},\ldots,k_{\pi(n)} \atop l_{\pi(1)},\ldots,l_{\pi(n)}} f_n  \]
for some 
$0 = k_1 = \cdots = k_{n_0} < k_{n_0+1} < \cdots < k_n \le 2^N$ with $n_0 \in \{0,\ldots,n-1\}$ 
and $l_1,\ldots,l_n\in \Z$. The case $n_0=0$ means that $k_1\ge 1$, the case $n_0=n$ can be excluded 
as it would imply $I_n (f_n^0) = 0$ a.s.\ because $\E (G| \nF_t^X)=0$ a.s.\ by our assumption.
For $(s,x)\in (0,1]\times \R$ we set
\[ 
    Z^0_{s,x} 
    :=  n I_{n-1} \left ( 
             f_n^0(\ldots,(t^0,x^0))  
             \cf_{(0,T_{n-1}]^{n-1}}
              \right ) 
             \cf_{I^t_{N,k_n}\times J_{N,l_n}}(s,x)
    \sptext{1}{with}{.5}
    T_{n-1} := t+ (1-t)\frac{k_{n-1}}{2^N} \]
and arbitrary $(t^0,x^0)\in I^t_{N,k_n}\times J_{N,l_n}$.
 By construction,  $Z^0\colon (0,1]\times \Omega \times \R\to \R$ is predictable and the 
restriction   $Z^0\colon (t,1]\times \Omega \times \R\to \R$ is 
$\mathcal{B}((t,1])\otimes \nH_{\GG_t}\otimes \mathcal{B}(\R)$-measurable as 
$ f_n^0 (\ldots,(t^0,x^0))  \cf_{(0,T_{n-1}]^{n-1}}$ is $\GG_t[n-1]$-invariant by construction.
Finally, we have that 
\[   
     I_n(f_n^0                              ) 
   = \int_{(t,1]\times\R} Z_{s,x}^0 \dd M(s,x)
     \mbox{ a.s.} \]
This can be extended to finite linear combinations that approximate $f_n$ in $L_2^n$.
The corresponding restrictions of $Z$-processes form a Cauchy sequence in
$L_2\big((t,1]\times \Omega\times \R, \mathcal{B}((t,1])\otimes \nH_{\GG_t} \otimes \mathcal{B}(\R), 
         \lambda\otimes\P\otimes \mu\big)$
and we find a $\mathcal{B}((t,1])\otimes \nH_{\GG_t}\otimes \mathcal{B}(\R)$-measurable limit as well.
\end{proof}
\bigskip

\begin{proof}[Proof of Theorem \ref{thm:GG-invariant-BSDE}]
(1) We fix $t\in (0,1)$, replace $(Y_s)_{s\in (t,1]}$ by $(Y_s^{\GG_t})_{s\in (t,1]}$,
$(Z_{s,x})_{(s,x)\in (t,1]\times \R}$ 
by $(Z_{s,x}^{\GG_t})_{(s,x)\in (t,1]\times \R}$, and have
\[   \overline Y_t 
   = \E \left (\left. F + \int_{(t,1]} f\left (s,Y_s^{\GG_t},\left (\int_\R Z_{s,x}^{\GG_t}h_k(x) \dd\mu(x)\right )_{k=1}^N\right ) \dd s\, \right|\, \cF_t \right )
   \mbox{ a.s.} \]
as well. By Fubini's theorem the processes 
$\int_\R Z_{\cdot,x}^{\GG_t}h_k(x) \dd\mu(x) \colon (t,1]\times \Omega\to \R$
are $\nB((t,1])\otimes \nH_{\GG_t}$-measurable for $k=1,\ldots,N$. Therefore,
$\int_{(t,1]} f\left (s,Y_s^{\GG_t},\left (\int_\R Z_{s,x}^{\GG_t}h_k(x) \dd\mu(x) \right )_{k=1}^N\right ) \dd s$ is
$\nH_{\GG_t}$-measurable and finally
$\overline Y_t$ is $\GG_t$-invariant.
\medskip

(2) It follows from the definition of $(\overline Y,\overline Z)$ that
\[
     \int_{(t,1]\times\R} \overline Z_{s,x} \dd M(s,x)
   = F  + \int_{(t,1]} f\left (s,Y_s,\left (\int_\R Z_{s,x} h_k(x) \dd\mu(x) \right )_{k=1}^N\right ) \dd s
 - \overline Y_t \mbox{ a.s.}
\]
Again we replace $(Y_s)_{s\in (t,1]}$ by $(Y_s^{\GG_t})_{s\in (t,1]}$,
$(Z_{s,x})_{(s,x)\in (t,1]\times \R}$  by $(Z_{s,x}^{\GG_t})_{(s,x)\in (t,1] \times \R}$,
and deduce by step (1) that $\int_{(t,1]} \int_\R \overline Z_{s,x} \dd M(s,x)$ is $\GG_t$-invariant.
We conclude by Lemma \ref{lemma:invariance_integrand}.
\end{proof}

{\sc Application.} 
We fix the data $(F,f)$ of a BSDE  such that 
\begin{enumerate}[($P_1$)] 
\item $(F,f)$  satisfy conditions (1), (2), and (3) 
      listed before Definition \ref{definition:D_f^h},
\end{enumerate}
and a Picard scheme such that the following is satisfied:
\begin{enumerate}[($P_1$)]
 \setcounter{enumi}{1}
\item $(Y^k,Z^k)\in \nD_f^{h_1,\ldots,h_N}$ for $k=0,1,2,\ldots$
\item $Y^{k+1} = A_{F,f}(Y^k,Z^k)$ and  $Z^{k+1} = B_{F,f}(Y^k,Z^k)$ for $k=0,1,2,\ldots$
\item There is a sub-sequence $0 \le k_1 < k_2 < \cdots $ such that 
      $Y^{k_l} \to Y$ $\lambda\otimes\P$-a.e.\ and $Z^{k_l} \to Z$ $\lambda\otimes\P\otimes\mu$-a.e.\
      as $l\to \infty$, where $Y$ is adapted and c\`adl\`ag and $Z\in \nP_2$.
\item $\int_{(0,1]} |f\big(s,Y_s,(\int_\R Z_{s,x}h_k(x) \dd\mu(x))_{k=1}^N\big)| \dd s \in L_2(\nF^X)$.
\item The pair $(Y,Z)$ satisfies BSDE \eqref{eqn:BSDE:Levy}.
\end{enumerate}
If the initial data $(F,f)$ of the BSDE satisfy the invariance conditions (i) and (ii) of 
Theorem \ref{thm:GG-invariant-BSDE} and if the initial processes $(Y^0,Z^0)$ in the Picard scheme
are $(\GG_t)_{t\in (0,1)}$-invariant, then 
$Y_t$ is $\GG_t$-invariant for all $t\in [0,1]$ and the $Z$-process is $(\GG_t)_{t\in (0,1)}$-invariant.
These invariances can be verified as follows:
\medskip

(a) The $Z$-process: By Theorem \ref{thm:GG-invariant-BSDE} we know that all $Z^k$, $k\in \NN$, 
are $(\GG_t)_{t\in (0,1)}$-invariant.
We fix some $t\in (0,1)$ and get that the restrictions
$Z^{k_l}\colon(t,1]\times \Omega\times \R\to \R$ converge a.e. Replacing the restrictions by 
$(Z^{k_l})^{\GG_t}\colon(t,1]\times \Omega\times \R\to \R$ it follows that 
$(Z^{k_l})^{\GG_t}\colon(t,1]\times \Omega\times \R\to \R$ converge to $Z\colon(t,1]\times \Omega\times \R\to \R$ a.e.
Therefore we find a $Z^{\GG_t}\colon(t,1]\times \Omega\times \R\to \R$ which is 
$\nB((t,1])\otimes \nH_{\GG_t}\otimes \nB(\R)$-measurable which coincides a.e.\ with the restriction
of $Z$.
\smallskip

(b) The $Y$-process: 
By Theorem \ref{thm:GG-invariant-BSDE} and our assumptions, $Y_t^k$ is $\GG_t$-invariant for $k\ge 1$ and 
$t\in (0,1)$. As $Y_1^k=F$ a.s., this extends to $t=1$.
By the c\`adl\`ag property of the $Y^k$ the restrictions 
$Y^{k_l}\colon(t,1]\times \Omega\to \R$ are $\nB((t,1])\otimes \nH_{\GG_t}$-measurable.
Therefore, by the arguments used in (a) we get that there is a $Y^{\GG_t}\colon(t,1]\times \Omega \to \R$ which
is $\nB((t,1])\otimes \nH_{\GG_t}$-measurable with $\E \int_{(t,1]} |Y_s - Y_s^{\GG_t}| \dd s =0$.
Consequently, $Y$ and $Z$ are $(\GG_t)_{t\in (0,1)}$-invariant.
Applying Theorem \ref{thm:GG-invariant-BSDE} to the BSDE \eqref{eqn:BSDE:Levy} gives that 
$Y_t$ is $\GG_t$-invariant for $t\in (0,1)$. 
\medskip

\begin{rem}
For $N=1$ the above conditions ($P_1$)-($P_6$) are fulfilled for example in \cite[Lemma 2.4]{tang:li:94} 
(see also \cite[Theorem 2.2 and pp. 34-35]{geiss:steinicke:1212.3420v3}) in the case of Lipschitz BSDEs.
Moreover, in \cite{geiss:steinicke:1212.3420v3,geiss:steinicke:13} 
$(F,f)$ and $(Y^0,Z^0)$ have the suitable invariance properties with respect to $\GG$ generated 
by $\mpmd_{(r_{l-1},r_l]}$, $l=1,\ldots,L$, for some fixed partition $0=r_0 < r_1 < \cdots < r_L=1$.
In particular, the generator has the form
\[ f(s,\omega,y,z) := f_0(s,X_s(\omega),y,z) \]
where $f_0\colon [0,1]\times \R^3\to \R$ is Lipschitz in $(x,y,z)$ uniformly in $s$ and continuous in $(s,x,y,z)$. 
Therefore, $(f_0(s,X_s,y,z))_{s\in [0,1]}$ is a c\`adl\`ag process and for all $t\in (0,1)$ and $(y,z)\in \R^2$
the restricted generator $f(\cdot,\cdot,y,z):(t,1]\times \Omega\to \R$ is 
$\nB((t,1])\otimes \nH_{\GG_t}$-measurable.
\end{rem}


\begin{appendix}
\section{Invariant sets}
\label{app:invariant_sets}

We recall concepts related to classical ergodic theory (see \cite[Chapter 10]{kallenberg:02} or 
\cite[Chapter V]{shiryaev:96}) and adapt them to our setting. 
The proofs of Lemmas~\ref{lem:equiinv} and~\ref{lemma:quasi_atoms} are omitted (for convenience 
they can be found in \cite{baumgartner:geiss:14:arxiv}) as the assertions are standard.

We assume a measurable space $(S,\Sigma)$ and a group 
$\AA$ of automorphisms of $S$, i.e.\ bijective bi-measurable functions $T\colon\,S\rightarrow S$.

\begin{defin}\label{definition:invariant_sigma_algebra}
The {\em invariant} $\s$-algebra w.r.t.\ $\AA$ is given by
      \[ \nI(\AA):=\{B\in\Sigma\,\colon\,B=T^{-1}(B)\text{ for all } T\in \AA\}. \]
\end{defin}
\medskip

\begin{lemma}\label{lem:equiinv}
For a function $\xi\colon\,S\arre \RR$ the following assertions are equivalent:
\begin{enumerate}
\item\label{it:nImeas} $\xi$ is $\nI(\AA)$-measurable.
\item\label{it:sigmameasconst} $\xi$ is $\Sigma$-measurable and constant on the orbits
      $\{ Ts : T\in \AA \}$, $s\in S$.
\item\label{it:sigTa} $\xi$ is $\Sigma$-measurable and $\xi\circ T=\xi$ for all $T\in\AA$.
\end{enumerate}
\end{lemma}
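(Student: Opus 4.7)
The plan is to establish the cycle $(1) \Rightarrow (3) \Rightarrow (2) \Rightarrow (1)$, with the equivalences $(2) \Leftrightarrow (3)$ essentially unpacking the definition of an orbit. None of the three implications requires deep machinery; this is a standard measurability/invariance translation, so my main concern is simply presenting it cleanly rather than overcoming any real obstacle.

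First I would dispose of $(3) \Leftrightarrow (2)$. Given $T \in \AA$ and $s \in S$, the points $s$ and $Ts$ lie in the same orbit, so if $\xi$ is constant on orbits then $\xi(Ts) = \xi(s)$, proving $(2) \Rightarrow (3)$. Conversely, if $\xi \circ T = \xi$ for every $T \in \AA$ and $s' = T_0 s$ for some $T_0 \in \AA$, then $\xi(s') = \xi(T_0 s) = \xi(s)$, so $\xi$ is constant on $\{Ts : T \in \AA\}$.

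For $(3) \Rightarrow (1)$, assume $\xi$ is $\Sigma$-measurable with $\xi \circ T = \xi$ for all $T \in \AA$. For a Borel set $B \subseteq \mathbb{R}$ and $T \in \AA$,
\[
   T^{-1}\bigl(\xi^{-1}(B)\bigr) = (\xi \circ T)^{-1}(B) = \xi^{-1}(B),
\]
so $\xi^{-1}(B) \in \nI(\AA)$; hence $\xi$ is $\nI(\AA)$-measurable.

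The only step that needs a small extra thought is $(1) \Rightarrow (3)$, where I must recover a pointwise identity from a preimage identity. Given $T \in \AA$ and any Borel $B$, the invariance $\xi^{-1}(B) \in \nI(\AA)$ yields $T^{-1}(\xi^{-1}(B)) = \xi^{-1}(B)$, which rewrites as $(\xi \circ T)^{-1}(B) = \xi^{-1}(B)$. Specializing to $B = (-\infty, c]$ for each $c \in \mathbb{Q}$ shows $\xi(Ts) \leq c \Leftrightarrow \xi(s) \leq c$ for every $s$, and letting $c$ vary over the rationals forces $\xi(Ts) = \xi(s)$ pointwise, establishing $\xi \circ T = \xi$. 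Combined with the already-proved implication $(3) \Rightarrow (2)$ this closes the cycle.
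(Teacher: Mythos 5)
Your proof is correct and is exactly the standard argument; the paper itself omits the proof of this lemma (deferring to the arXiv version and calling the assertion standard), so there is nothing to contrast it with. The key step $(1)\Rightarrow(3)$ via the sublevel sets $\xi^{-1}((-\infty,c])$ for rational $c$ is handled properly, and the remaining implications are the routine translations you describe.
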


Let $(S,\Sigma,\gamma)$ be a $\sigma$-finite measure space with $\gamma(S)>0$, $\AA$ be a group of automorphisms acting on $S$, and
\[ \conj{\nI(\AA)}:=\nI(\AA)\vee \nN
   \sptext{1}{where}{1}
   \nN:=\{B\in \Sigma\,\colon\,\gamma(B)=0\}. \]
The equivalence class of $\xi$ w.r.t.\ to the $\gamma$-a.e.-equivalence is denoted by $[\xi]$.
\begin{defin}
The measure $\gamma$ is called \emph{quasi-invariant} w.r.t.\ $\AA$, if $\gamma(T^{-1}B)=0$ for all 
$B\in\nN$ and $T\in\AA$.
\end{defin}

\begin{lemma}\label{lemma:invariant_with_measure}
Let $(S,\Sigma,\gamma)$ be a $\sigma$-finite measure space with $\gamma(S)>0$ and $\AA$ be a group of automorphisms acting on $S$.
Then one has the following assertions:
\begin{enumerate}
\item The operation $[\xi]\circ T := [\xi\circ T]$ is well-defined for all $T\in \AA$ and 
      $\Sigma$-measurable $\xi\colon S\to \R$ if and only if
      $\gamma$  is quasi-invariant w.r.t.\ $\AA$.
\item Let $\gamma$ be quasi-invariant w.r.t.\ $\AA$ and $\AA$ be countable. Then
      $[\xi]\circ T = [\xi]$ for all $T\in\AA$ if and only if 
      $\xi$ is $\conj{\nI(\AA)}$-measurable.
\end{enumerate}
\end{lemma}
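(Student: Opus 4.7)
My plan is to treat the two parts separately, with Part~(1) essentially a set-theoretic tautology, the easy half of Part~(2) a direct appeal to Lemma~\ref{lem:equiinv}, and the converse in Part~(2) carrying the real content. For Part~(1), the key observation is the pointwise identity
\[ \{\xi_1\circ T\neq \xi_2\circ T\}=T^{-1}\{\xi_1\neq\xi_2\}, \]
valid for all $\Sigma$-measurable $\xi_1,\xi_2\colon S\to\R$ and $T\in\AA$. If $\gamma$ is quasi-invariant and $\xi_1=\xi_2$ $\gamma$-a.e., then $B:=\{\xi_1\neq\xi_2\}\in\nN$, hence $\gamma(T^{-1}B)=0$ and so $[\xi_1\circ T]=[\xi_2\circ T]$. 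Conversely, if quasi-invariance fails I pick $T\in\AA$ and $B\in\nN$ with $\gamma(T^{-1}B)>0$ and take $\xi_1:=\cf_B,\,\xi_2:=0$: then $[\xi_1]=[\xi_2]$ while $\xi_1\circ T=\cf_{T^{-1}B}$ is not $\gamma$-a.e.\ zero, defeating well-definedness.

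For the ``$\Leftarrow$'' direction of Part~(2), I would pick an $\nI(\AA)$-measurable representative $\xi'$ of $[\xi]$; by Lemma~\ref{lem:equiinv} one has $\xi'\circ T=\xi'$ pointwise for every $T\in\AA$, and Part~(1) lets me conclude $[\xi]\circ T=[\xi'\circ T]=[\xi']=[\xi]$.

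The heart of the proof is the converse direction in Part~(2), where I must lift the family of a.e.\ invariances $\xi\circ T=\xi$ $\gamma$-a.e.\ ($T\in\AA$) to a single \emph{pointwise} invariance after modifying $\xi$ on a null set, all the while preserving $\Sigma$-measurability. My first step is to collect all the exceptional sets
\[ M:=\bigcup_{T\in\AA}\{s\in S:\xi(Ts)\neq\xi(s)\}, \]
which lies in $\nN$ by countability of $\AA$. Since $M$ need not be $\AA$-invariant, I enlarge it to its $\AA$-saturation $M^{**}:=\bigcup_{T\in\AA} T(M)$. Each $T\in\AA$ is bi-measurable, so $T(M)=(T^{-1})^{-1}(M)\in\Sigma$; since $\AA$ is a group, $T^{-1}\in\AA$, and quasi-invariance applied to $T^{-1}$ gives $\gamma(T(M))=0$, whence $\gamma(M^{**})=0$ again by countability. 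The group identity $\{T_0T:T\in\AA\}=\AA$ yields $T_0(M^{**})=M^{**}$ and hence $T_0^{-1}(M^{**})=M^{**}$ for every $T_0\in\AA$, so $M^{**}\in\nI(\AA)$. Setting $\xi':=\xi\,\cf_{(M^{**})^c}$, one verifies $\xi'\circ T=\xi'$ pointwise: for $s\in M^{**}$ both sides vanish since $Ts\in M^{**}$; for $s\notin M^{**}$ one has $s\notin M$ (as $\id\in\AA$ forces $M\subseteq M^{**}$), so $\xi(Ts)=\xi(s)$, while $Ts\notin M^{**}$ by invariance. Lemma~\ref{lem:equiinv} then gives $\xi'\in\nI(\AA)$, and $\xi=\xi'$ off the null set $M^{**}$ shows that $\xi$ is $\conj{\nI(\AA)}$-measurable.

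The main obstacle is precisely the construction of this $\AA$-invariant null set $M^{**}$ containing the non-invariant null set $M$: it is here that all three hypotheses---the group structure of $\AA$, the quasi-invariance of $\gamma$, and the countability of $\AA$---are simultaneously essential.
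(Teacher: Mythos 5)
Your proof is correct and follows essentially the same route as the paper's: part (1) via the identity $\{\xi_1\circ T\neq\xi_2\circ T\}=T^{-1}\{\xi_1\neq\xi_2\}$ together with indicators of null sets, and the hard direction of part (2) by redefining $\xi$ to vanish on an exceptional $\AA$-invariant null set and then invoking Lemma \ref{lem:equiinv}. The one (harmless) difference is that your saturation step $M^{**}=\bigcup_{T\in\AA}T(M)$ is redundant: the set $M=\bigcup_{T\in\AA}\{\xi\circ T\neq\xi\}$ is already $\AA$-invariant by the group property of $\AA$ (the paper works directly with its complement $S_0$ and notes this invariance is ``standard to check''), so $M^{**}=M$ and the appeal to quasi-invariance at that point is not actually needed.
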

\begin{proof}
(1) Assume that  $\gamma$  is quasi-invariant and that  $\xi\colon S\to \R$ is $\Sigma$-measurable.
Then for $\xi_1,\xi_2\in[\xi]$ it holds that $\gamma(\xi_1\neq\xi_2)=0$ and the set 
\[ \{s\,\colon\,\xi_1(Ts) \neq \xi_2(Ts)\}=\{T^{-1}t\,\colon\,\xi_1(t)\neq\xi_2(t)\} \]
has measure zero as well so that the operator $[\xi]\mapsto [\xi]\circ T$ is well-defined. 
For the other implication let $B$ be of measure zero and $\xi:= \cf_B$ so that $[\xi]=0$. 
By assumption, $[\xi\circ T]=0$ and 
\[ 0=\gamma(\{s:\cf_B(Ts)\not =0\})=\gamma(T^{-1}(B)). \]
(2) If there exists an $\nI(\AA)$-measurable $\xi_0\in[\xi]$ it is obvious that the equivalence class is 
 invariant by (1) and Lemma~\ref{lem:equiinv}. Conversely, let $[\xi\circ T]=[\xi]$ for all $T\in\AA$. Define 
\[ S_0:=\big\{s\in S\,\colon\, \xi\circ T(s)=\xi(s) \mbox { for all } T\in \AA \big\} 
       = \bigcap_{T\in\AA}\{s\in S\,\colon\,\xi\circ T(s)=\xi(s)\}, \]
which is a set of co-measure zero because $\AA$ is countable. 
It is standard to check that $S_0\in \nI(\AA)$.
Setting $\xi_0(s):=\xi(s) \cf_{S_0}(s)$,
we obtain from Lemma \ref{lem:equiinv} that $\xi_0$ is $\nI(\AA)$-measurable and $\gamma$-a.e.\ equal to $\xi$.
\end{proof}

\begin{defin}\label{definition:quasi-atom}
Let $(S,\cI,\gamma)$ be a $\sigma$-finite measure space with $\gamma(S)>0$.
A set $A\in \cI$ with $\gamma(A)>0$ 
is called {\em quasi-atom} provided
that $B\subseteq A$ with $B\in \cI$ implies that
\[ \gamma(B)=0 \sptext{1}{or}{1} \gamma(A\setminus B)=0. \]
\end{defin}

\begin{lemma}\label{lemma:quasi_atoms}
Let $(S,\cI,\gamma)$ be a $\sigma$-finite measure space with $\gamma(S)>0$ and $A,A_1,A_2$ be quasi-atoms.
\begin{enumerate}
\item If $B\in \cI$ and $\gamma(A\Delta B)=0$, then $B$ is a quasi-atom.
\item If $A_1\subseteq A_2$, then $\gamma(A_2\setminus A_1)=0$.
\item Either $\gamma(A_1\cap A_2)=0$ or $\gamma(A_1\Delta A_2)=0$.
\item There exist countably many pairwise disjoint quasi-atoms $(A_i)_{i\in I}$ such that
      $S\setminus (\bigcup_{i\in I} A_i)$ does not contain any quasi-atom. For any 
      quasi-atom $A$ there is an $i\in I$ such that $\gamma (A\Delta A_i)=0$.
\end{enumerate}
\end{lemma}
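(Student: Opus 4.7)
Parts (1)--(3) are quick consequences of the quasi-atom definition. For (1), any measurable $C\subseteq B$ satisfies $C\cap A\subseteq A$, so the quasi-atom property of $A$ gives $\gamma(C\cap A)=0$ or $\gamma(A\setminus C)=0$; combining with $\gamma(A\Delta B)=0$ yields $\gamma(C)=0$ or $\gamma(B\setminus C)=0$, and $\gamma(B)=\gamma(A)>0$. For (2), apply the quasi-atom property of $A_2$ to $A_1\subseteq A_2$, noting that $\gamma(A_1)=0$ is excluded since $A_1$ is itself a quasi-atom. For (3), applying the quasi-atom property of $A_1$ to $A_1\cap A_2$ yields $\gamma(A_1\cap A_2)=0$ or $\gamma(A_1\setminus A_2)=0$, and symmetrically for $A_2$; if $\gamma(A_1\cap A_2)>0$, both $\gamma(A_1\setminus A_2)=0$ and $\gamma(A_2\setminus A_1)=0$ follow, giving $\gamma(A_1\Delta A_2)=0$.

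The substance lies in (4). First, by $\sigma$-finiteness, I write $S=\bigcup_n S_n$ with $\gamma(S_n)\in(0,\infty)$ and pass to the equivalent finite measure $\tilde\gamma(B):=\sum_n 2^{-n}\gamma(B\cap S_n)/(1+\gamma(S_n))$. This measure has the same null sets as $\gamma$, so the quasi-atoms of $(S,\cI,\gamma)$ and $(S,\cI,\tilde\gamma)$ coincide. I then build $(A_n)$ greedily: set $\alpha_1:=\sup\{\tilde\gamma(B):B\text{ is a quasi-atom}\}$ and pick $A_1$ with $\tilde\gamma(A_1)\geq\alpha_1/2$; having chosen $A_1,\dots,A_{n-1}$, let
\[ \alpha_n:=\sup\{\tilde\gamma(B):B\text{ is a quasi-atom with }\gamma(B\cap A_k)=0\text{ for }k<n\}, \]
and pick $A_n$ with $\tilde\gamma(A_n)\geq\alpha_n/2$ (terminating if $\alpha_n=0$). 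By construction and part (3), the $A_n$ are pairwise essentially disjoint, hence $\sum_n\tilde\gamma(A_n)\leq\tilde\gamma(S)<\infty$, so $\tilde\gamma(A_n)\to 0$ and therefore $\alpha_n\to 0$.

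To turn essential disjointness into genuine disjointness, I set $A_n':=A_n\setminus\bigcup_{m\neq n}A_m$; since the family is countable and pairwise intersections are $\gamma$-null, $\gamma(A_n\Delta A_n')=0$, so by (1) each $A_n'$ is a quasi-atom and the $A_n'$ are disjoint. For any quasi-atom $A$, part (3) gives for every $n$ either $\gamma(A\cap A_n)=0$ or $\gamma(A\Delta A_n)=0$; in the latter case $\gamma(A\Delta A_n')=0$ as required. If the latter fails for all $n$, then $A$ is essentially disjoint from every $A_n$ and so is admissible in every supremum defining $\alpha_n$, yielding $\tilde\gamma(A)\leq\alpha_n\to 0$ and contradicting $\tilde\gamma(A)>0$. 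The same argument handles the complement: a hypothetical quasi-atom $A'\subseteq S\setminus\bigcup_n A_n'$ satisfies $\gamma(A'\cap A_n)\leq\gamma(A_n\setminus A_n')=0$ for every $n$, leading to the same contradiction. I expect the main obstacle to be arranging the greedy selection so that it simultaneously terminates (via summability in $\tilde\gamma$) and exhausts every equivalence class of quasi-atoms; the reduction to an equivalent \emph{finite} measure is the decisive device that makes both ends of the argument work uniformly.
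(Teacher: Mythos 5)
Your proof is correct. The paper omits its own proof of this lemma as standard (deferring to the arXiv version), and your argument --- parts (1)--(3) read off directly from the definition, then for (4) a reduction to an equivalent finite measure, a greedy exhaustion by essentially disjoint quasi-atoms whose measures are forced to zero by summability, and a final disjointification justified by part (1) --- is exactly the standard exhaustion argument one would supply; the only trivial loose ends are the case where the supremum defining $\alpha_n$ runs over the empty family (then the selection terminates and any remaining quasi-atom would contradict that emptiness directly) and the remark that $\tilde\gamma(A)>0$ for every quasi-atom $A$ because $\tilde\gamma$ and $\gamma$ share null sets.
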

\medskip

\begin{lemma}
Let $(S,\Sigma,\gamma)$ be a $\sigma$-finite measure space with $\gamma(S)>0$ and $\AA$ be a group of automorphisms of $S$ 
such that $(S,\nI(\AA),\gamma)$ is $\sigma$-finite. Assume that $(A_i)_{i\in I}\subseteq \nI(\AA)$ is a 
countable collection of quasi-atoms like in Lemma \ref{lemma:quasi_atoms}~(4).
Then for a function $\xi\colon\,S\arre \RR$ the following assertions are equivalent:
\begin{enumerate}
\item $\xi$ is $\conj{\nI(\AA})$-measurable.
\item There exists a $\Sigma$-measurable $\eta$ which is constant on the orbits and the quasi-atoms
      $(A_i)_{i\in I}$  and such that $\eta=\xi$ $\gamma$-a.e.
\end{enumerate}
\end{lemma}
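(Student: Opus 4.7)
The plan is to split the proof into the two implications, with (2)$\Rightarrow$(1) being nearly immediate and (1)$\Rightarrow$(2) requiring a modification of the $\nI(\AA)$-measurable representative on null sets.

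For (2)$\Rightarrow$(1), I would simply invoke Lemma~\ref{lem:equiinv}: a $\Sigma$-measurable $\eta$ that is constant on every orbit of $\AA$ is automatically $\nI(\AA)$-measurable, and since $\eta=\xi$ $\gamma$-a.e., $\xi$ is $\conj{\nI(\AA)}$-measurable. The extra assumption that $\eta$ is constant on each quasi-atom is not needed here.

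For the main direction (1)$\Rightarrow$(2), I would first pick an $\nI(\AA)$-measurable representative $\xi'$ of the class of $\xi$, which by Lemma~\ref{lem:equiinv} is $\Sigma$-measurable and constant on each orbit. The only thing missing is that $\xi'$ may fail to be constant on a particular quasi-atom $A_i$. To cure this, I would show that for every $A_i$ with $\gamma(A_i)>0$ there is a constant $c_i\in\R$ with $\xi'=c_i$ $\gamma$-a.e.\ on $A_i$: for each $q\in\Q$ the set $\{\xi'>q\}\cap A_i$ lies in $\nI(\AA)$ and sits inside $A_i$, so by the quasi-atom property in $(S,\nI(\AA),\gamma)$ it is either $\gamma$-null or co-null in $A_i$. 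The value
\[
c_i := \sup\bigl\{q\in\Q : \gamma(A_i\setminus\{\xi'>q\})=0\bigr\}
\]
is then the sought $\gamma$-a.e.\ value of $\xi'$ on $A_i$ (pick $c_i$ arbitrarily on null $A_i$). I would then set
\[
\eta := \xi'\,\cf_{S\setminus\bigcup_{i\in I} A_i} + \sum_{i\in I} c_i\,\cf_{A_i},
\]
which is $\Sigma$-measurable because $I$ is countable, constant on each $A_i$ by construction, and equal to $\xi'$ (hence to $\xi$) $\gamma$-a.e.

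The point I expect to require the most care — and really the only place where the structural hypothesis is used — is verifying that $\eta$ remains constant on \emph{every} orbit. Here I would exploit $(A_i)\subseteq\nI(\AA)$: for $s\in A_i$ and $T\in\AA$, the invariance $T^{-1}(A_i)=A_i$ yields $Ts\in A_i$, so each orbit is either entirely contained in some $A_i$ (where $\eta\equiv c_i$) or entirely contained in the complement $S\setminus\bigcup_i A_i$ (where $\eta=\xi'$ is constant on orbits by Lemma~\ref{lem:equiinv}). In both cases $\eta$ is constant on the orbit, completing (2).
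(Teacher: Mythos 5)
Your proof is correct and follows the same route as the paper's: take an $\nI(\AA)$-measurable representative of $[\xi]$ and modify it on the quasi-atoms, which the paper's proof compresses into the single remark that the representative ``can be easily seen'' to admit such a modification. Your construction of the constants $c_i$ via the quasi-atom dichotomy applied to the level sets $\{\xi'>q\}\cap A_i$ in $(S,\nI(\AA),\gamma)$, together with the observation that each orbit lies entirely inside some $A_i$ or entirely in the complement of their union because the $A_i$ are $\AA$-invariant, is exactly the argument being alluded to there.
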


\begin{proof}
$(2)\Longrightarrow (1)$ Using Lemma \ref{lem:equiinv} we get that $\eta$ is $\nI(\AA)$-measurable, so that $\xi$ is 
$\conj{\nI(\AA})$-measurable.\\
$(1)\Longrightarrow (2)$ First we find an $\xi_0\in [\xi]$ that is $\nI(\AA)$-measurable.  It can be easily seen that 
$\xi_0$ can be modified to an $\nI(\AA)$-measurable random variable $\eta$ satisfying the 
claimed properties.
\end{proof}


\section{Some technical proofs}
\label{section:technical_proofs}

\begin{lemma}\label{lemma:composition_and_Tg}
Let $g\in\mpmd$, $F_1,\ldots,F_n \in L_2(\nF^X)$ and
$f\colon \R^n\to \R$ be continuous such that
$f(F_1,\ldots,F_n)\in L_2(\cF^X)$. Then
$T_g f(F_1,\ldots,F_n) = f(T_gF_1,\ldots,T_gF_n)$ a.s.
\end{lemma}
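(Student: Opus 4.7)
The plan is to reduce the statement to the case of simple functionals, where it holds by the very definition of $T_g$, and then extend in two stages: first to general $L_2$-random variables $F_i$ with bounded continuous $f$, and finally to unbounded continuous $f$ by truncation.

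First I would verify the identity in the concrete case $F_i\in\nH^{X,d}$ with $d\ge\deg(g)$ and $f\in\nC_b(\R^n)$. If $F_i = f_i(X_{I_1^d},\ldots,X_{I_{2^d}^d})$ with $f_i\in\nC_b(\R^{2^d})$, then $f(F_1,\ldots,F_n) = (f\circ(f_1,\ldots,f_n))(X_{I_1^d},\ldots,X_{I_{2^d}^d})\in \nH^{X,d}$, and evaluating both sides via the defining action of $T_g$ on $\nH^{X,d}$ gives the same expression $(f\circ(f_1,\ldots,f_n))(X_{g(I_1^d)},\ldots,X_{g(I_{2^d}^d)})$.

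Next I would approximate: using Lemma~\ref{lemma:HX0dense} choose $F_i^k\in\nH^{X,d_k}$, $d_k\ge\deg(g)$, with $F_i^k\to F_i$ in $L_2$. Since $T_g$ is an $L_2$-isometry, also $T_g F_i^k\to T_g F_i$ in $L_2$. Passing to a common subsequence we obtain a.s.\ convergence $(F_1^k,\ldots,F_n^k)\to(F_1,\ldots,F_n)$ and $(T_g F_1^k,\ldots,T_g F_n^k)\to(T_g F_1,\ldots,T_g F_n)$. For $f\in\nC_b(\R^n)$, continuity together with bounded convergence yields $f(F_1^k,\ldots,F_n^k)\to f(F_1,\ldots,F_n)$ in $L_2$ and $f(T_g F_1^k,\ldots,T_g F_n^k)\to f(T_g F_1,\ldots,T_g F_n)$ a.s. Applying $T_g$ to the former (which is continuous in $L_2$) and comparing with the simple-case identity pointwise in $k$ gives the identity for arbitrary $F_i\in L_2(\cF^X)$ and bounded continuous $f$. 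A useful byproduct is that $(T_g F_1,\ldots,T_g F_n)$ has the same joint law as $(F_1,\ldots,F_n)$, because this equality holds for each $k$ by the exchangeability of increments and passes to the limit in distribution.

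The remaining step is to drop the boundedness of $f$; this is the main subtlety, since we only assume $f(F_1,\ldots,F_n)\in L_2$. I would apply the truncation $\psi_L$ and set $f_L:=\psi_L\circ f\in \nC_b(\R^n)$. The previous step yields
\[ T_g\, f_L(F_1,\ldots,F_n) = f_L(T_g F_1,\ldots,T_g F_n)\quad\text{a.s.} \]
On the left, $f_L(F_1,\ldots,F_n)\to f(F_1,\ldots,F_n)$ pointwise and is dominated by $|f(F_1,\ldots,F_n)|\in L_2$, so by dominated convergence and $L_2$-continuity of $T_g$ the left side converges in $L_2$ to $T_g f(F_1,\ldots,F_n)$. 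On the right, the joint-law identity from the previous paragraph gives $|f(T_g F_1,\ldots,T_g F_n)|\in L_2$ as well, so $f_L(T_g F_1,\ldots,T_g F_n)\to f(T_g F_1,\ldots,T_g F_n)$ a.s.\ and in $L_2$ by dominated convergence. Both limits therefore coincide a.s., yielding the claim. The main obstacle is precisely this last integrability issue: without the implicit preservation of joint distributions by $T_g$ one could not guarantee $f(T_g F_1,\ldots,T_g F_n)\in L_2$, so it is essential to extract this symmetry from the $\nC_b$-stage before taking $L\to\infty$.
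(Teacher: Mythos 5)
Your proposal is correct and follows essentially the same route as the paper: reduce to $f\in\nC_b(\R^n)$ via the truncation $\psi_L$, approximate the $F_i$ by cylinder functionals from $\nH^X$ where the identity holds by the definition of $T_g$, extract an a.s.\ convergent subsequence, and pass to the limit in $L_2$. Your additional joint-law argument securing $f(T_gF_1,\ldots,T_gF_n)\in L_2$ is sound but not strictly needed: in the final truncation step one only requires a.s.\ convergence of $\psi_L(f(T_gF_1,\ldots,T_gF_n))$ on the right together with $L_2$-convergence (hence a.s.\ convergence along a subsequence) of $T_g\psi_L(f(F_1,\ldots,F_n))$ on the left, after which the integrability of the right-hand side follows from the identity itself.
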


\begin{proof}
As in the proof of Lemma \ref{lemma:permpoisson} it is enough to prove that 
\[
    T_g \psi_L (f(F_1,\ldots,F_n)) \\
  = \psi_L (f (T_g F_1,\ldots,T_g F_n) ) \as \]
so that we can assume that $f\in \nC_b(\R^n)$.
By Lemma \ref{lemma:HX0dense}, we find $\nH^X \ni F_{i,k}\to F_i$ in $L_2(\cF^X)$ as $k\to \infty$. By a diagonal argument, 
we find a sub-sequence $(k_l)_{l=1}^\infty$ such that, for $l\to \infty$,
$F_{i,k_l} \to F_i$ a.s.\ and
$T_g F_{i,k_l} \to T_g F_i$ a.s.\ for $i=1,\ldots,n$.
Therefore, as $l\to \infty$, 
\[ f(F_{1,k_l},\ldots,F_{n,k_l}) \to f(F_1,\ldots,F_n)  \sptext{1}{and}{1} 
   f(T_g F_{1,k_l},\ldots,T_g F_{n,k_l}) \to f(T_g F_1,\ldots,T_g F_n) \]
a.s.\ and therefore, by the boundedness of $f$, we have convergence in $L_2(\cF^X)$.
We conclude by
\begin{align*}
      T_g f(F_1,\ldots,F_n)
  &=  \lim_{l\to \infty} T_g f(F_{1,k_l},\ldots,F_{n,k_l}) \\
  &=  \lim_{l\to \infty} f(T_g F_{1,k_l},\ldots,T_g F_{n,k_l}) 
  =   f(T_g F_1,\ldots,T_g F_n),
\end{align*}
where the limits are taken in $L_2(\cF^X)$.
\end{proof}
\smallskip

\begin{proof}[Proof of Lemma \ref{lemma:permbrownian}]
From the L\'evy-It\^o decomposition \cite[Theorem 19.2]{sato:99} we know that there is a set 
$\Omega_0$ of measure one and a sequence $(\alpha_N)_{N=2}^\infty \subseteq \R$, such that for 
all $\omega\in \Omega_0$, $r\in [0,1]$, and $E_N:=(-N,-\frac{1}{N})\cup(\frac{1}{N} ,N)$, one has 
\[ 
      \s B_r(\omega)
   =  X_r(\omega)  - \lim_{\genfrac{}{}{0pt}{}{N \to \infty}{N\ge 2}} 
      \left [ \left (\int_{(0,r]\times E_N} x \dd N(s,x)\right )(\omega) - \alpha_N r
             \right ].
\]
Using the truncations $\psi_L$, $L\in \NN$, we get therefore
\equa
\s B_t & = & \lim_{L\to \infty} \psi_L \left ( X_t  - \lim_{\genfrac{}{}{0pt}{}{N \to \infty}{N\ge 2}} 
      \left [ \left (\int_{(0,t]\times E_N} x \dd N(s,x)\right )- \alpha_N t 
             \right ] \right ) \as, \\
\s B_{g((0,t])} & = & \lim_{L\to \infty} \psi_L \left ( X_{g((0,t])}  - \lim_{\genfrac{}{}{0pt}{}{N \to \infty}{N\ge 2}} 
      \left [ \left (\int_{g((0,t])\times E_N} x \dd N(s,x)\right )- \alpha_N t 
             \right ] \right ) \as,
\tion
where we assume that $g$ is represented by some fixed permutation of dyadic intervals and 
$B_{g((0,t])}$ and $X_{g((0,t])}$ are obtained by finite differences over these intervals in the canonical way. Moreover, 
the term $\alpha_N t$ in the second equation appears due to the fact that $g$ is measure preserving.
Therefore, it is sufficient to prove that
\begin{multline*} T_g \psi_L \left ( X_t  - \lim_{\genfrac{}{}{0pt}{}{N \to \infty}{N\ge 2}} 
      \left [ \left (\int_{(0,t]\times E_N} x \dd N(s,x)\right )- \alpha_N t 
             \right ] \right ) \\ = \psi_L \left ( X_{g((0,t])}  - \lim_{\genfrac{}{}{0pt}{}{N \to \infty}{N\ge 2}} 
      \left [ \left (\int_{g((0,t])\times E_N} x \dd N(s,x)\right )- \alpha_N t 
             \right ] \right ) \as
 \end{multline*}
Because of the almost sure convergence in $N\to\infty$ it is 
 sufficient to verify that
\begin{multline*} 
       T_g \psi_L \left ( X_t  - \int_{(0,t]\times E_N} x \dd N(s,x) + \alpha_N t \right ) \\ 
     = \psi_L \left ( X_{g((0,t])}  - \int_{g((0,t])\times E_N} x \dd N(s,x) + \alpha_N t  \right ) \as
\end{multline*}
for $N\ge 2$, or
\begin{multline*} 
     T_g \psi_L \left ( \psi_K(X_t)  - \int_{(0,t]\times E_N} x \dd N(s,x) + \alpha_N t \right ) \\ 
   = \psi_L \left ( \psi_K(X_{g((0,t])})  - \int_{g((0,t])\times E_N} x \dd N(s,x) + \alpha_N t \right ) \as
 \end{multline*}
 for $K,L\in \NN$. As the integral terms belong to $L_2(\cF^X)$, this follows from  
 Lemmas \ref{lemma:permpoisson} and \ref{lemma:composition_and_Tg}.
\end{proof}
\smallskip

\begin{proof}[Proof of Lemma \ref{lemma:borelstable}]
\hspace*{-.4em} A Borel measurable function $\f$ can be approximated by truncation by 
bounded Borel measurable functions $\f_L:= \psi_L(\f)$, $L\in\NN$,
and $\f_L(F_1,\ldots,F_n)\in \bH_\G$ implies $\f(F_1,\ldots,F_n)\in \bH_\G$ by monotone convergence and the completeness of $\bH_\G$
(which is easy to check as the operators $T_g\colon L_2(\nF^X)\to L_2(\nF^X)$ are isometries).
Assuming that $\f$ is bounded, we approximate $\f$ point-wise by simple functions $\f_k$ with $\|\f_k\|_\infty\leq \|\f\|_\infty$. 
It follows that $\f_k(F_1,\ldots,F_n)\arre \f(F_1,\ldots,F_n)$ in $L_2(\nF^X)$ by dominated convergence. 
Therefore, it is sufficient to check the statement for $\f=\cf_B$ where $B$ is a Borel set from $\R^n$.
Using the outer regularity of the law of $(F_1,\ldots,F_n)$ we can verify this in turn by using $\f\in\nC_b(\RR^n)$. But this case follows from Lemma~\ref{lemma:composition_and_Tg}.
\end{proof}

\end{appendix}

\paragraph{\bf Acknowledgement} We want to thank the referee and Christel Geiss for the valuable comments and 
suggestions that improved and completed this article.


\bibliographystyle{amsplain}

\end{document}